\newcommand{\Rmnum}[1]{\expandafter\@slowromancap\romannumeral #1@}
\newtheorem{theorem}{Theorem}[section]
\theoremstyle{definition}
\newtheorem{assumption}{Assumption}[section]
\newtheorem{lemma}{Lemma}[section]
\newtheorem{proposition}{Proposition}[section]
\newtheorem{definition}{Ddefinition}[section]
\newtheorem{example}{Example}[section]
\theoremstyle{remark}
\newtheorem{remark}{Remark}
\DeclareMathOperator*{\argmin}{arg\,min}
\numberwithin{equation}{section}
\begin{document}

\title{A quasi-Monte Carlo multiscale method for the wave propagation in random media}

\author{Panchi Li}
\address{Department of Mathematics, The University of Hong Kong, Hong Kong, China}
\email{lipch@hku.hk}

\author{Zhiwen Zhang$^{\ast}$}\thanks{*Corresponding author}
\address{Department of Mathematics, The University of Hong Kong, Hong Kong, P.R. China.  Materials Innovation Institute for Life Sciences and Energy (MILES), HKU-SIRI, Shenzhen, P.R. China. }
\email{zhangzw@hku.hk}

\graphicspath{{figures/}}


\date{\today}

\dedicatory{}
\keywords{uncertainty quantification, Helmholtz equation, random refractive index, quasi-Monte Carlo method, multiscale method}
\begin{abstract}
In this paper, we propose and analyze an accurate numerical approach to simulate the Helmholtz problem in a bounded region with a random refractive index, where the random refractive index is denoted using an infinite series parameterized by stochastic variables. To calculate the statistics of the solution numerically, we first truncate the parameterized model and adopt the quasi-Monte Carlo (qMC) method to generate stochastic variables. We develop a boundary-corrected multiscale method to discretize the truncated problem, which allows us to accurately resolve the Robin boundary condition with randomness. The proposed method exhibits superconvergence rates in the physical space (theoretical analysis suggests $\mathcal{O}(H^4)$ for $L^2$-error and $\mathcal{O}(H^2)$ for a defined $V$-error). Owing to the employment of the qMC method, it also exhibits almost the first-order convergence rate in the random space. We provide the wavenumber explicit convergence analysis and conduct numerical experiments to validate key features of the proposed method.
\end{abstract}
\maketitle

\section{Introduction}
\label{sec:introduction}

The Helmholtz equation is a fundamental partial differential equation that describes wave propagation arising in acoustics, electromagnetism, and quantum mechanics~\cite{colton1998inverse}. Efficiently solving the Helmholtz problem with large wavenumber is still among the most challenging tasks in modern scientific computing, because of the highly oscillatory nature and the pollution effect \cite{doi:10.1137/S0036142994269186,Dispersion_pollution1999Deraemaeker,IHLENBURG19959}. Recently, the understanding and quantification of various uncertainties due to the randomness in materials, for wave propagation have drawn many attentions, such as the problems with random refractive index \cite{feng2015efficient,feng2017efficient,ganesh2021quasi,Ma2023CAC}, the random wavenumber \cite{Pulch2024}, as well as the general random model \cite{graham2019helmholtz,graham2025quasi,1144755,MCDANIEL2020132491,8770094}.

In the case of random media, the refractive index can be expressed as random perturbations controlled by a small parameter $\epsilon$ \cite{feng2015efficient,feng2017efficient}. The stochastic solution is then approximated as a series in powers of $\epsilon^j, j = 0, 1, 2, \cdots$. Meanwhile, the coefficients in the series are approximated using the interior-penalty discontinuous Galerkin (IPDG) method owing to its good stability and the capability to reduce the pollution errors by tuning the penalty parameters~\cite{doi:10.1137/080737538,doi:10.1137/21M143827X}. The random refractive index can also be denoted by the infinite series with parameterizations of stochastic variables uniformly distributed in a bounded domain. Numerically, the resulting problem with a finite truncation is further discretized using the classical Galerkin method or the multiscale reduced method \cite{ganesh2021quasi,Ma2023CAC}. To estimate the statistics of the solution, both the traditional Monte-Carlo (MC) method and quasi-Monte Carlo (qMC) methods are widely employed. In contrast to the classical MC method, the qMC method uses carefully chosen points to produce an almost first-order convergence rate in the random space. The high-order convergence rate of qMC means fewer sampling points are required to reach a comparable error. For more information on the qMC method and its applications to diffusion problems, see, for example, \cite{Dick_Kuo_Sloan_2013,graham2015quasi,kuo2016application,doi:10.1137/110845537}.

The standard Galerkin method suffers from the severe dependence of the mesh size on the wavenumber, because of the well-known instability issue, the highly oscillatory of the solution, and the pollution effect. To obtain the accurate solution of the Helmholtz equation with constant or low-contrast coefficients, a number of approaches have been designed and analyzed, such as $hp$-finite elements~\cite{85557aa6-cec7-38fa-8b66-039de787274f,doi:10.1137/090776202}, discontinuous Galerkin methods~\cite{doi:10.1137/080737538,5fdb831a-08b0-3ff0-a756-5c4f22c7272d,doi:10.1137/090761057,https://doi.org/10.1002/nme.1575} and multiscale methods. In particular, the localized orthogonal decomposition (LOD) method has been proven to be effective in eliminating the pollution effect with a loose dependence between the wavenumber and mesh size~\cite{doi:10.1137/21M1414607,fda66b15-cbd8-3cbb-a160-13398a1c4325}. Thereafter, a novel localization strategy, named the super-LOD, is proposed to reduce the localization error \cite{doi:10.1137/21M1465950,hauck2023super}. Besides, there are also other multiscale methods such as the generalized multiscale finite element method \cite{doi:10.1137/22M1507802,fu2024edgemultiscalespacebased,doi:10.1137/19M1267180,doi:10.1137/21M1466748}, the heterogeneous multiscale method \cite{doi:10.1137/16M1108820}, and the multiscale hybrid-mixed method \cite{doi:10.1137/19M1255616}.  Multiscale methods have excellent capabilities to eliminate the pollution effect and deal with the discontinuous coefficients in a low-dimensional coarse space. Nevertheless, there is rarely related work to achieve an optimal approximation for the random Helmholtz problem by combining multiscale and qMC methods.

The main objectives of this paper are (i) to provide an accurate multiscale method to simulate the Helmholtz problem in random media; (ii) to carry out a rigorous error analysis for the proposed method. More precisely, we approximate the random refractive index using the infinite series parameterized by the stochastic variables. Numerically, the infinite-dimensional parameterized model is truncated and the qMC method is employed to generate stochastic variables. Since the Robin boundary condition also depends on the stochastic variables in a nonlinear form, we develop a multiscale method with a boundary corrector for the resulting problem for every realization of the random refractive index. We remark that the referred multiscale method is a LOD method, in which we construct the basis by solving a set of optimal problems.
When the Dirichlet boundary and Neumann boundary conditions are considered, the LOD method with boundary correctors has been proposed in \cite{doi:10.1137/130933198}. And a merged method of LOD and operator-adapted wavelets (gamblets) has been proposed \cite{doi:10.1137/21M1414607} to effectively solve the Helmholtz problem with various boundary conditions.
For the Helmholtz problem with the nontrivial Robin boundary condition, the solution shall belong to a complex-valued Sobolev space. Hence, similar to the LOD and super-LOD methods \cite{doi:10.1137/21M1465950,fda66b15-cbd8-3cbb-a160-13398a1c4325}, here the complex-valued multiscale basis is also constructed.

Therefore, the main contributions of this paper are an accurate numerical approach that combines the multiscale method and qMC methods, and a rigorous error analysis, especially for the wavenumber explicit estimate. In the framework of the LOD method, the pollution effect is naturally eliminated. Error estimates further show that the $L^2$-error can reach a fourth-order convergence rate, while a defined $V$-error (equality to $H^1$-norm but depends on the wavenumber) can reach a second-order convergence rate. Furthermore, owing to the qMC samples being used, it exhibits an almost first-order convergence rate in the random space. Some numerical experiments are carried out to validate the convergence. Especially for the 1D problem, the optimal $V$-error can reach a fourth-order convergence rate as well. Furthermore, we conduct numerical tests on heterogeneous media, demonstrating the superiority of boundary correction. The techniques developed in this paper can be used to construct accurate numerical schemes in other complex-valued problems.


The rest of this paper is organized as follows. In \Cref{sec:model-wavenumber-explicit-estimate}, we introduce the Helmholtz problem and derive a wavenumber explicit estimate. In \Cref{sec:msfem-phsical-space}, we propose the multiscale method with a boundary corrector for the parametric Helmholtz problem. In \Cref{sec:regularity-wrt-variables}, we estimate the regularity of the solution with respect to the stochastic variable. In \Cref{sec:dimension-truncation}, we present the truncation error and the qMC error. The overall numerical algorithm and the total error approximation are given in \Cref{sec:overall-method-analysis}. Numerical results are depicted in \Cref{sec:numerics}. And conclusions are drawn in \Cref{sec:conclusions}.

\section{The problem and wavenumber explicit estimate}\label{sec:model-wavenumber-explicit-estimate}
\subsection{Notation and preliminaries}
Define the inner products
\begin{equation*}
  (u, v) = \int_D u \bar{v} \mathrm{d}\mathbf{x}, \quad \langle u, v\rangle = \int_{\partial D} u \bar{v} \mathrm{d}s,
\end{equation*}
with $\bar{v}$ the conjugate of the complex number $v$. In the sequel, we use $L^2(D)$ and $L^2(\partial D)$ abbreviate the canonical $L^2(D; \mathds{C})$ and $L^2(\partial D; \mathds{C})$ spaces, respectively.
The induced $L^2$-norms are $\|u\|^2_{L^2(D)} = (u, u)$ and $\|u\|^2_{L^2(\partial D)} = \langle u, u\rangle$. Meanwhile, we define the $\kappa$-weighted norm
\begin{equation*}
  \|u\|^2_{V} = \|\nabla u\|^2_{L^2(D)} + \kappa^2 \|u\|^2_{L^2(D)}.
\end{equation*}
Throughout this paper, the notation $a \lesssim b$ denotes $a \leq Cb$ with the constant $C$ independent of the wavenumber $\kappa$, and the mesh sizes $H$ and $h$.

\subsection{Model problem}
In a bounded domain with a random refractive index and spatial heterogeneity, the wave propagation is modeled by the Helmholtz problem:
\begin{align}
  &-\Delta u - \kappa^2n(\mathbf{x}, \boldsymbol{\omega})u = f(\mathbf{x}), \quad &\mathbf{x} \in D, \; \boldsymbol{\omega} \in \Omega,
  \label{equ:Helmholtz-equation} \\
  &\nabla u \cdot \nu - i\kappa\sqrt{n(\mathbf{x}, \boldsymbol{\omega})}u = 0, \quad & \mathbf{x} \in \partial D, \; \boldsymbol{\omega} \in \Omega,
  \label{equ:robin-boundary}
\end{align}
where $D \subset \mathds{R}^d (d = 1,2,3)$ is a convex bounded polygonal domain with the boundary $\partial D$, $\nu$ is the unit outward normal vector on $\partial D$, $\kappa$ is the wavenumber and $n(\mathbf{x}, \boldsymbol{\omega})$ is the random refractive index.
For each $\mathbf{x} \in D$, the refractive index $n(\mathbf{x}, \cdot)$ defines a positive-valued random variable over $\Omega$.

Consider the random refractive index parameterized by an infinite-dimensional vector $\boldsymbol{\omega} = (\omega_1, \omega_2, \cdots)$ with the form
\begin{equation}
  n(\mathbf{x}, \boldsymbol{\omega}) = n_0(\mathbf{x}) + \sum_{j=1}^{\infty} \omega_j\psi_j(\mathbf{x}),
  \label{equ:refractive-index-infinite-series}
\end{equation}
in which we assume that the parameter $\boldsymbol{\omega}$ is uniformly distributed on
$$\Omega := \left[-\frac 12, \frac12\right]^{\mathds{N}}$$
with $\mathds{N}$ being a set of positive integers. The $\psi_j(\mathbf{x})$ are given functions and can belong to the
Karhunen-Lo\'eve (KL) eigensystem of a covariance operator in $L^2(D, \mathds{R})$.

The variational formulation for \eqref{equ:Helmholtz-equation}-\eqref{equ:robin-boundary} reads: for each $\boldsymbol{\omega} \in \Omega$ find $u(\mathbf{x}, \boldsymbol{\omega}) \in L^2(\Omega, H^1(D))$ such that
\begin{equation}
  a(\boldsymbol{\omega}; u, v) = F(v), \quad \forall v \in H^1(D),
  \label{equ:definition-weak-form}
\end{equation}
where the sesquilinear form $a(\boldsymbol{\omega}; \cdot, \cdot): H^1(D) \times H^1(D) \rightarrow \mathds{C}$ and conjugate form $F: H^1(D) \rightarrow \mathds{C}$ are defined as
\begin{gather}
  a(\boldsymbol{\omega}; u, v) = \int_D \nabla u \cdot \nabla \bar{v} \mathrm{d}\mathbf{x} - \kappa^2 \int_D n(\mathbf{x}, \boldsymbol{\omega}) u \bar{v} \mathrm{d}\mathbf{x} - i\kappa \int_{\partial D} \sqrt{n(\mathbf{x}, \boldsymbol{\omega})} u \bar{v} \mathrm{d}s,\label{equ:weak-form-a}\\
  F(v) = \int_D f \bar{v} \mathrm{d}\mathbf{x}, \quad \forall  u, v \in H^1(D).
\end{gather}

\subsection{Wavenumber explicit estimates}
The wavenumber explicit stability estimates usually play a pivotal role in the development of numerical methods. Before the formal estimates are given, we introduce some definitions and assumptions on the domain and the refractive index.

\begin{definition}
  We introduce two definitions for the problem.
  \begin{itemize}
    \item Star-shaped. The set $D \subset \mathds{R}^d$ is star-shaped with respect to the point $\mathbf{x}_D \in D$ if for all $\mathbf{x} \in D$ the line segment $[\mathbf{x}_D, \mathbf{x}] \subset D$.
    \item Denote $\alpha = \mathbf{x} - \mathbf{x}_D$. The refractive index satisfies $n(\mathbf{x}, \boldsymbol{\omega}) + \nabla n(\mathbf{x}, \boldsymbol{\omega}) \cdot \alpha \geq \mu > 0$ for all $\boldsymbol{\omega} \in \Omega$.
  \end{itemize}
\end{definition}
Assume the domain $D$ is star-shaped, i.e., there exists a point $\mathbf{x}_D \in D$ such that
\begin{equation}
  \beta_1 \leq \alpha^T \cdot \nu \leq \beta_2, \quad \forall\; \mathbf{x} \in D, \label{equ:condition-star-shape}
\end{equation}
where $\beta_1$ and $\beta_2$ are two finite positive constants. Meanwhile, for every stochastic variable $\boldsymbol{\omega} \in \Omega$, we assume
\begin{equation}
  0 < n_1 \leq n(\mathbf{x}, \boldsymbol{\omega}) \leq n_2, \quad \forall \mathbf{x} \in D.
  \label{equ:bounds-for-n}
\end{equation}
Besides, two equalities that will be repeatedly used in the following proof~\cite{cummings2006sharp,cummings2001analysisPhDThesis}:
\begin{gather*}
  2\Re(v\alpha\cdot\nabla \bar{v}) = \alpha \cdot \nabla |v|^2, \\
  2\Re(\nabla v \cdot \nabla (\alpha \cdot \nabla \bar{v})) = 2|\nabla v|^2 + \alpha \cdot \nabla |\nabla v|^2.
\end{gather*}
We now derive the stability for the solution of problem \eqref{equ:Helmholtz-equation}-\eqref{equ:robin-boundary}. The explicit dependence of the stability constants on the wavenumber $\kappa$ will be given.
\begin{proposition}
  Let $D$ be a star-shaped domain. For all $\boldsymbol{\omega} \in \Omega$, assume the refractive index admits the assumption \eqref{equ:bounds-for-n} and $\kappa > 0$. We have
  \begin{equation}
    \kappa \| u \|_{L^2(D)} \leq C (1 + \kappa^{-1} )\|f\|_{L^2(D)},
  \end{equation}
  and
  \begin{equation}
    \| u \|_V \leq C (1 + \kappa^{-1}) \|f\|_{L^2(D)},
  \end{equation}
  where $C$ is independent of $\kappa$, but depends on the domain, $n_j$, $\beta_j$ ($j=1,2$) and $\mu$.
  \label{prop:stability-estimate}
\end{proposition}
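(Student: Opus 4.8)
The plan is to establish both bounds via the Rellich--Morawetz multiplier technique, following the wavenumber-explicit estimates for impedance problems in \cite{cummings2006sharp,cummings2001analysisPhDThesis} but carried out for the $\boldsymbol\omega$-dependent refractive index. A preliminary remark: since $D$ is convex polygonal and $f\in L^2(D)$, for each $\boldsymbol\omega$ the solution satisfies $u(\cdot,\boldsymbol\omega)\in H^2(D)$, so the test function $\alpha\cdot\nabla u$ is admissible and the two pointwise identities quoted in the excerpt may be integrated over $D$ (a density argument handles the corners of $\partial D$). First I would test \eqref{equ:definition-weak-form} with $v=u$: separating real and imaginary parts gives $\|\nabla u\|_{L^2(D)}^2=\kappa^2(nu,u)+\Re F(u)$ and $\kappa\langle\sqrt{n}\,u,u\rangle=-\Im F(u)$, and since $\sqrt n\ge\sqrt{n_1}$ the second identity yields the boundary estimate $\kappa\|u\|_{L^2(\partial D)}^2\lesssim\|f\|_{L^2(D)}\|u\|_{L^2(D)}$, which will later absorb every boundary mass term. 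Then I would test with $v=\alpha\cdot\nabla u$, keep the real part, and use the two identities together with the divergence theorem to rewrite $\Re\int_D\nabla u\cdot\nabla\overline{\alpha\cdot\nabla u}$ and $-\kappa^2\Re\int_D n u\,\overline{\alpha\cdot\nabla u}$; on $\partial D$ I decompose $\nabla u=(\partial_\nu u)\nu+\nabla_T u$ and insert the Robin condition $\partial_\nu u=i\kappa\sqrt{n}\,u$ into each boundary term. Adding $\tfrac{d-1}{2}$ times the real part of the $v=u$ identity is arranged so that the coefficients of $\|\nabla u\|_{L^2(D)}^2$ and of $\kappa^2(nu,u)$ both become $\tfrac12$, and combining the second with the hypothesis $n+\nabla n\cdot\alpha\ge\mu$ produces $\tfrac{\mu}{2}\kappa^2\|u\|_{L^2(D)}^2$ on the left. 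The result is an inequality of the shape
\begin{equation*}
  \tfrac12\|\nabla u\|_{L^2(D)}^2+\tfrac{\mu}{2}\kappa^2\|u\|_{L^2(D)}^2+\tfrac12\!\int_{\partial D}(\alpha\cdot\nu)\,|\nabla_T u|^2\,\mathrm{d}s
  \;\lesssim\;\kappa^2\|u\|_{L^2(\partial D)}^2+\kappa\Big|\!\int_{\partial D}\!\sqrt{n}\,(\alpha_T\!\cdot\!\nabla_T u)\,\bar u\,\mathrm{d}s\Big|+\|f\|_{L^2(D)}\big(\|\nabla u\|_{L^2(D)}+\|u\|_{L^2(D)}\big).
\end{equation*}

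At this point star-shapedness enters decisively: because $\beta_1\le\alpha\cdot\nu$, the positive tangential term on the left absorbs (by Young's inequality) the tangential cross term on the right, at the cost of an extra multiple of $\kappa^2\|u\|_{L^2(\partial D)}^2$. Every surviving boundary mass term, written as $\kappa^2\|u\|_{L^2(\partial D)}^2=\kappa\cdot\bigl(\kappa\|u\|_{L^2(\partial D)}^2\bigr)$, is replaced, using the bound from the first test function, by $\|f\|_{L^2(D)}\,\kappa\|u\|_{L^2(D)}$. Finally Young's inequality, in the forms $\|f\|\,\|\nabla u\|\le\delta\|\nabla u\|^2+C_\delta\|f\|^2$ and $\|f\|\,\kappa\|u\|\le\delta\kappa^2\|u\|^2+C_\delta\|f\|^2$, absorbs the remaining right-hand side into the left, giving $\|\nabla u\|_{L^2(D)}^2+\kappa^2\|u\|_{L^2(D)}^2\lesssim(1+\kappa^{-2})\|f\|_{L^2(D)}^2$; taking square roots and using $\sqrt{1+\kappa^{-2}}\le1+\kappa^{-1}$ yields the $V$-estimate, and its $\kappa^2\|u\|_{L^2(D)}^2$ summand gives the first estimate. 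The constant depends on $\operatorname{diam}D$ and on the trace and $H^2$-regularity constants of $D$, on $n_1,n_2$, on $\beta_1,\beta_2$, and on $\mu$, as claimed.

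I expect the main obstacle to be the bookkeeping in the Rellich--Morawetz test function $v=\alpha\cdot\nabla u$: the boundary integrals it generates must be recombined, after inserting the Robin condition, so that the normal contributions carry a sign compatible with $\beta_1\le\alpha\cdot\nu\le\beta_2$ and only a single controllable tangential term survives; keeping track of these cancellations — and rigorously justifying the Rellich identities on a polygonal domain through $H^2$ regularity — is the delicate part, whereas the two test-function applications and the final absorptions are routine.
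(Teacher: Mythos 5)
Your proposal is correct and follows essentially the same route as the paper: test with $v=u$ to extract the boundary estimate from the imaginary part, then apply the Rellich--Morawetz multiplier $v=\alpha\cdot\nabla u$ together with the two quoted identities, use star-shapedness and the condition $n+\nabla n\cdot\alpha\geq\mu$ to produce the coercive terms $\tfrac{\mu}{2}\kappa^2\|u\|_{L^2(D)}^2$ and $\tfrac{\beta_1}{2}\|\nabla u\|_{L^2(\partial D)}^2$, and absorb the remaining boundary and volume terms by Young's inequality with carefully tuned parameters. Your tangential/normal splitting of $\nabla u$ on $\partial D$ is a minor cosmetic variation on the paper's direct absorption of the full boundary gradient norm via the $\epsilon_2$-Cauchy inequality, and does not change the substance of the argument.
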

\begin{proof}
  Let $v = u$ in the weak form \eqref{equ:definition-weak-form}, and we obtain
  \begin{gather}
    \kappa \langle \sqrt{n} u, u \rangle \leq \left| F(u) \right|, \\
    (\nabla u, \nabla u) - \kappa^2(n u, u) \leq \left| F(u) \right|.
  \end{gather}
  We temporarily use $n$ denote $n(\mathbf{x}, \boldsymbol{\omega})$. Employing Cauchy's inequality yields
  \begin{equation*}
    \left| F(u) \right| \leq \|f\|_{L^2(D)} \|u\|_{L^2(D)} \leq \frac{\epsilon_0}{2}\|f\|^2_{L^2(D)} + \frac 1{2\epsilon_0}\|u\|^2_{L^2(D)}.
  \end{equation*}
  Owing to $0 < n_1 \leq n \leq n_2$, the boundary estimate satisfies
  \begin{equation*}
    \| u \|^2_{L^2(\partial D)} \leq \frac{1}{\kappa\sqrt{n_1}} \|f\|_{L^2(D)} \|u\|_{L^2(D)}.
  \end{equation*}
  Let $v = \alpha \cdot \nabla u$ in \eqref{equ:definition-weak-form}. Due to
  \begin{align*}
    \Re \int_D \nabla u \cdot \nabla(\alpha \cdot \nabla \bar{u}) \mathrm{d}\mathbf{x} &= \int_D |\nabla u|^2 \mathrm{d}\mathbf{x} + \frac 12\int_D \alpha\cdot\nabla |\nabla u|^2 \mathrm{d}\mathbf{x} \\
    &= \left( 1 - \frac{d}{2} \right)\int_D |\nabla u|^2 \mathrm{d}\mathbf{x} + \frac 12 \int_{\partial D} (\alpha \cdot \nu)|\nabla u|^2 \mathrm{d}s,
  \end{align*}
  and
  \begin{align*}
    & \Re \int_{D} n u (\alpha \cdot \nabla \bar{u}) \mathrm{d}\mathbf{x}
    = \frac 12\int_{\partial D} n (\alpha \cdot \nu) |u|^2 \mathrm{d}s - \frac 12\int_D \nabla n \cdot \alpha |u|^2 \mathrm{d}\mathbf{x} - \frac {d}{2}\int_D n |u|^2 \mathrm{d}\mathbf{x},
  \end{align*}
  we obtain
  \begin{align}
    \Re\int_D f(\alpha\cdot \nabla\bar{u}) \mathrm{d}\mathbf{x} = \left(1-\frac{d}{2}\right)\|\nabla u\|_{L^2(D)}^2 + \frac {d\kappa^2}{2}\int_D n |u|^2 \mathrm{d}\mathbf{x} + \frac {\kappa^2}2\int_D \nabla n \cdot \alpha |u|^2 \mathrm{d}\mathbf{x} \nonumber\\
    -\Re\int_{\partial D} i\kappa \sqrt{n} u \alpha\cdot\nabla \bar{u}\mathrm{d}s + \frac 12\int_{\partial D} (\alpha \cdot \nu) |\nabla u|^2 \mathrm{d}s
    - \frac {\kappa^2}2\int_{\partial D} n (\alpha \cdot \nu) |u|^2 \mathrm{d}s .
    \label{equ:integral-for-L2-H1}
  \end{align}
  And then we arrive at
  \begin{align*}
    &\frac{\mu\kappa^2}{2}\| u \|_{L^2(D)} + \frac{\beta_1}{2} \|\nabla u\|_{L^2(\partial D)} \\
    \leq &\frac {\kappa^2}2\int_D \nabla n(\mathbf{x}, \boldsymbol{\omega}) \cdot \alpha |u|^2 \mathrm{d}\mathbf{x} + \frac {\kappa^2}{2}\int_D n(\mathbf{x}, \boldsymbol{\omega}) |u|^2 \mathrm{d}\mathbf{x} + \frac 12\int_{\partial D} (\alpha \cdot \nu) |\nabla u|^2 \mathrm{d}s \\
    = &\Re\int_D f(\alpha\cdot \nabla\bar{u}) \mathrm{d}\mathbf{x} + \Re\int_{\partial D} i\kappa \sqrt{n(\mathbf{x}, \boldsymbol{\omega})} u \alpha\cdot\nabla \bar{u}\mathrm{d}s + \frac {\kappa^2}2\int_{\partial D} n(\mathbf{x}, \boldsymbol{\omega}) (\alpha \cdot \nu) |u|^2 \mathrm{d}s \\
    & + \left(\frac{d}{2}-1\right) \left( \|\nabla u\|_{L^2(D)}^2 - \kappa^2(n(\mathbf{x}, \boldsymbol{\omega})u, u)_{L^2(D)} \right) \\
     \leq & \frac{M}{2\epsilon_1}\|\nabla u\|^2_{L^2(D)} + \frac{M\kappa\sqrt{n_2}}{2\epsilon_2}\|\nabla u\|^2_{L^2(\partial D)} + \frac{d-2}{4\epsilon_0}\|u\|^2_{L^2(D)} \\
     &+ \left( M\kappa\sqrt{n_2} \frac{\epsilon_2}{2} + \frac{\kappa^2\beta_2n_2}{2} \right)\|u\|^2_{L^2(\partial D)} + \left( \frac{M\epsilon_1}{2} + \frac{(d-2)\epsilon_0}{4} \right) \|f\|^2_{L^2(D)},
  \end{align*}
  in which we use the Cauchy inequalities with the parameters $\epsilon_1$ and $\epsilon_2$.
  Since
  \begin{equation*}
    \|\nabla u\|_{L^2(D)}^2 \leq \left(\kappa^2n_2 + \frac 1{2\epsilon_0}\right)\|u\|_{L^2(D)} + \frac{\epsilon_0}{2}\|f\|^2_{L^2(D)},
  \end{equation*}
  we choose $\epsilon_2 = \frac{2M\sqrt{n_2}\kappa}{\beta_1}$ and get
  \begin{align*}
    \frac{\mu\kappa^2}{2}\| u \|_{L^2(D)} \leq & \left\{ \frac{Mn_2\kappa^2}{\epsilon_1} + \frac{d-2}{2\epsilon_0} + \frac{n_2\kappa}{\sqrt{n_1}}\left( \frac{2M^2}{\beta_1} + \frac{\beta_2}{2} \right) \frac 1{\epsilon_3} \right\} \|u\|^2_{L^2(D)}\\
    &+ \left\{\frac{M\epsilon_0}{\epsilon_1} + \frac{n_2\kappa\epsilon_3}{\sqrt{n_1}}\left( \frac{2M^2}{\beta_1} + \frac{\beta_2}{2} \right) + M\epsilon_1 + \frac{d-2}{2}\epsilon_0 \right\} \|f\|^2_{L^2(D)}.
  \end{align*}
  Choose $\epsilon_0 = \frac{d-2}{2C_0\mu\kappa^2}$, $\epsilon_1 = \frac{Mn_2}{C_1\mu}$, $\epsilon_3 = \frac{n_2}{C_3\sqrt{n_1}\mu\kappa}\left( \frac{2M^2}{\beta_1} + \frac{\beta_2}{2} \right)$, and we arrive at
  \begin{multline*}
    \frac{\mu\kappa^2}{2}\| u \|^2_{L^2(D)} \leq (C_0 + C_1 + C_3)\mu\kappa^2 \|u\|^2_{L^2(D)} \\
    + \left( \frac{(d-2)C_1}{2n_2C_0\kappa^2} + \frac{n_2^2}{C_3n_1\mu}\left( \frac{2M^2}{\beta_1} + \frac{\beta_2}{2} \right)^2 + \frac{M^2n_2}{C_1\mu} + \frac{(d-2)^2}{4C_0\mu\kappa^2} \right) \|f\|^2_{L^2(D)}.
  \end{multline*}
  Let $C_0 + C_1 + C_3 < \frac 12$, and then there exists a constant $C$ independent of $\kappa$ such that
  \begin{equation*}
    \kappa^2 \| u \|^2_{L^2(D)} \leq C (1 + \kappa^{-2} )\|f\|^2_{L^2(D)},
  \end{equation*}
  or equivalently, $\kappa \| u \|_{L^2(D)} \leq C (1 + \kappa^{-1} )\|f\|_{L^2(D)}$.
  Then, we obtain
  \begin{align*}
    \|\nabla u\|^2_{L^2(D)} + \kappa^2\| u \|^2_{L^2(D)} \leq &\kappa^2 (n_2 + 1)\| u \|^2_{L^2(D)} + \frac 1{\epsilon_0}\| u \|^2_{L^2(D)} + \epsilon_0 \|f\|^2_{L^2(D)} \\
    \leq &C(n_2 + 2)(1 + \kappa^{-2} )\|f\|^2_{L^2(D)} + \kappa^{-2}\|f\|^2_{L^2(D)},
  \end{align*}
  which infers
  \begin{equation*}
    \|u\|_V = \left(\|\nabla u\|^2_{L^2(D)} + \kappa^2\| u \|^2_{L^2(D)}\right)^{\frac 12} \leq C(1 + \kappa^{-1})\|f\|_{L^2(D)}.
  \end{equation*}
  This completes the proof.
\end{proof}

\begin{remark}
    Once the estimate $\|u\|_{L^2(D)}$ is given, we also have
    \begin{equation}
        \|u\|^2_{L^2(\partial D)} \leq C\kappa^{-1}(1 + \kappa^{-2})\|f\|^2_{L^2(D)}.
        \label{equ:bound-u-L2-boundary}
    \end{equation}
    The estimate $\|u\|_{L^2(D)}$ depends on $\|u\|_{L^2(\partial D)}$ although it is not required explicitly. Due to the $\kappa$ dependent coefficient always present, this term also needs to be treated carefully.
\end{remark}

We next declare the well-posedness of the problem.
\begin{lemma}
  Let $\boldsymbol{\omega} \in \Omega$, and the sesquilinear of \eqref{equ:weak-form-a} satisfies
  \begin{equation}
    \inf_{0 \neq u \in H^1(D)} \sup_{0 \neq w \in H^1(D)} \frac{\Re a(u, w)}{\|u\|_V\|w\|_V} \geq \frac{1}{C_{s}(1 + \kappa)},
    \label{equ:coercive-Helmholtz}
  \end{equation}
  where $C_s$ is independent of $\kappa$.
\end{lemma}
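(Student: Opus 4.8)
The plan is a standard duality argument: for a fixed $u$ I construct a single test function $w=w(u)$ that realizes the supremum in \eqref{equ:coercive-Helmholtz} up to the factor $1+\kappa$, by adding to $u$ the solution of a suitable dual problem and invoking the $\kappa$-explicit a priori bound of Proposition~\ref{prop:stability-estimate}. First I record the G\r{a}rding-type identity: since $\langle\sqrt{n}\,u,u\rangle=\int_{\partial D}\sqrt{n}\,|u|^2\,\mathrm{d}s$ is real, the impedance term of \eqref{equ:weak-form-a} is purely imaginary on the diagonal, so for every $u\in H^1(D)$,
\[
\Re a(\boldsymbol{\omega};u,u)=\|\nabla u\|_{L^2(D)}^2-\kappa^2(n\,u,u)=\|u\|_V^2-\kappa^2\big((1+n)u,u\big),
\]
where $((1+n)u,u)=\int_D(1+n(\mathbf{x},\boldsymbol{\omega}))|u|^2\,\mathrm{d}\mathbf{x}\ge 0$ is real. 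The term $-\kappa^2((1+n)u,u)$ is the one that must be compensated.

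Next I introduce the dual problem with data proportional to $u$: find $z=z(u)\in H^1(D)$ such that
\[
a(\boldsymbol{\omega};v,z)=\ell(v):=2\kappa^2\!\int_D(1+n(\mathbf{x},\boldsymbol{\omega}))\,v\,\bar{u}\,\mathrm{d}\mathbf{x}\qquad\forall v\in H^1(D).
\]
The sesquilinear form $(v,z)\mapsto a(\boldsymbol{\omega};v,z)$ underlying this problem has the structure of \eqref{equ:weak-form-a}; after complex conjugation it is a Helmholtz problem of the same type, the only change being the sign in front of $i\kappa$ in the impedance term, which the proof of Proposition~\ref{prop:stability-estimate} does not feel (only $|u|$ on $\partial D$ and real-part estimates enter there). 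Hence the dual problem is well posed, and since $|\ell(v)|\le 2(1+n_2)\kappa^2\|u\|_{L^2(D)}\|v\|_{L^2(D)}$, its solution obeys $\|z\|_V\lesssim(1+\kappa^{-1})\,\kappa^2\|u\|_{L^2(D)}\le(1+\kappa)\|u\|_V$, using $\kappa\|u\|_{L^2(D)}\le\|u\|_V$.

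Now set $w=u+z$. Testing the dual equation with $v=u$ gives $a(\boldsymbol{\omega};u,z)=2\kappa^2\int_D(1+n)|u|^2\,\mathrm{d}\mathbf{x}\ge 0$, which is real, so together with the identity above,
\[
\Re a(\boldsymbol{\omega};u,w)=\|u\|_V^2-\kappa^2\big((1+n)u,u\big)+2\kappa^2\big((1+n)u,u\big)=\|u\|_V^2+\kappa^2\big((1+n)u,u\big)\ge\|u\|_V^2,
\]
while $\|w\|_V\le\|u\|_V+\|z\|_V\le C_s(1+\kappa)\|u\|_V$ with $C_s$ independent of $\kappa$. Therefore, for every $u\neq0$,
\[
\sup_{0\neq w'\in H^1(D)}\frac{\Re a(\boldsymbol{\omega};u,w')}{\|u\|_V\|w'\|_V}\ \ge\ \frac{\Re a(\boldsymbol{\omega};u,w)}{\|u\|_V\|w\|_V}\ \ge\ \frac{1}{C_s(1+\kappa)},
\]
and taking the infimum over $u$ yields \eqref{equ:coercive-Helmholtz}.

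The main obstacle is justifying the well-posedness of the dual problem that defines $z$. Since $\kappa^2(n\,\cdot,\cdot)$ is a compact perturbation of $\|\cdot\|_V^2$ on $H^1(D)$, the dual form satisfies the same G\r{a}rding inequality as $a(\boldsymbol{\omega};\cdot,\cdot)$ and the Fredholm alternative applies; uniqueness for the homogeneous dual problem then follows by taking imaginary parts, which forces the trace $z|_{\partial D}$ and, through the boundary condition, also $\nabla z\cdot\nu|_{\partial D}$ to vanish, so $z$ has zero Cauchy data and unique continuation (valid under the regularity assumed for $n_0$ and the $\psi_j$ in \eqref{equ:refractive-index-infinite-series}) gives $z\equiv 0$. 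Once this is in place, transferring Proposition~\ref{prop:stability-estimate} to the dual form with the stated $\kappa$ dependence, and the conjugation bookkeeping in $\ell$, are routine.
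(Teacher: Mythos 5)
Your proof is correct and follows essentially the same route as the paper: both construct the test function $w=u+z$ with $z$ solving an adjoint problem whose data is $2\kappa^2$ times a weighted copy of $u$, bound $\|z\|_V$ via Proposition~\ref{prop:stability-estimate}, and conclude the inf-sup bound with constant $C_s(1+\kappa)$. Your only deviations are cosmetic (weight $1+n$ instead of $n$, which lets you avoid invoking $n\ge n_1$ in the final lower bound) and a more careful justification of the adjoint problem's well-posedness, which the paper takes for granted.
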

\begin{proof}
  Given $u \in H^1(D)$, define $z \in H^1(D)$ as the solution of
  \begin{equation*}
    a(v, z) = 2\kappa^2(n(\mathbf{x}, \boldsymbol{\omega})v, u), \quad \text{for all } v \in H^1(D).
  \end{equation*}
  \cref{prop:stability-estimate} implies $\|z\|^2_V \leq C\kappa^4 \|u\|^2_{L^2(D)}$ (replace $f$ by $2\kappa^2n(\boldsymbol{\omega}, \mathbf{x})u$ in \eqref{equ:definition-weak-form}) and $w = u + z$ satisfies
  \begin{equation*}
    \Re a(u, w) = \Re a(u, u) + \Re a(u, z) = \|\nabla u\|^2_{L^2(D)} + \kappa^2(n u, u).
  \end{equation*}
  Meanwhile, since
  \begin{equation*}
    \|w\|^2_V \leq \|u\|^2_V + \|z\|^2_V \leq \|u\|^2_V + C\kappa^4 \|u\|^2_{L^2(D)} \leq C(1 + \kappa^2) \|u\|^2_V,
  \end{equation*}
  or equivalently, $\|w\|_V \leq C(1 + \kappa) \|u\|_V$. We therefore have
  \begin{equation*}
    \Re a(u, w) = \|\nabla u\|^2_{L^2(D)} + \kappa^2(n u, u) \geq C \|u\|^2_V \geq C_s (1 + \kappa)^{-1}\|u\|_V \|w\|_V,
  \end{equation*}
  which concludes the proof of \eqref{equ:coercive-Helmholtz}.
\end{proof}

\section{Construction of the multiscale basis}
\label{sec:msfem-phsical-space}
\subsection{Two level finite element meshes and quasi-interpolation operator}
This part recalls briefly the notions of finite element meshes and patches, standard finite element spaces and the corresponding quasi-interpolation operator for the discretization.

Let $\mathcal{T}_H$ denote a regular finite element mesh of $D$ with the mesh size $H$. The first-order conforming finite element space on mesh $\mathcal{T}_H$ is given by
\begin{equation*}
  V_H = \{v \in H^1(D)\; |\; \forall T \in \mathcal{T}_H, v|_T \text{ is a polynomial of total degree } \leq 1\}.
\end{equation*}
Denote $\mathcal{N}_H$ the set of all vertices of $\mathcal{T}_H$, and $|\mathcal{N}_H| = N_H$. At the node $\mathbf{x}_j \in \mathcal{N}_H$, the nodal basis function $\phi_j^H(\mathbf{x}) \in V_H$ is determined by $\phi^H_j(\mathbf{x}_k) = \delta_{jk}$. In many practices, the multiscale basis cannot be expressed explicitly, and hence it is approximated using the finite element basis on a refined mesh $\mathcal{T}_h$ with the mesh size $h (< H)$.


For numerical analysis, we introduce a weighted Cl\'ement-type quasi-interpolation operator $\mathcal{I}_H$: $H^1(D) \rightarrow V_H$  \cite{doi:10.1137/S003614299732334X}. Given $v \in H^1(D)$, its definition reads as
\begin{equation}
  \mathcal{I}_H(v) = \sum_{j=1}^{N_H} \frac{(v, \phi_j^H)}{(1, \phi_j^H)}\phi_j^H(\mathbf{x}).
\end{equation}
The interpolation operator $\mathcal{I}_H$ is local stable and the local approximation satisfies for all $v \in H^1(D)$ and $T \in \mathcal{T}_H$
\begin{equation*}
  H_T^{-1}\|v - \mathcal{I}_H(v)\|_{L^2(T)} + \|\nabla (v - \mathcal{I}_H(v))\|_{L^2(T)} \leq C_{\mathcal{I}_H} \|\nabla v\|_{L^2(D_1)}.
\end{equation*}
In this work, the approximation error around boundaries is sufficiently considered. Hence, we refer to the global approximation of the interpolation operator $\mathcal{I}_H$~\cite{doi:10.1137/21M1464154,ern:hal-03226049}: for all $v \in H^{k+1}(D)$
\begin{equation}
  \|v - \mathcal{I}_H(v)\|_{L^2(D)} + H\|\nabla(v - \mathcal{I}_H(v))\|_{L^2(D)} + H^{\frac 12}\|v - \mathcal{I}_H(v)\|_{L^2(\partial D)} \leq C H^{k+1}|v|_{H^{k+1}(D)},
  \label{equ:approximation-interpolation-operator}
\end{equation}
where $|v|_{H^{k+1}(D)} = \|\nabla^{k+1} v\|_{L^2(D)}$.

\subsection{Localized multiscale basis functions}
To resolve the highly oscillatory waves, there is a necessary condition of the dependence between the mesh width $H$ and the wavenumber $\kappa$.
\begin{assumption}
  Given the wave number $\kappa$, we assume that the mesh size $H$ satisfies $H \kappa \lesssim 1$.
  \label{assump:mesh-size-wavenumber}
\end{assumption}

The solution of the Helmholtz problem \eqref{equ:Helmholtz-equation}-\eqref{equ:robin-boundary} is complex-valued because of the nontrivial Robin boundary condition. This implies that the operator-adaptive basis functions are complex-valued as well, ensuring that the multiscale method achieves optimal approximation, similar to \cite{doi:10.1137/21M1465950,doi:10.1137/21M1414607}. Here we present an approach to construct the complex-valued basis using the optimal approach developed in \cite{Hou2017}.

\subsubsection{Global multiscale basis}
For any given $\boldsymbol{\omega}$, denote
$$a_0(\boldsymbol{\omega}; u, v) = \int_D\nabla u \cdot \nabla \bar{v} \mathrm{d}\mathbf{x} - \kappa^2 \int_D n(\mathbf{x}, \boldsymbol{\omega})u \bar{v} \mathrm{d}\mathbf{x},$$
and $a(\boldsymbol{\omega}; u, v) = a_0(\boldsymbol{\omega}; u, v) - i\kappa \langle \sqrt{n(\mathbf{x}, \boldsymbol{\omega})} u, v \rangle$.
Let $u = u_R + iu_I$, and it holds
\begin{equation*}
  \nabla u_R \cdot \nu = -\kappa\sqrt{n(\mathbf{x}, \boldsymbol{\omega})}u_I, \quad \nabla u_I \cdot \nu = \kappa\sqrt{n(\mathbf{x}, \boldsymbol{\omega})}u_R, \quad \forall \mathbf{x} \in \partial D,
\end{equation*}
which indicates that
\begin{equation*}
  \Re (\nabla u \cdot \nu) {u} = -2\kappa\sqrt{n(\mathbf{x}, \boldsymbol{\omega})}u_Ru_I, \quad  \Im (\nabla u \cdot \nu) {u} = \kappa\sqrt{n(\mathbf{x}, \boldsymbol{\omega})} (u_R^2 - u_I^2).
\end{equation*}
Numerical tests demonstrate that better results can be provided when the second term is considered in the construction of the multiscale basis. More precisely, we define a new functional
\begin{equation*}
  \hat{a}(\boldsymbol{\omega}; u, v) = a_0(\boldsymbol{\omega}; u, v) + \kappa \int_{\partial D}\sqrt{n(\mathbf{x}, \boldsymbol{\omega})} (\Re{u}\Re{v} - \Im{u}\Im{v}) \mathrm{d}s.
\end{equation*}
Denote the operator-adaptive basis function $\phi = \phi_R + i\phi_I$. We rewrite it into $\Phi = (\phi_R, \phi_I)^T$ and then the multiscale basis function at $\mathbf{x}_j$ solves the optimal problem:
\begin{equation}
  \begin{aligned}
    &\argmin_{\Phi\in (H^1(D; \mathds{R}))^2} \hat{a}(\Phi, \Phi), \\
    &\mathrm{s.t.} \quad (\Phi, \phi_k^H) = \alpha_j\delta_{jk}(1, 1)^T, \quad k = 1, \cdots, N_H,
  \end{aligned}
  \label{equ:optimal-problems-global}
\end{equation}
where $\alpha_j$ are determined by the weighted Cl\'ement-type quasi-interpolation operator as in \cite{li2025efficientfiniteelementmethods,lizhang2025model}. Note that here
\begin{equation*}
  \hat{a}(\boldsymbol{\omega}; \Phi, \Phi) = \kappa\int_{\partial D} \sqrt{n(\mathbf{x}, \boldsymbol{\omega})}(\phi_R^2 - \phi_I^2) \mathrm{d}s + \int_D |\nabla \Phi|^2 \mathrm{d}\mathbf{x} - \kappa^2\int_D n(\mathbf{x}, \boldsymbol{\omega}) |\Phi|^2 \mathrm{d}\mathbf{x}.
\end{equation*}
Solving the optimal problems \eqref{equ:optimal-problems-global} for $j = 1, \cdots, N_H$ yields the global multiscale basis $\Psi_H = \mathrm{span} \{\Phi_1, \cdots, \Phi_{N_H}\}$. Hence, $\hat{a}(\cdot, \cdot)$ defines an orthogonal decomposition of $H^1(D) = \Psi_H \oplus W$, i.e.,
\begin{equation*}
  \hat{a}(\boldsymbol{\omega}; v_H, w) = 0, \quad \forall \boldsymbol{\omega} \in \Omega, \; v_H \in \Psi_H, \; w \in W.
\end{equation*}
Now we can solve the numerical solution $u_H$ that satisfies
\begin{equation}
  a(\boldsymbol{\omega}; u_H, v_H) = f(v_H), \quad \forall v_H \in \Psi_H.
  \label{equ:discrete-weak-from-msfem}
\end{equation}

\begin{lemma}
  Let \cref{assump:mesh-size-wavenumber} hold. Then we have the inf-sup condition
  \begin{equation*}
    \inf_{0 \neq u_H \in \Psi_H} \sup_{0 \neq w_H \in \Psi_H} \frac{\Re a(u_H, w_H)}{\|u_H\|_V\|w_H\|_V} \geq \frac{1}{C_{s}(1 + \kappa)}.
  \end{equation*}
\end{lemma}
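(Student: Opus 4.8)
The plan is to lift the continuous inf-sup estimate \eqref{equ:coercive-Helmholtz} to the multiscale space $\Psi_H$ by mimicking its proof: for a given $u_H\in\Psi_H$, reuse the adjoint-based competitor of that proof, replace it by its $\Psi_H$-component, and show that the resulting defect is small under \cref{assump:mesh-size-wavenumber}. Concretely, fix $0\neq u_H\in\Psi_H$ and let $z\in H^1(D)$ solve the adjoint problem $a(\boldsymbol\omega;v,z)=2\kappa^2(n(\mathbf x,\boldsymbol\omega)v,u_H)$ for all $v\in H^1(D)$. \cref{prop:stability-estimate}, applied with $f$ replaced by $2\kappa^2 nu_H$, gives $\|z\|_V\lesssim\kappa^2\|u_H\|_{L^2(D)}\lesssim\kappa\|u_H\|_V$, while \eqref{equ:bound-u-L2-boundary} controls $\|z\|_{L^2(\partial D)}$ by a power of $\kappa$ times $\|u_H\|_{L^2(D)}$. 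Using the orthogonal decomposition $H^1(D)=\Psi_H\oplus W$, write $z=z_\Psi+z_W$ with $z_\Psi\in\Psi_H$, $z_W\in W$, and take $w_H:=u_H+z_\Psi\in\Psi_H$.

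For the numerator, split $a(u_H,w_H)=a(u_H,u_H)+a(u_H,z)-a(u_H,z_W)$. Testing the adjoint problem with $v=u_H$ gives $a(u_H,z)=2\kappa^2(nu_H,u_H)\in\mathds R$, so, exactly as in the proof of \eqref{equ:coercive-Helmholtz}, $\Re\big(a(u_H,u_H)+a(u_H,z)\big)=\|\nabla u_H\|_{L^2(D)}^2+\kappa^2(nu_H,u_H)\gtrsim\|u_H\|_V^2$. The key structural observation is that the defect $a(u_H,z_W)$ collapses to a pure boundary term: since $u_H\in\Psi_H$ and $z_W\in W$ we have $\hat a(\boldsymbol\omega;u_H,z_W)=0$, and since $a$ and $\hat a$ differ only in boundary contributions this yields $a(u_H,z_W)=-\kappa\int_{\partial D}\sqrt n(\Re u_H\Re z_W-\Im u_H\Im z_W)\,\mathrm ds-i\kappa\langle\sqrt n\,u_H,z_W\rangle$, so that $|\Re a(u_H,z_W)|\lesssim\kappa\,\|u_H\|_{L^2(\partial D)}\,\|z_W\|_{L^2(\partial D)}$.

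It then remains to show this boundary defect is small relative to $\|u_H\|_V^2$. Three ingredients enter. First, $W=\ker\mathcal I_H$, so \eqref{equ:approximation-interpolation-operator} gives $\|w\|_{L^2(D)}\lesssim H\|\nabla w\|_{L^2(D)}$ and $\|w\|_{L^2(\partial D)}\lesssim H^{1/2}\|\nabla w\|_{L^2(D)}$ for $w\in W$; under \cref{assump:mesh-size-wavenumber} this makes $\hat a(\boldsymbol\omega;\cdot,\cdot)$ coercive on $W$ (absorbing its indefinite volume and boundary parts) and gives $\|z_W\|_V\lesssim\|\nabla z_W\|_{L^2(D)}$. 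Second, $z_W$ solves $\hat a(w,z_W)=2\kappa^2(nw,u_H)+(\hat a-a)(w,z)$ for all $w\in W$ (using $\hat a(w,z_\Psi)=0$, which follows from the orthogonality since $\hat a$ is Hermitian, together with the adjoint equation); testing this with $w=z_W$ and inserting the trace bounds and the stability bounds for $z$ produces a high-order estimate of $\|\nabla z_W\|_{L^2(D)}$ in terms of $\|u_H\|_{L^2(D)}$ carrying a favorable power of $H$. Third, the $\kappa$-weighted trace inequality gives $\kappa^{1/2}\|u_H\|_{L^2(\partial D)}\lesssim\|u_H\|_V$. Combining these with $\kappa\|u_H\|_{L^2(D)}\le\|u_H\|_V$ bounds the defect by a positive power of $H\kappa$ times $\|u_H\|_V^2$, so for the hidden constant in \cref{assump:mesh-size-wavenumber} small enough, $\Re a(u_H,w_H)\gtrsim\|u_H\|_V^2$. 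Since $\|w_H\|_V\le\|u_H\|_V+\|z\|_V+\|z_W\|_V\lesssim(1+\kappa)\|u_H\|_V$, dividing by $\|u_H\|_V\|w_H\|_V$ yields the asserted bound $\big(C_s(1+\kappa)\big)^{-1}$ with $C_s$ independent of $\kappa$.

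The hard part will be the last step: the defect $\kappa\|u_H\|_{L^2(\partial D)}\|z_W\|_{L^2(\partial D)}$ is $\kappa$-weighted, the competitor $w_H$ is $\mathcal O(\kappa)$ in the $V$-norm, and the adjoint data carries a factor $\kappa^2$, whereas only $H\kappa\lesssim1$ is available; one therefore has to spend the $H^{1/2}$ trace gain on $W$, the high-order corrector estimate, and the wavenumber-explicit stability bounds of \Cref{sec:model-wavenumber-explicit-estimate} very economically to net a positive power of $H\kappa$, and it is precisely the boundary correction built into $\hat a$ that reduces the error to a boundary term amenable to this bookkeeping. A secondary but necessary point is the coercivity of $\hat a(\boldsymbol\omega;\cdot,\cdot)$ on $W$, which also underpins the well-posedness of the decomposition $H^1(D)=\Psi_H\oplus W$.
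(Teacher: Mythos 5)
Your structural idea---that $a-\hat a$ is a pure boundary form and that $\hat a(u_H,\cdot)$ annihilates $W$, so the discrete inf-sup should be inherited from \eqref{equ:coercive-Helmholtz}---is the same lever the paper pulls. But you apply it to an adjoint-based competitor, and the step you yourself flag as ``the hard part'' does not close under \cref{assump:mesh-size-wavenumber}. Run the bookkeeping: the adjoint datum is $2\kappa^2 n u_H$, so \cref{prop:stability-estimate} and \eqref{equ:bound-u-L2-boundary} give $\|z\|_V\lesssim \kappa^2\|u_H\|_{L^2(D)}$ and $\|z\|_{L^2(\partial D)}\lesssim \kappa^{3/2}\|u_H\|_{L^2(D)}$. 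Your corrector identity $\hat a(z_W,z_W)=2\kappa^2(nz_W,u_H)-(a-\hat a)(z_W,z)$, together with coercivity of $\hat a$ on $W$ and the gains $\|z_W\|_{L^2(D)}\lesssim H\|\nabla z_W\|_{L^2(D)}$ and $\|z_W\|_{L^2(\partial D)}\lesssim H^{1/2}\|\nabla z_W\|_{L^2(D)}$, yields at best $\|\nabla z_W\|_{L^2(D)}\lesssim (\kappa^2H+\kappa^{5/2}H^{1/2})\|u_H\|_{L^2(D)}$. Inserting this and the trace bound $\|u_H\|_{L^2(\partial D)}\lesssim \kappa^{-1/2}\|u_H\|_V$ into the defect gives $|\Re a(u_H,z_W)|\lesssim \kappa\|u_H\|_{L^2(\partial D)}\|z_W\|_{L^2(\partial D)}\lesssim \bigl((\kappa H)^{3/2}+\kappa\,(\kappa H)\bigr)\|u_H\|_V^2$. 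The second contribution is $O(\kappa)\|u_H\|_V^2$ under $\kappa H\lesssim 1$: it is \emph{not} a positive power of $\kappa H$ and cannot be absorbed by the main term $\|\nabla u_H\|^2_{L^2(D)}+\kappa^2(nu_H,u_H)\gtrsim\|u_H\|_V^2$ unless the resolution condition is strengthened to roughly $\kappa^2H\lesssim 1$, which defeats the purpose of the multiscale space. The root cause is that the adjoint datum injects a factor $\kappa^2$ that the $H^{1/2}$ and $H$ gains on $W$ cannot repay; this is exactly the classical Schatz-type obstruction, and your three ingredients do not overcome it. (A minor additional imprecision: $\hat a$ is not Hermitian---its boundary part is real-symmetric rather than conjugate-symmetric---though the two-sided orthogonality $\hat a(w,z_\Psi)=0$ you need does still hold.)

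For contrast, the paper's proof never solves an adjoint problem at the discrete level and never estimates a corrector norm. For fixed $u_H\in\Psi_H$ it takes an \emph{arbitrary} $v\in H^1(D)$, decomposes $v=v_H+w$ with $w\in W$, uses $\hat a(\boldsymbol{\omega};u_H,v_H)=\hat a(\boldsymbol{\omega};u_H,v)$ to rewrite $\Re a(\boldsymbol{\omega};u_H,v_H)$ as $\Re a(\boldsymbol{\omega};u_H,v)$ plus boundary terms in $v-v_H$, and concludes that the supremum over $\Psi_H$ dominates the supremum over $H^1(D)$ of the Rayleigh-type quotient, so the discrete inf-sup is inherited directly from \eqref{equ:coercive-Helmholtz} with the same constant. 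If you want to rescue your route you would need a genuinely $\kappa$-robust smallness estimate for $z_W$ (e.g., via $H^2$-regularity of $z$ and a second-order corrector bound), which is substantially more than what your outline supplies.
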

\begin{proof}
  For all $v \in H^1(D)$ and $v_H \in \Psi_H$,
  \begin{equation*}
    a(\boldsymbol{\omega}; u_H, v_H) = \hat{a}(\boldsymbol{\omega}; u_H, v_H) - \langle i\kappa\sqrt{n(\mathbf{x}, \boldsymbol{\omega})}u_H, v_H \rangle - \Im \langle i\kappa\sqrt{n(\mathbf{x}, \boldsymbol{\omega})}\bar{u}_H, v_H \rangle.
  \end{equation*}
  Owing to $\hat{a}(\boldsymbol{\omega}; u_H, v_H) = \hat{a}(\boldsymbol{\omega}; u_H, v)$ and
  \begin{equation*}
    a(\boldsymbol{\omega}; u_H, v_H) = a(\boldsymbol{\omega}; u_H, v) + i\kappa \langle \sqrt{n}u_H, v-v_H \rangle + \Im \langle i\kappa\sqrt{n}\bar{u}_H, v-v_H \rangle
  \end{equation*}
  for all $v - v_H \in W$, and both $u_H, \bar{u}_H $ belong to $\Psi_H$. Then for all $v \in H^1(D)$
  \begin{equation*}
    \Re a(\boldsymbol{\omega}; u_H, v_H) = \Re a(\boldsymbol{\omega}; u_H, v) - \Im \kappa \langle \sqrt{n}u_H, v-v_H \rangle + \Re \kappa \langle \sqrt{n}\bar{u}_H, v-v_H \rangle,
  \end{equation*}
  which infers that
  \begin{equation*}
    \sup_{0 \neq v_H \in \Psi_H} \frac{\Re a(u_H, v_H)}{\|u_H\|_V\|v_H\|_V} \geq \frac{\Re a(u_H, v)}{\|u_H\|_V\|v\|_V}.
  \end{equation*}
  We therefore obtain
  \begin{align*}
    \inf_{0 \neq u_H \in \Psi_H} \sup_{0 \neq v_H \in \Psi_H} \frac{\Re a(u_H, v_H)}{\|u_H\|_V\|v_H\|_V} & \geq \inf_{0 \neq u_H \in \Psi_H}\frac{\Re a(u_H, v)}{\|u_H\|_V\|v\|_V} \\
    & \geq \inf_{0 \neq u \in H^1(D)} \sup_{0 \neq v \in H^1(D)} \frac{\Re a(u, v)}{\|u\|_V\|v\|_V}.
  \end{align*}
  This completes the proof.
\end{proof}

We now derive the approximation error of the global multiscale method.
\begin{lemma}
  Let $u$ and $u_{H}$ solve \eqref{equ:definition-weak-form} and \eqref{equ:discrete-weak-from-msfem}, respectively, and the resolution condition \cref{assump:mesh-size-wavenumber} hold. Then, the approximation error of the global multiscale method satisfies
  \begin{equation*}
    \|u - u_H\|_{L^2(D)} \leq C H^{2+s} (1 + \kappa H) \|f\|_{H^s(D)},
  \end{equation*}
  and
  \begin{equation*}
    \|u - u_H\|_V \leq CH^{s}\|f\|_{H^s(D)} + (1 + \kappa)H^{2+s} \|f\|_{H^s(D)}.
  \end{equation*}
  where $C$ is independent of $\kappa$.
  \label{lem:error-analysis-msfem}
\end{lemma}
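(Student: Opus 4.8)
\textbf{Proof plan for \lemref{lem:error-analysis-msfem}.}

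The plan is to follow the standard LOD/Galerkin-type argument adapted to the Robin boundary and the modified bilinear form $\hat a$. First I would establish the $V$-error estimate via a C\'ea-type quasi-optimality: because $u_H$ solves the Galerkin problem \eqref{equ:discrete-weak-from-msfem} in the subspace $\Psi_H$ and the discrete inf-sup condition (the lemma just proved) holds with constant $\sim C_s(1+\kappa)^{-1}$, I get
\begin{equation*}
  \|u - u_H\|_V \lesssim (1+\kappa) \inf_{v_H \in \Psi_H} \|u - v_H\|_V.
\end{equation*}
The natural candidate for $v_H$ is the $\hat a$-orthogonal projection of $u$ onto $\Psi_H$, equivalently $v_H = u - e$ where $e \in W$ is the "corrector" part. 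Since $W = \ker \mathcal{I}_H$ (up to the weighting), the quasi-interpolation bound \eqref{equ:approximation-interpolation-operator} together with the local stability of $\mathcal{I}_H$ controls $\|e\|_V$; the key identity is that for $w\in W$, $\hat a(\boldsymbol\omega; u - \mathcal I_H u, w)$ can be rewritten using the PDE so that the dominant term is $\|f - (\text{projection stuff})\|$ tested against $w$, which by $w \in W$ and Cauchy–Schwarz gains a factor $H$ (or $H^{1+s}$ when $f\in H^s$) from $\|w - \mathcal I_H w\|_{L^2} \lesssim H\|\nabla w\|_{L^2}$. This is where the regularity/elliptic-lifting estimate $\|u\|_{H^{2+s}} \lesssim (1+\kappa)\|f\|_{H^s}$ (a consequence of \cref{prop:stability-estimate} plus standard shift theory on the convex polygon, with careful tracking of the $\kappa$-weights on the boundary term via \eqref{equ:bound-u-L2-boundary}) must be invoked to convert $|u|_{H^{2+s}}$ into $\|f\|_{H^s}$. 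Collecting these gives the claimed $\|u-u_H\|_V \lesssim H^s\|f\|_{H^s} + (1+\kappa)H^{2+s}\|f\|_{H^s}$.

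For the $L^2$-estimate I would use the Aubin–Nitsche duality trick tailored to this nonsymmetric, sign-indefinite problem. Introduce the dual solution $z \in H^1(D)$ solving $a(\boldsymbol\omega; v, z) = (v, u - u_H)$ for all $v$, and use Galerkin orthogonality $a(\boldsymbol\omega; u - u_H, v_H) = 0$ for $v_H \in \Psi_H$ to write
\begin{equation*}
  \|u - u_H\|_{L^2(D)}^2 = a(\boldsymbol\omega; u - u_H, z) = a(\boldsymbol\omega; u - u_H, z - z_H),
\end{equation*}
for any $z_H \in \Psi_H$. Then bound $|a(\boldsymbol\omega; u-u_H, z-z_H)| \lesssim \|u-u_H\|_V \|z - z_H\|_V$ (the boundary term needs the trace/\eqref{equ:bound-u-L2-boundary}-type control to keep the $\kappa$-dependence benign), choose $z_H$ as the corrector-based best approximation of $z$ so that $\|z - z_H\|_V \lesssim H^{2}(1+\kappa H)\|u-u_H\|_{L^2(D)}$ (here the right-hand side of the dual problem is $u - u_H \in L^2$, so $s=0$ and $\|z\|_{H^2}\lesssim\|u-u_H\|_{L^2}$ after the stability estimate applied to the dual problem), and combine with the $V$-error bound already proved. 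Dividing by $\|u-u_H\|_{L^2(D)}$ yields $\|u-u_H\|_{L^2(D)} \lesssim H^2(1+\kappa H)\|u-u_H\|_V / \text{(best-approx factor)}$; substituting the $V$-bound and simplifying with $H\kappa \lesssim 1$ collapses the estimate to $H^{2+s}(1+\kappa H)\|f\|_{H^s}$.

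The main obstacle I anticipate is not the abstract duality skeleton but the \emph{wavenumber-explicit} and \emph{boundary-term} bookkeeping: the modified form $\hat a$ differs from $a$ by the Robin-type surface term involving $\Re u\,\Re v - \Im u\,\Im v$, so the orthogonality built into $\Psi_H$ is with respect to $\hat a$, not $a$, and one must carefully account for the discrepancy $a - \hat a$ (a boundary term) when passing between Galerkin orthogonality and the corrector decomposition — exactly the manipulation carried out in the proof of the discrete inf-sup lemma above. Keeping every $\kappa$ power correct through the trace inequalities on $\partial D$, and ensuring the corrector estimate $\|z - z_H\|_V \lesssim H^2(1+\kappa H)\|u-u_H\|_{L^2}$ genuinely has the $H^2$ (not merely $H^1$) gain — which relies on a double use of the $H$-gain from $\mathcal I_H$ on both the primal corrector and the test function, together with the $H^2$-regularity of the dual solution — is the delicate part. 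I would also need to confirm that \cref{assump:mesh-size-wavenumber} ($H\kappa\lesssim 1$) is exactly what absorbs the stray factors $(1+\kappa H)$ into constants where needed, leaving the clean rates stated.
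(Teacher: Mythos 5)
Your plan has the right outer skeleton (Galerkin orthogonality plus an Aubin--Nitsche duality step), but it misses the mechanism that actually produces the superconvergent rates, and as written two of its key steps do not hold. First, the paper does not use a C\'ea/inf-sup quasi-optimality argument combined with higher regularity of $u$; it never needs (and could not afford) an estimate like $\|u\|_{H^{2+s}(D)}\lesssim(1+\kappa)\|f\|_{H^s(D)}$, whose $\kappa$-dependence would in any case be much worse than $(1+\kappa)$. Instead, the central observation is that the error $e_H=u-u_H$ lies in $W$, which by the constraints $(\Phi,\phi_k^H)=\alpha_j\delta_{jk}$ is $L^2$-orthogonal to $V_H$; hence $F(e_H)=(f-\mathcal{I}_Hf,e_H)$, and taking the imaginary and real parts of $a(e_H,e_H)$ yields the two workhorse bounds
\begin{equation*}
  \kappa\sqrt{n_1}\,\|e_H\|^2_{L^2(\partial D)}\lesssim\|f-\mathcal{I}_Hf\|_{L^2(D)}\|e_H\|_{L^2(D)},\qquad
  \|\nabla e_H\|^2_{L^2(D)}\lesssim\kappa^2\|e_H\|^2_{L^2(D)}+\|f-\mathcal{I}_Hf\|_{L^2(D)}\|e_H\|_{L^2(D)}.
\end{equation*}
The rate $H^s$ then comes entirely from the data oscillation $\|f-\mathcal{I}_Hf\|_{L^2(D)}\lesssim H^s|f|_{H^s(D)}$, not from approximating $u$ in $\Psi_H$; a best-approximation argument for $u$ gives at most $\mathcal{O}(H)$ in the $V$-norm and cannot see the regularity of $f$.

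Second, your duality step as stated would fail to deliver $H^{2+s}$: for the dual problem with right-hand side $\rho=e_H\in L^2(D)$ one only has $|w|_{H^2(D)}\lesssim\|\rho\|_{L^2(D)}$, so any interpolation- or corrector-based choice $z_H$ gives $\|w-\mathcal{I}_Hw\|_V\lesssim H\|e_H\|_{L^2(D)}$ --- a single power of $H$, not the $H^2(1+\kappa H)$ you assert, since the dual datum is merely $L^2$ and gains nothing from $\mathcal{I}_H$. The crude bound $|a(e_H,w-\mathcal{I}_Hw)|\lesssim\|e_H\|_V\|w-\mathcal{I}_Hw\|_V$ therefore yields only $\|e_H\|_{L^2(D)}\lesssim H^{1+s}\|f\|_{H^s(D)}$. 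The paper closes the gap by \emph{not} using that crude bound: it expands $a(e_H,w-\mathcal{I}_Hw)$ term by term, applies the $H$, $H^2$ and $H^{3/2}$ gains of \eqref{equ:approximation-interpolation-operator} to $w$ separately in the volume, reaction and boundary terms, and then substitutes the two workhorse bounds above so that $\|\nabla e_H\|$ and $\|e_H\|_{L^2(\partial D)}$ are themselves expressed through $\|f-\mathcal{I}_Hf\|_{L^2(D)}\|e_H\|_{L^2(D)}$; the resulting term $\kappa^2H^2\|e_H\|^2_{L^2(D)}$ is absorbed on the left using \cref{assump:mesh-size-wavenumber}, leaving $\|e_H\|_{L^2(D)}\lesssim H^2(1+\kappa H)\|f-\mathcal{I}_Hf\|_{L^2(D)}$. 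The $V$-bound then follows a posteriori from $\|e_H\|^2_V\lesssim(1+\kappa^2)\|e_H\|^2_{L^2(D)}+\|f-\mathcal{I}_Hf\|^2_{L^2(D)}$, again a consequence of testing with $e_H$ rather than of quasi-optimality. Without this bootstrap your argument is one power of $H$ short in $L^2$ and carries spurious $\kappa$-factors in $V$.
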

\begin{proof}
  Denote $e_H := u - u_H \in W$ the error function. Let $v = v_H$ in \eqref{equ:definition-weak-form}, and we have the classical Galerkin orthogonality $a(e_H, v_H) = 0$ for all $v_H \in \Psi_H$.
  Then, the boundary error satisfies
  \begin{align}
    \kappa \sqrt{n_1} \|e_H\|^2_{L^2(\partial D)} & \leq \Im a(e_H, e_H) = \Im f(e_H) = (f - \mathcal{I}_Hf, e_H) \nonumber \\
    & \leq \|f - \mathcal{I}_Hf\|_{L^2(D)} \|e_H\|_{L^2(D)}.
    \label{equ:boundary-error-estimate}
  \end{align}
  And
  \begin{equation}
    \|\nabla e_H\|^2_{L^2(D)} - \kappa^2(n e_H, e_H) \leq \kappa \sqrt{n_2} \|e_H\|^2_{L^2(\partial D)} \lesssim \|f - \mathcal{I}_Hf\|_{L^2(D)} \|e_H\|_{L^2(D)}.
    \label{equ:real-part-error-estimate}
  \end{equation}
  We next estimate the $L^2$-error. Using the Nitsche's duality argument, consider the adjoint problem
  \begin{equation*}
    \begin{aligned}
      &-\Delta w - \kappa^2n w = \rho \quad \mathbf{x}\in D, \\
      &\nabla w \cdot \nu = i\kappa \sqrt{n} w \quad \mathbf{x}\in \partial D.
    \end{aligned}
  \end{equation*}
  We then have the weak form
  \begin{equation}
    a(v, w) = (v, \rho), \quad \forall v \in H^1(D).
    \label{equ:weak-form-adjoint-problem}
  \end{equation}
  It can be proved that $w$ satisfies $|w|_{H^2(D)} \lesssim \|\rho\|_{L^2(D)}$.
  Then, let $v = \rho = e_H$ and it has
  \begin{align*}
    & \|e_H\|^2_{L^2(D)} = a(e_H, w) = a(e_H, w - \mathcal{I}_Hw) \\
    = & (\nabla e_H, \nabla (w - \mathcal{I}_Hw)) - \kappa^2(n(\mathbf{x})e_H, w - \mathcal{I}_Hw) - i\kappa \langle \sqrt{n(\mathbf{x})}e_H, w - \mathcal{I}_Hw \rangle.
  \end{align*}
  Owing to the approximation error \eqref{equ:approximation-interpolation-operator}, we have
  \begin{equation*}
    \|w - \mathcal{I}_Hw \|_{L^2(\partial D)} \leq CH^{\frac{3}{2}} |w|_{H^2(D)}.
  \end{equation*}
  We then arrive at
  \begin{align*}
    \|e_H\|^2_{L^2(D)} \lesssim & \|\nabla e_H\|_{L^2(D)} H |w|_{H^2(D)} + \kappa^2n_2\|e_H\|_{L^2(D)} H^2|w|_{H^2(D)} \\
    &+ \kappa \sqrt{n_2} \|e_H\|_{L^2(\partial D)} H^{\frac 32} |w|_{H^2(D)}.
  \end{align*}
  If \cref{assump:mesh-size-wavenumber} holds, we also have
  \begin{align*}
    \|e_H\|^2_{L^2(D)} &\lesssim H^2\|\nabla e_H\|^2_{L^2(D)} + \kappa^2 H^3\|e_H\|^2_{L^2(\partial D)}.
  \end{align*}
  Substituting \eqref{equ:boundary-error-estimate} and \eqref{equ:real-part-error-estimate} yields
  \begin{align*}
    \|e_H\|^2_{L^2(D)} \lesssim & H^2 \left(\kappa^2 n_2 \|e_H\|^2_{L^2(D)} + \|f - \mathcal{I}_Hf\|_{L^2(D)} \|e_H\|_{L^2(D)} \right) \\
    & + \kappa H^3 \|f - \mathcal{I}_Hf\|_{L^2(D)} \|e_H\|_{L^2(D)},
  \end{align*}
  which indicates $\|e_H\|_{L^2(D)} \lesssim H^2(1 + \kappa H) \|f - \mathcal{I}_Hf\|_{L^2(D)}$.
  Since $$\|e_H\|^2_V \leq C(1 + \kappa^2) \|e_H\|^2_{L^2(D)} + \|f - \mathcal{I}_H f\|^2_{L^2(D)},$$ we arrive at
  \begin{equation*}
    \|e_H\|_V \lesssim \|f - \mathcal{I}_H f\|_{L^2(D)} + (1 + \kappa)\|e_H\|_{L^2(D)}.
  \end{equation*}
  This finishes the proof.
\end{proof}

\subsubsection{Localized multiscale basis}
Define the patches $\{D_{\ell}\}$ associated with $\mathbf{x}_i \in \mathcal{N}_H$
\begin{align*}
  &D_0(\mathbf{x}_i) := \mathrm{supp}\{\phi_i\} = \cup\{K\in\mathcal{T}_H\;|\; \mathbf{x}_i\in K\}, \\
  &D_{\ell} := \cup\{K\in\mathcal{T}_H\;|\;K\cap\overline{D_{\ell-1}}\neq \emptyset\},\quad \ell = 1,2,\cdots.
\end{align*}
The localized basis $\Phi_{H, \ell} = \mathrm{span}\{\Phi_j\}$ is then constructed by solving the optimal problems as follows.
\begin{equation}
  \begin{aligned}
    &\Phi_j = \argmin_{\phi\in (H^1(D_{\ell}; \mathds{R}))^2} \hat{a}_{\ell}(\phi, \phi), \\
    &\mathrm{s.t.} \quad \left(\Phi, \phi_k^H \right)_{L^2(D_{\ell})} = \alpha_j\delta_{jk}(1, 1)^T, \; \forall 1 \leq k \leq N_H,
  \end{aligned}
  \label{equ:optimal-problem-localized}
\end{equation}
where
\begin{equation*}
  \hat{a}_{\ell}(\phi, \phi) = -\kappa\int_{\partial D \cap \overline{D_{\ell}}} \sqrt{n(\mathbf{x})}(\phi_R^2 - \phi_I^2) \mathrm{d}\mathbf{s} + \int_{D \cap \overline{D_{\ell}}} |\nabla \phi|^2 \mathrm{d}\mathbf{x} - \kappa^2\int_{D \cap \overline{D_{\ell}}} n(\mathbf{x}) |\phi|^2 \mathrm{d}\mathbf{x}.
\end{equation*}

Owing to the exponential decay of the basis function over the full domain, the localized multiscale method has the approximation error related to the oversampling size $\ell$ as the following lemma.
\begin{lemma}
  If the resolution condition of \cref{assump:mesh-size-wavenumber} is satisfied, there exist constant $\beta < 1$ independent of $H$ and $\kappa$ and $\ell \gtrsim \log \kappa$ such that
  \begin{equation}
    \|u - u_{H,\ell}\|_V \lesssim \left(H^{s} + \kappa H^{2+s}\right) \|f\|_{H^s(D)} + c_{\ell}\beta^{\ell} \|f\|_{L^2(D)}.
  \end{equation}
  \label{lem:approximation-error-localized-MsFEM}
\end{lemma}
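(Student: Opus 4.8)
The plan is to decompose the error as
$\|u-u_{H,\ell}\|_V \le \|u-u_H\|_V + \|u_H-u_{H,\ell}\|_V$,
where $u_H$ is the global multiscale solution of \eqref{equ:discrete-weak-from-msfem}. The first term is already controlled by \lemref{lem:error-analysis-msfem}, which contributes the $(H^{s}+\kappa H^{2+s})\|f\|_{H^s(D)}$ part. Hence the whole work goes into showing that localization costs only $c_\ell\beta^\ell\|f\|_{L^2(D)}$ in the $V$-norm. I would proceed in three stages: (i) exponential decay of the global basis functions $\Phi_j$ away from their associated node; (ii) a localization (truncation) estimate bounding $\Phi_j-\Phi_{j}^{\ell}$, and more generally $v_H-v_{H,\ell}$ for the associated Galerkin-type projections, by $c_\ell\beta^\ell$; (iii) a perturbation/Strang argument transferring this into the error $\|u_H-u_{H,\ell}\|_V$.

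For stage (i), I would use the constrained-minimization characterization of $\Phi_j$ in \eqref{equ:optimal-problems-global}: the minimizer is $\hat a(\boldsymbol{\omega};\cdot,\cdot)$-orthogonal to the remainder space $W$, so for a cutoff function $\eta$ that is $0$ on $D_{\ell-1}(\mathbf{x}_j)$ and $1$ outside $D_{\ell}(\mathbf{x}_j)$ one tests the defining relation with $(\mathrm{id}-\mathcal{I}_H)(\eta^2\Phi_j)\in W$ and runs the standard Caccioppoli/iteration argument over the annular layers $D_{\ell}\setminus D_{\ell-1}$. The principal part $\int|\nabla\Phi|^2$ is coercive; the sign-indefinite mass term $-\kappa^2\int n|\Phi|^2$ and the boundary term $\pm\kappa\int_{\partial D}\sqrt n(\phi_R^2-\phi_I^2)$ are absorbed as lower-order perturbations using the local stability of $\mathcal{I}_H$, the trace inequality on $\mathcal{T}_H$, and \cref{assump:mesh-size-wavenumber} ($H\kappa\lesssim1$), which makes these contributions on each layer a fixed fraction smaller than the gradient energy. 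This yields a contraction factor $\beta<1$ independent of $H$ and $\kappa$, hence $\|\Phi_j\|_{V(D\setminus D_\ell(\mathbf{x}_j))}\lesssim \beta^{\ell}\|\Phi_j\|_{V}$. Stage (ii) then follows the usual LOD template: comparing the global corrector with the one obtained from the truncated form $\hat a_\ell$ in \eqref{equ:optimal-problem-localized}, the difference is driven entirely by the tail of $\Phi_j$ outside $D_\ell$, giving $\|\Phi_j-\Phi_j^{\ell}\|_V\lesssim c_\ell\beta^{\ell}\|\Phi_j\|_V$ with $c_\ell$ growing only polynomially in $\ell$ (from summing overlapping patches).

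For stage (iii) I would estimate the mismatch between the global and localized discrete problems on the multiscale spaces and invoke an inf-sup–based first Strang lemma. The localized sesquilinear form differs from $a(\boldsymbol{\omega};\cdot,\cdot)$ restricted to $\Psi_{H,\ell}$ only through the corrector tails, so its consistency error against $u_H$ is $\mathcal{O}(c_\ell\beta^\ell)\|f\|_{L^2(D)}$ after using \cref{prop:stability-estimate} to bound $u_H$; crucially, the same perturbation bound, together with the choice $\ell\gtrsim\log\kappa$ (so that $c_\ell\beta^\ell$ beats the $(1+\kappa)^{-1}$ factor in the inf-sup constant of \eqref{equ:coercive-Helmholtz}), shows the localized form inherits a uniform inf-sup bound. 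Combining, $\|u_H-u_{H,\ell}\|_V\lesssim c_\ell\beta^{\ell}\|f\|_{L^2(D)}$, and the triangle inequality concludes. The main obstacle is stage (i) in the present indefinite, complex-valued, Robin setting: ensuring the decay rate $\beta$ is genuinely independent of $\kappa$ requires carefully tracking the $\kappa$-weighted norms through the Caccioppoli iteration and using $H\kappa\lesssim1$ precisely where the mass and boundary terms enter; getting the inf-sup stability of the truncated problem to survive localization (hence the $\ell\gtrsim\log\kappa$ requirement) is the second delicate point.
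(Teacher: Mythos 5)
Your outline is essentially the argument the paper has in mind: the paper omits the proof of this lemma entirely, stating only that it ``follows the same procedures as in'' the cited LOD references, and those procedures are precisely the decomposition through the global multiscale solution, the Caccioppoli-type iteration for exponential decay of the correctors, and the Strang-type perturbation argument you describe. Your identification of where $H\kappa\lesssim 1$ is needed to absorb the indefinite mass and Robin boundary terms, and of the role of $\ell\gtrsim\log\kappa$ in preserving the inf-sup constant under localization, matches the standard template the paper defers to.
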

Its proof follows the same procedures as in \cite{fda66b15-cbd8-3cbb-a160-13398a1c4325,Wu2022}, hence omit it here.

\section{Regularity with respect to the parametric variables}
\label{sec:regularity-wrt-variables}
We need a function space defined with respect to the stochastic parameter $\boldsymbol{\omega}$, a weighted Sobolev space $\mathcal{W}_{s, \boldsymbol{\gamma}}(\Omega; H^1(D))$ with the norm of $F \in \mathcal{W}_{s, \boldsymbol{\gamma}}(\Omega; L^2(D))$
\begin{multline}
  \|F\|_{\mathcal{W}_{s, \boldsymbol{\gamma}}(\Omega; L^2(D))} := \\
  \left(\sum_{\mathfrak{u} \subseteq \{1:s\} } \gamma_{\mathfrak{u}}^{-1} \int_{[-\frac 12, \frac 12]^{|\mathfrak{u}|}} \left(\int_{[-\frac 12, \frac 12]^{s - |\mathfrak{u}|}} \frac{\partial^{|\mathfrak{u}|} F }{\partial \boldsymbol{\omega}_{\mathfrak{u}}}(\boldsymbol{\omega}_{\mathfrak{u}}; \boldsymbol{\omega}_{\{1:s\}\setminus \mathfrak{u}}; 0)  \mathrm{d}\boldsymbol{\omega}_{\{1:s\}\setminus \mathfrak{u}} \right)^2 \mathrm{d}\mathfrak{u} \right)^{1/2},
  \label{equ:definition-norm-W-for-F}
\end{multline}
where $\{1:s\}$ represents the shorthand notation of indices set $\{1, 2, \cdots, s\}$, $\frac{\partial^{|\mathfrak{u}|} F }{\partial \boldsymbol{\omega}_{\mathfrak{u}}}$ denotes the mixed derivative with respect to $\omega_j$ with $j \in \mathfrak{u}$, and $\boldsymbol{\omega}_{\{1:s\}\setminus \mathfrak{u}}$ denotes the variable $\omega_j$ with $j \notin \mathfrak{u}$.

The fundamental quantity in uncertainty quantification is
\begin{equation*}
  \int_{\Omega} F(\boldsymbol{\omega}) \mathrm{d} \boldsymbol{\omega}, \quad \text{ with } \quad F(\boldsymbol{\omega}) = G(u(\cdot, \boldsymbol{\omega})),
\end{equation*}
where $G \in L^2(D)$ is a functional.
Then the corresponding norm is
\begin{multline*}
  \|u\|_{\mathcal{W}_{s, \boldsymbol{\gamma}}(\Omega; H^1(D))} := \\
  \left(\sum_{\mathfrak{v} \subseteq \{1:s\} } \gamma_{\mathfrak{v}}^{-1} \int_{[-\frac 12, \frac 12]^{|\mathfrak{v}|}} \left\|\int_{[-\frac 12, \frac 12]^{s - |\mathfrak{v}|}} \frac{\partial^{|\mathfrak{v}|} u }{\partial \boldsymbol{\omega}_{\mathfrak{v}}}(\cdot, (\boldsymbol{\omega}_{\mathfrak{v}}; \boldsymbol{\omega}_{\{1:s\}\setminus \mathfrak{v}}; 0)) \mathrm{d}\boldsymbol{\omega}_{\{1:s\}\setminus \mathfrak{v}} \right\|^2_V \mathrm{d}\mathfrak{v} \right)^{1/2}.
\end{multline*}
For $s \geq 1$ and $u$ satisfying $\|u\|_{\mathcal{W}_{s, \boldsymbol{\gamma}}(\Omega; H^1(D))} < \infty$, since
\begin{equation*}
  \frac{\partial^{|\mathfrak{u}|}F}{\partial \boldsymbol{\omega}_{\mathfrak{u}}} = G\left( \frac{\partial^{|\mathfrak{u}|}u}{\partial \boldsymbol{\omega}_{\mathfrak{u}}}(\cdot, \boldsymbol{\omega}) \right),
\end{equation*}
using \eqref{equ:definition-norm-W-for-F} yields
\begin{equation}
  \|F\|_{\mathcal{W}_{s, \boldsymbol{\gamma}}(\Omega; L^2(D))} \leq \|G(\cdot)\|_{L^2(D)} \|u\|_{\mathcal{W}_{s, \boldsymbol{\gamma}}(\Omega; H^1(D))} < \infty.
\end{equation}

We now estimate the regularity of the solution $u(\mathbf{x}, \boldsymbol{\omega})$ with respect to the random stochastic variables.
\begin{theorem}
  Under the assumption on the bounds of random refractive index \eqref{equ:bounds-for-n}, for any multi-index $\boldsymbol{\nu}$ with $|\boldsymbol{\nu}| \ge 0$, we have
  \begin{gather*}
    \kappa \|\partial^{\boldsymbol{\nu}}u(\mathbf{x}, \boldsymbol{\omega}) \|_{L^2(D)} \leq C (1 + \kappa^{|\boldsymbol{\nu}|}) |\boldsymbol{\nu}|! \prod_j \|\psi_j\|_{\infty}^{m_j} \|f\|_{L^2(D)}, \\
    \|\partial^{\boldsymbol{\nu}}u(\mathbf{x}, \boldsymbol{\omega}) \|_{V} \leq C (1 + \kappa^{|\boldsymbol{\nu}|}) |\boldsymbol{\nu}|! \prod_j \|\psi_j\|_{\infty}^{m_j} \|f\|_{L^2(D)},
  \end{gather*}
  where $\sum_j m_j = |\boldsymbol{\nu}|$, $C$ is independent of $\kappa$ but depends on the domain, $\mu$, $n_j$ and $\beta_j (j = 1,2)$.
  \label{thm:regularity-u-wrt-stochastic}
\end{theorem}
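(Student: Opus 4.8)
The plan is to argue by induction on $|\boldsymbol{\nu}|$. The base case $|\boldsymbol{\nu}|=0$ is exactly \cref{prop:stability-estimate} (together with the boundary bound \eqref{equ:bound-u-L2-boundary}). For the inductive step I would differentiate the weak form \eqref{equ:definition-weak-form} with respect to the parameters. Since $F(v)$ does not depend on $\boldsymbol{\omega}$ and $n(\mathbf{x},\boldsymbol{\omega})=n_0+\sum_j\omega_j\psi_j$ is affine in $\boldsymbol{\omega}$ (so that $\partial_{\omega_j}n=\psi_j$ and all higher $\boldsymbol{\omega}$-derivatives of $n$ vanish), applying $\partial^{\boldsymbol{\nu}}$ to $a(\boldsymbol{\omega};u,v)=F(v)$ and using the Leibniz rule gives, for $|\boldsymbol{\nu}|\ge1$,
\begin{equation*}
  a(\boldsymbol{\omega};\partial^{\boldsymbol{\nu}}u,v)=\kappa^2\sum_{j:\nu_j\ge1}\nu_j\int_D\psi_j\,\partial^{\boldsymbol{\nu}-e_j}u\,\bar v\,\mathrm{d}\mathbf{x}+i\kappa\sum_{\mathbf{0}<\boldsymbol{\mu}\le\boldsymbol{\nu}}\binom{\boldsymbol{\nu}}{\boldsymbol{\mu}}\int_{\partial D}\bigl(\partial^{\boldsymbol{\mu}}\sqrt{n}\bigr)\,\partial^{\boldsymbol{\nu}-\boldsymbol{\mu}}u\,\bar v\,\mathrm{d}s,\quad\forall v\in H^1(D).
\end{equation*}
In other words, $\partial^{\boldsymbol{\nu}}u$ solves a Helmholtz-type problem governed by the \emph{same} sesquilinear form $a(\boldsymbol{\omega};\cdot,\cdot)$, but with an extra volume source and an extra Robin datum, both built from strictly lower-order parametric derivatives of $u$. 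Note that the source for the volume term involves only first-order Leibniz terms (because $n$ is affine), whereas the boundary term carries the full Leibniz expansion because $\sqrt{n}$ is nonlinear in $\boldsymbol{\omega}$.

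Next I would establish a stability estimate for this perturbed problem. Rather than invoking \cref{prop:stability-estimate} as a black box (it covers only an $L^2(D)$ right-hand side with homogeneous Robin data), I would rerun its proof with $u$ replaced by $w:=\partial^{\boldsymbol{\nu}}u$: testing with $v=w$ and with $v=\alpha\cdot\nabla w$ and repeating the Rellich--Morawetz manipulations, the new right-hand side contributes terms that are absorbed by Cauchy's inequality into $\|w\|_V$ and $\|w\|_{L^2(\partial D)}$, leaving a bound of the schematic form
\begin{equation*}
  \kappa\|\partial^{\boldsymbol{\nu}}u\|_{L^2(D)}+\|\partial^{\boldsymbol{\nu}}u\|_{V}\lesssim\sum_{j:\nu_j\ge1}\nu_j\,\|\psi_j\|_{\infty}\,R_j+\sum_{\mathbf{0}<\boldsymbol{\mu}\le\boldsymbol{\nu}}\binom{\boldsymbol{\nu}}{\boldsymbol{\mu}}\,\|\partial^{\boldsymbol{\mu}}\sqrt{n}\|_{L^{\infty}(\partial D)}\,R_{\boldsymbol{\mu}},
\end{equation*}
where $R_j$, $R_{\boldsymbol{\mu}}$ are $L^2(D)$-, $V$- or $L^2(\partial D)$-norms of $\partial^{\boldsymbol{\nu}-e_j}u$, $\partial^{\boldsymbol{\nu}-\boldsymbol{\mu}}u$ weighted by explicit powers of $(1+\kappa)$ read off exactly as in \cref{prop:stability-estimate} and \eqref{equ:bound-u-L2-boundary} (an analogous $L^2(\partial D)$-bound for $\partial^{\boldsymbol{\nu}}u$ is produced along the way, so the induction carries all three norms simultaneously). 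To control the coefficients $\partial^{\boldsymbol{\mu}}\sqrt{n}$, I would use that $n$ is affine: by Fa\`a di Bruno with an affine inner function only the finest partition survives, so $\partial^{\boldsymbol{\mu}}\sqrt{n}=g^{(|\boldsymbol{\mu}|)}(n)\prod_j\psi_j^{\mu_j}$ with $g(t)=\sqrt{t}$ and $|g^{(k)}(t)|=\tfrac{(2k-3)!!}{2^{k}}t^{1/2-k}\lesssim(k-1)!\,t^{1/2-k}$ for $k\ge1$; combined with $n\ge n_1>0$ this yields $\|\partial^{\boldsymbol{\mu}}\sqrt{n}\|_{L^{\infty}(\partial D)}\lesssim|\boldsymbol{\mu}|!\prod_j\|\psi_j\|_{\infty}^{\mu_j}$ up to an $n_1$-dependent constant.

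Finally I would close the induction. Substituting the inductive hypothesis for $\partial^{\boldsymbol{\nu}-e_j}u$ and $\partial^{\boldsymbol{\nu}-\boldsymbol{\mu}}u$ into the right-hand side, in each term the $\psi$-factors recombine to $\prod_j\|\psi_j\|_{\infty}^{\nu_j}$ (since $\mu_j+(\nu_j-\mu_j)=\nu_j$) and the $\kappa$-powers combine into $1+\kappa^{|\boldsymbol{\nu}|}$, so that the claimed bound follows provided one controls the purely combinatorial factor $\sum_{\boldsymbol{\mu}\le\boldsymbol{\nu}}\binom{\boldsymbol{\nu}}{\boldsymbol{\mu}}\,|\boldsymbol{\mu}|!\,(|\boldsymbol{\nu}|-|\boldsymbol{\mu}|)!$ (for the boundary term) and $\sum_{j}\nu_j=|\boldsymbol{\nu}|$ (for the volume term); the former is handled by grouping according to $|\boldsymbol{\mu}|=m$ and using $\sum_{\boldsymbol{\mu}\le\boldsymbol{\nu},\,|\boldsymbol{\mu}|=m}\binom{\boldsymbol{\nu}}{\boldsymbol{\mu}}=\binom{|\boldsymbol{\nu}|}{m}$. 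I expect the main obstacle to be precisely this bookkeeping: one must verify that the factorial from $\partial^{\boldsymbol{\mu}}\sqrt{n}$, the binomial coefficients, and the $(|\boldsymbol{\nu}|-|\boldsymbol{\mu}|)!$ supplied by the inductive hypothesis do not accumulate faster than $|\boldsymbol{\nu}|!$ across the recursion — this is exactly why the sharp asymptotics giving the effective $(|\boldsymbol{\mu}|-1)!$ (rather than $|\boldsymbol{\mu}|!$) for the derivatives of $\sqrt{n}$ are needed. A secondary technical point is justifying enough spatial ($H^2$-type) regularity of $\partial^{\boldsymbol{\nu}}u$ to legitimately test against $\alpha\cdot\nabla\partial^{\boldsymbol{\nu}}u$, which follows from elliptic regularity applied to the differentiated equation.
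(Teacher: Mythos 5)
Your proposal follows essentially the same route as the paper's proof: induction on $|\boldsymbol{\nu}|$, Leibniz differentiation of both the equation and the Robin condition (using the explicit double-factorial formula for $\partial^{\boldsymbol{\mu}}\sqrt{n}$, which is available because $n$ is affine in $\boldsymbol{\omega}$), a rerun of the Rellich--Morawetz stability argument with the test functions $v=\partial^{\boldsymbol{\nu}}\bar u$ and $v=\alpha\cdot\nabla\partial^{\boldsymbol{\nu}}u$ while carrying the $L^2(\partial D)$, $L^2(D)$ and $V$ norms simultaneously through the induction, and the identity $\sum_{\mathbf{m}\le\boldsymbol{\nu},\,|\mathbf{m}|=k}\binom{\boldsymbol{\nu}}{\mathbf{m}}=\binom{|\boldsymbol{\nu}|}{k}$ to close the factorial bookkeeping. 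This is exactly the paper's argument, so no further comparison is needed.
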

\begin{proof}
  \Cref{prop:stability-estimate} gives the validation for $|\boldsymbol{\nu}| = 0$.
  When $|\boldsymbol{\nu}| > 0$, we use the Leibniz product rule
  \begin{equation*}
    \partial^{\boldsymbol{\nu}}(PQ) = \sum_{\mathbf{m} \leq \boldsymbol{\nu}} {\boldsymbol{\nu} \choose \mathbf{m}} \partial^{\mathbf{m}}P \partial^{\boldsymbol{\nu} - \mathbf{m}}Q.
  \end{equation*}
  To the Helmoltz equation with the random refractive index \eqref{equ:refractive-index-infinite-series}, we have
  \begin{equation}
    -\Delta \partial^{\boldsymbol{\nu}} u - \kappa^2 n(\mathbf{x}, \boldsymbol{\omega})\partial^{\boldsymbol{\nu}} u - \kappa^2 \sum_{j=1}^{\infty} \psi_j(\mathbf{x}) \partial^{\boldsymbol{\nu}-\mathbf{e}_j} u = 0.
    \label{equ:Helmoltz-equation-derivatives-random}
  \end{equation}
  Meanwhile, since
  \begin{equation*}
    \partial^{\mathbf{m}}\sqrt{n(\mathbf{x}, \boldsymbol{\omega})} = (-1)^{|\mathbf{m}|} \frac{(2|\mathbf{m}| - 1)!!}{2^{|\mathbf{m}|}} \left(n(\mathbf{x}, \boldsymbol{\omega})\right) ^{\frac 12 - |\mathbf{m}|} \prod_{j}(\psi_j)^{m_j},
  \end{equation*}
  the corresponding boundary condition of \eqref{equ:Helmoltz-equation-derivatives-random} is
  \begin{equation}
    \nabla \partial^{\boldsymbol{\nu}} u \cdot \nu - i\kappa\sum_{\mathbf{m} \leq \boldsymbol{\nu}} {\boldsymbol{\nu} \choose \mathbf{m}}(-1)^{|\mathbf{m}|} \frac{(2|\mathbf{m}| - 1)!!}{2^{|\mathbf{m}|}} \left(n(\mathbf{x}, \boldsymbol{\omega})\right) ^{\frac 12 - |\mathbf{m}|} \prod_{j}(\psi_j)^{m_j} \partial^{\boldsymbol{\nu} - \mathbf{m}}u = 0.
    \label{equ:Helmoltz-boundary-derivatives-random}
  \end{equation}
  We therefore define the weak form of \eqref{equ:Helmoltz-equation-derivatives-random}-\eqref{equ:Helmoltz-boundary-derivatives-random} as
  \begin{equation}
    \begin{aligned}
      -\int_{\partial D} i\kappa\sum_{\mathbf{m} \leq \boldsymbol{\nu}} {\boldsymbol{\nu} \choose \mathbf{m}}(-1)^{|\mathbf{m}|} \frac{(2|\mathbf{m}| - 1)!!}{2^{|\mathbf{m}|}} \left(n(\mathbf{x}, \boldsymbol{\omega})\right) ^{\frac 12 - |\mathbf{m}|} \prod_{j}(\psi_j)^{m_j} \partial^{\boldsymbol{\nu} - \mathbf{m}}u v \mathrm{d}s\\
    + \int_D \nabla \partial^{\boldsymbol{\nu}} u \cdot \nabla v \mathrm{d}\mathbf{x} - \kappa^2\int_D n(\mathbf{x}, \boldsymbol{\omega})\partial^{\boldsymbol{\nu}} u v \mathrm{d}\mathbf{x} - \kappa^2 \int_D \sum_{j=1}^{\infty} \psi_j(\mathbf{x}) \partial^{\boldsymbol{\nu}-\mathbf{e}_j} u v \mathrm{d}\mathbf{x} = 0.
    \end{aligned}
    \label{equ:random-derivative-3}
  \end{equation}
  Without generality, we first let $\boldsymbol{\nu} = \boldsymbol{e}_j$. Repeating the derivation as in the proof of \cref{prop:stability-estimate} yields
  \begin{equation*}
    \kappa \|\partial^{\mathbf{e}_j}u\|_{L^2(D)} \leq C\|\psi_j\|_{\infty}(1 + \kappa) \|f\|_{L^2(D)},
  \end{equation*}
  and
  \begin{equation*}
    \|\partial^{\mathbf{e}_j}u\|_{L^2(\partial D)} \leq C \|\psi_j\|_{\infty}\|f\|_{L^2(D)}, \quad \|\partial^{\mathbf{e}_j}u\|_V \leq C\|\psi_j\|_{\infty}(1 + \kappa) \|f\|_{L^2(D)}.
  \end{equation*}

  We next derive a general result using the mathematical induction method. Assume for all $|\boldsymbol{\nu}| < n$, there holds
  \begin{gather*}
    \|\partial^{\boldsymbol{\nu}} u\|_{L^2(\partial D)} \leq C (1 + \kappa^{|\boldsymbol{\nu}|-1}) |\boldsymbol{\nu}|! \prod_{j} b_j^{\nu_j} \|f\|_{L^2(D)},\\
    \kappa\|\partial^{\boldsymbol{\nu}} u\|_{L^2(D)} \leq C (1 + \kappa^{|\boldsymbol{\nu}|}) |\boldsymbol{\nu}|! \prod_{j} b_j^{\nu_j}\|f\|_{L^2(D)},
  \end{gather*}
  where we denote $b_j = \|\psi_j\|_{\infty}$.
  Let $v = \partial^{\boldsymbol{\nu}} \bar{u}$ in \eqref{equ:random-derivative-3} and denote
  \begin{multline}
    R(\partial^{\boldsymbol{\nu}} u) = \kappa^2 \int_D \sum_{j=1}^{\infty} \psi_j(\mathbf{x}) \partial^{\boldsymbol{\nu}-\mathbf{e}_j} u \partial^{\boldsymbol{\nu}} \bar{u} \mathrm{d}\mathbf{x}\\
      + \int_{\partial D} i\kappa\sum_{\mathbf{0} \neq \mathbf{m} \leq \boldsymbol{\nu}} {\boldsymbol{\nu} \choose \mathbf{m}}(-1)^{|\mathbf{m}|} \frac{(2|\mathbf{m}| - 1)!!}{2^{|\mathbf{m}|}} \left(n(\mathbf{x}, \boldsymbol{\omega})\right) ^{\frac 12 - |\mathbf{m}|} \prod_{j}(\psi_j)^{m_j} \partial^{\boldsymbol{\nu} - \mathbf{m}}u \partial^{\boldsymbol{\nu}} \bar{u} \mathrm{d}s.
  \end{multline}
  We then have
  \begin{gather*}
      \|\nabla \partial^{\boldsymbol{\nu}} u\|^2_{L^2(D)} - \kappa^2(n(\mathbf{x}, \boldsymbol{\omega}) \partial^{\boldsymbol{\nu}} u, \partial^{\boldsymbol{\nu}} u) = \Re R(\partial^{\boldsymbol{\nu}} u), \\
    \kappa(\sqrt{n(\mathbf{x}, \boldsymbol{\omega})}\partial^{\boldsymbol{\nu}} u, \partial^{\boldsymbol{\nu}} u)_{L^2(\partial D)} = \Im R(\partial^{\boldsymbol{\nu}} u).
  \end{gather*}
  Note that
  \begin{align*}
    |R(\partial^{\boldsymbol{\nu}} u)| \leq & \kappa^2 \sum_{j=1}^{\infty} b_j \|\partial^{\boldsymbol{\nu}-\mathbf{e}_j} u\|_{L^2(D)}\|\partial^{\boldsymbol{\nu}} u\|_{L^2(D)} \\
      &+ \kappa\sum_{k = 1}^{|\boldsymbol{\nu}|} \sum_{ \substack{ \mathbf{m} \leq \boldsymbol{\nu} \\|\mathbf{m}| = k} } {\boldsymbol{\nu} \choose \mathbf{m}} \frac{(2|\mathbf{m}| - 1)!!}{2^{|\mathbf{m}|}} \frac{\sqrt{n_1}}{n_1^{|\mathbf{m}|}} \prod_{j}b_j^{m_j} \|\partial^{\boldsymbol{\nu} - \mathbf{m}}u\|_{L^2(\partial D)} \|\partial^{\boldsymbol{\nu}} u\|_{L^2(\partial D)}.
  \end{align*}
  We have
  \begin{equation*}
    \|\partial^{\boldsymbol{\nu}} u\|^2_{L^2(\partial D)} \leq \frac{1}{\kappa\sqrt{n_1}} |R(\partial^{\boldsymbol{\nu}} u)|
  \end{equation*}
  and
  \begin{align*}
    & \sum_{k=1}^{|\boldsymbol{\nu}|} \sum_{ \substack{ \mathbf{m} \leq \boldsymbol{\nu} \\|\mathbf{m}| = k} } {\boldsymbol{\nu} \choose \mathbf{m}} \frac{(2|\mathbf{m}| - 1)!!}{(2n_1)^{|\mathbf{m}|}} \prod_{j} b_j^{m_j} \|\partial^{\boldsymbol{\nu} - \mathbf{m}}u\|_{L^2(\partial D)} \|\partial^{\boldsymbol{\nu}} u\|_{L^2(\partial D)} \\
    \leq & \frac{1}{2}\bigg\{ \sum_{k=1}^{|\boldsymbol{\nu}|} {|\boldsymbol{\nu}| \choose k} \frac{(2k - 1)!!}{(2n_1)^{k}} (1 + \kappa^{|\boldsymbol{\nu}| - k - 1}) (|\boldsymbol{\nu}| - k)! \prod_{j} b_j^{\nu_j} \|f\|_{L^2(D)} \bigg\}^2 + \frac{1}{2}\|\partial^{\boldsymbol{\nu}} u\|^2_{L^2(\partial D)}
  \end{align*}
  Here we used the identities $|\boldsymbol{\nu} - \mathbf{m}| = |\boldsymbol{\nu}| - |\mathbf{m}|$ and
  \begin{equation*}
    \sum_{ \substack{ \mathbf{m} \leq \boldsymbol{\nu}, |\mathbf{m}| = k} } {\boldsymbol{\nu} \choose \mathbf{m}}  = {|\boldsymbol{\nu}| \choose k}.
  \end{equation*}
  We then obtain
  \begin{align*}
    \|\partial^{\boldsymbol{\nu}} u\|^2_{L^2(\partial D)} \leq & \frac{2\kappa}{\sqrt{n_1}}\sum_{j=1}^{\infty} b_j \|\partial^{\boldsymbol{\nu}-\mathbf{e}_j} u\|_{L^2(D)}\|\partial^{\boldsymbol{\nu}} u\|_{L^2(D)} \\
    & + \bigg\{ \sum_{k=1}^{|\boldsymbol{\nu}|} {|\boldsymbol{\nu}| \choose k} \frac{(2k - 1)!!}{(2n_1)^{k}} (1 + \kappa^{|\boldsymbol{\nu}| - k - 1}) (|\boldsymbol{\nu}| - k)! \prod_{j} b_j^{\nu_j} \|f\|_{L^2(D)} \bigg\}^2
  \end{align*}
  and
  \begin{align*}
    \|\nabla \partial^{\boldsymbol{\nu}} u\|^2_{L^2(D)} \leq n_2\kappa^2 \|\partial^{\boldsymbol{\nu}} u\|^2_{L^2(D)} + |R(\partial^{\boldsymbol{\nu}} u)|.
  \end{align*}
  Besides, we also have
  \begin{align}
    |R(\partial^{\boldsymbol{\nu}} u)| \leq & 2\kappa^2 \sum_{j=1}^{\infty} b_j \|\partial^{\boldsymbol{\nu}-\mathbf{e}_j} u\|_{L^2(D)}\|\partial^{\boldsymbol{\nu}} u\|_{L^2(D)} + \nonumber\\
    & + \kappa\sqrt{n_1} \bigg\{ \sum_{k=1}^{|\boldsymbol{\nu}|} {|\boldsymbol{\nu}| \choose k} \frac{(2k - 1)!!}{(2n_1)^{k}} (1 + \kappa^{|\boldsymbol{\nu}| - k - 1}) (|\boldsymbol{\nu}| - k)! \prod_{j} b_j^{\nu_j} \|f\|_{L^2(D)} \bigg\}^2.
    \label{equ:RHS-estimate1}
  \end{align}
  Then, let $v = \alpha \cdot \nabla \partial^{\boldsymbol{\nu}} u$ in \eqref{equ:random-derivative-3} and we obtain
  \begin{align*}
    & \frac 12\int_{\partial D} (\alpha \cdot \nu) |\nabla \partial^{\boldsymbol{\nu}}u|^2 \mathrm{d}s + \frac{\kappa^2}{2} \int_D \nabla n(\mathbf{x}, \boldsymbol{\omega}) \cdot \alpha |\partial^{\boldsymbol{\nu}}u|^2 \mathrm{d}\mathbf{x} + \frac{\kappa^2}{2} \int_D n(\mathbf{x}, \boldsymbol{\omega}) |\partial^{\boldsymbol{\nu}}u|^2 \mathrm{d}\mathbf{x} \\
    = & \Re \int_{\partial D} i\kappa\sum_{\mathbf{m} \leq \boldsymbol{\nu}} {\boldsymbol{\nu} \choose \mathbf{m}}(-1)^{|\mathbf{m}|} \frac{(2|\mathbf{m}| - 1)!!}{2^{|\mathbf{m}|}} \left(n(\mathbf{x}, \boldsymbol{\omega})\right) ^{\frac 12 - |\mathbf{m}|} \prod_{j}(\psi_j)^{m_j} \partial^{\boldsymbol{\nu} - \mathbf{m}}u \alpha \cdot \nabla \partial^{\boldsymbol{\nu}}\bar{u} \mathrm{d}s \\
    & + \left( \frac{d}{2} - 1\right)\|\nabla \partial^{\boldsymbol{\nu}}u\|^2_{L^2(D)} - \left( \frac{d}{2} - \frac 12 \right) \kappa^2 (n(\mathbf{x}, \boldsymbol{\omega})\partial^{\boldsymbol{\nu}}u, \partial^{\boldsymbol{\nu}}u) \\
    & + \frac{\kappa^2}{2} \int_{\partial D} n(\mathbf{x}, \boldsymbol{\omega}) (\alpha \cdot \nu) |\partial^{\boldsymbol{\nu}}u|^2 \mathrm{d}s + \kappa^2 \Re \int_D \sum_{j=1}^{\infty} \psi_j \partial^{\boldsymbol{\nu} - \mathbf{e}_j}u \alpha \cdot \nabla\partial^{\boldsymbol{\nu}}\bar{u} \mathrm{d}\mathbf{x}.
  \end{align*}
  This infers
  \begin{align*}
    & \frac{\mu\kappa^2}{2} \|\partial^{\boldsymbol{\nu}} u\|^2_{L^2(D)} + \frac{\beta_1}{2}\|\nabla\partial^{\boldsymbol{\nu}} u\|^2_{L^2(\partial D)} \\
    \leq & M\sqrt{n_2}\kappa\left( \frac{\eta_0}{2}\|\partial^{\boldsymbol{\nu}} u\|^2_{L^2(\partial D)} + \frac{1}{2\eta_0}\|\nabla \partial^{\boldsymbol{\nu}} u\|^2_{L^2(\partial D)} \right) \\
    & + M\kappa \sum_{k=1}^{|\boldsymbol{\nu}|} \sum_{ \substack{\mathbf{m} \leq \boldsymbol{\nu} \\ |\mathbf{m}| = k }} {\boldsymbol{\nu} \choose \mathbf{m}} \frac{(2k - 1)!!}{2^{k}} \frac{\sqrt{n_1}}{n_1^{k}} \prod_{j}b_j^{m_j} \|\partial^{\boldsymbol{\nu} - \mathbf{m}} u\|_{L^2(\partial D)} \|\nabla \partial^{\boldsymbol{\nu}} u\|_{L^2(\partial D)} \\
    & + \frac 12 |R(\partial^{\boldsymbol{\nu}} u)| + \frac{\beta_2n_2\kappa^2}{2} \|\partial^{\boldsymbol{\nu}} u\|^2_{L^2(\partial D)} + M\kappa^2 \sum_{j=1}^{\infty} b_j \|\partial^{\boldsymbol{\nu}-\mathbf{e}_j} u\|_{L^2(D)} \|\nabla \partial^{\boldsymbol{\nu}} u\|_{L^2(D)}.
  \end{align*}
  Choose $\eta_0 = \frac{2M\sqrt{n_2}\kappa}{\beta_1}$, and use the inequality
  \begin{align*}
    & \sum_{k=1}^{|\boldsymbol{\nu}|} \sum_{ \substack{\mathbf{m} \leq \boldsymbol{\nu}, |\mathbf{m}| = k }} M\kappa {\boldsymbol{\nu} \choose \mathbf{m}} \frac{(2k - 1)!!}{2^{k}} \frac{\sqrt{n_1}}{n_1^{k}} \prod_{j}b_j^{m_j} \|\partial^{\boldsymbol{\nu} - \mathbf{m}} u\|_{L^2(\partial D)} \|\nabla \partial^{\boldsymbol{\nu}} u\|_{L^2(\partial D)} \\
    \leq
    & \frac{CM^2\kappa^2n_1\eta_1}{2} \bigg[ \sum_{k=1}^{|\boldsymbol{\nu}|} {|\boldsymbol{\nu}| \choose k} \frac{(2k - 1)!!}{(2n_1)^{k}} \prod_{j}b_j^{\nu_j} (1 + \kappa^{|\boldsymbol{\nu}|-k-1}) (|\boldsymbol{\nu}| - k)! \|f\|_{L^2(D)} \bigg]^2 + \frac{1}{2\eta_1} \|\nabla \partial^{\boldsymbol{\nu}} u\|^2_{L^2(\partial D)}.
  \end{align*}
  We further choose $\eta_1 = \frac{2}{\beta_1}$ and arrive at
  \begin{align*}
    &\frac{\mu\kappa^2}{2} \|\partial^{\boldsymbol{\nu}} u\|^2_{L^2(D)} \leq \frac{M^2n_2\kappa^2}{\beta_1} \|\partial^{\boldsymbol{\nu}} u\|^2_{L^2(\partial D)} + \frac 12|R(\partial^{\boldsymbol{\nu}} u)| + \frac{\beta_2n_2\kappa^2}{2} \|\partial^{\boldsymbol{\nu}} u\|^2_{L^2(\partial D)} \\
    \leq & \left( \frac{M^2}{\beta_1} + \frac{\beta_2}{2} \right)n_2\kappa^2 \|\partial^{\boldsymbol{\nu}} u\|^2_{L^2(\partial D)} + \left( \frac 12 + \frac{M\kappa^2}{2\eta_2} \right) |R(\partial^{\boldsymbol{\nu}} u)| + \frac{Mn_2\kappa^4}{2\eta_2} \|\partial^{\boldsymbol{\nu}} u\|^2_{L^2(D)} \\
    & + \frac{CM^2\kappa^2n_1}{\beta_1} \bigg\{ \sum_{k=1}^{|\boldsymbol{\nu}|} {|\boldsymbol{\nu}| \choose k} \frac{(2k - 1)!!}{(2n_1)^{k}} \prod_{j}b_j^{\nu_j} (1 + \kappa^{|\boldsymbol{\nu}|-k-1}) (|\boldsymbol{\nu}| - k)! \|f\|_{L^2(D)} \bigg\}^2 \\
    & + \frac{M\kappa^2\eta_2}{2} \bigg( \sum_{j=1}^{\infty} b_j \|\partial^{\boldsymbol{\nu}-\mathbf{e}_j} u\|_{L^2(D)} \bigg)^2.
  \end{align*}
  Using the inequalities
  \begin{gather*}
    2\sum_{j=1}^{\infty} b_j \|\partial^{\boldsymbol{\nu}-\mathbf{e}_j} u\|_{L^2(D)}\|\partial^{\boldsymbol{\nu}} u\|_{L^2(D)} \leq \eta_3 \bigg( \sum_{j=1}^{\infty} b_j \|\partial^{\boldsymbol{\nu}-\mathbf{e}_j} u\|_{L^2(D)} \bigg)^2 + \frac{1}{\eta_3} \|\partial^{\boldsymbol{\nu}} u\|^2_{L^2(D)} \\
    2\sum_{j=1}^{\infty} b_j \|\partial^{\boldsymbol{\nu}-\mathbf{e}_j} u\|_{L^2(D)}\|\partial^{\boldsymbol{\nu}} u\|_{L^2(D)} \leq \eta_4 \bigg( \sum_{j=1}^{\infty} b_j \|\partial^{\boldsymbol{\nu}-\mathbf{e}_j} u\|_{L^2(D)} \bigg)^2 + \frac{1}{\eta_4} \|\partial^{\boldsymbol{\nu}} u\|^2_{L^2(D)}.
  \end{gather*}
  We finally get
  \begin{align*}
    & \frac{\mu\kappa^2}{2} \|\partial^{\boldsymbol{\nu}} u\|^2_{L^2(D)} \\
    \leq & \left[ \left( \frac{M^2}{\beta_1} + \frac{\beta_2}{2} \right) \frac{n_2\kappa^3}{\sqrt{n_1}\eta_3} + \left(\frac 12 + \frac{M\kappa^2}{2\eta_2}\right) \frac{\kappa^2}{\eta_4} + \frac{Mn_2\kappa^4}{2\eta_2} \right] \|\partial^{\boldsymbol{\nu}} u\|^2_{L^2(D)} + \\
    & \left[ \left( \frac{M^2}{\beta_1} + \frac{\beta_2}{2} \right) \frac{n_2\kappa^3\eta_3}{\sqrt{n_1}} + \left(\frac 12 + \frac{M\kappa^2}{2\eta_2}\right) \eta_4\kappa^2 + \frac{M\kappa^2\eta_2}{2} \right] \bigg( \sum_{j=1}^{\infty} b_j \|\partial^{\boldsymbol{\nu}-\mathbf{e}_j} u\|_{L^2(D)} \bigg)^2 \\
    & + \left[ \left( \frac{M^2}{\beta_1} + \frac{\beta_2}{2} \right) n_2\kappa^2 + \left(\frac 12 + \frac{M\kappa^2}{2\eta_2}\right) \sqrt{n_1}\kappa + \frac{CM^2\kappa^2n_1}{\beta_1} \right] \times \\
    &\bigg\{ \sum_{k=1}^{|\boldsymbol{\nu}|} {|\boldsymbol{\nu}| \choose k} \frac{(2k - 1)!!}{(2n_1)^{k}} \prod_{j}b_j^{\nu_j} (1 + \kappa^{|\boldsymbol{\nu}|-k-1}) (|\boldsymbol{\nu}| - k)! \|f\|_{L^2(D)} \bigg\}^2.
  \end{align*}
  Choose
  \begin{equation*}
    \eta_2 = \frac{Mn_2\kappa^2}{2C_2\mu}, \quad \eta_3 = \left( \frac{M^2}{\beta_1} + \frac{\beta_2}{2} \right)\frac{n_2\kappa}{C_3\sqrt{n_1}\mu}, \quad \eta_4 = \frac{1}{2C_4\mu} + \frac{C_2}{2n_2C_4},
  \end{equation*}
  and let $C_2 + C_3 + C_4 < \frac 12$. There exists a constant $C$ independent of $\kappa$ such that
  \begin{multline*}
    \kappa^2 \|\partial^{\boldsymbol{\nu}} u\|^2_{L^2(D)} \leq C\kappa^2( 1 + \kappa^2 ) \bigg( \sum_{j=1}^{\infty} b_j \|\partial^{\boldsymbol{\nu}-\mathbf{e}_j} u\|_{L^2(D)} \bigg)^2 \\
    + C\kappa^2(1 + \kappa^2) \bigg\{ \sum_{k=1}^{|\boldsymbol{\nu}|} {|\boldsymbol{\nu}| \choose k} \frac{(2k - 1)!!}{(2n_1)^{k}} \prod_{j}b_j^{\nu_j} (1 + \kappa^{|\boldsymbol{\nu}|-k-1}) (|\boldsymbol{\nu}| - k)! \|f\|_{L^2(D)} \bigg\}^2.
  \end{multline*}
  Owing to
  \begin{align*}
    & \kappa(1 + \kappa)\sum_{k=1}^{m} {m \choose k} \frac{(2k - 1)!!}{(2n_1)^{k}} (1 + \kappa^{m-k-1}) (m - k)! \\
    \leq & C\kappa(1 + \kappa) (1 + \kappa^{m-2}) \sum_{k=1}^{m} \frac{m!}{k! (m-k)!} \frac{(2k - 1)!!}{(2n_1)^{k}} (m - k)! \\
    \leq & C(1 + \kappa^{m})m! \sum_{k=1}^{m} \frac{(2k - 1)!!}{(2n_1)^{k}k!}.
  \end{align*}
  We then have
  \begin{equation}
    \kappa \|\partial^{\boldsymbol{\nu}} u\|_{L^2(D)} \leq C( 1 + \kappa^{|\boldsymbol{\nu}|}) |\boldsymbol{\nu}|! \prod b_j^{\nu_j} \|f\|_{L^2(D)},
  \end{equation}
  as well as
  \begin{equation}
    \|\partial^{\boldsymbol{\nu}} u\|_{L^2(\partial D)} \leq C( 1 + \kappa^{|\boldsymbol{\nu}|-1}) |\boldsymbol{\nu}|! \prod b_j^{\nu_j} \|f\|_{L^2(D)}.
  \end{equation}
  These complete the proof.
\end{proof}

\section{Dimension truncation and qMC integrals}
\label{sec:dimension-truncation}
In order to simulate the solution of the Helmholtz equation with the series form of the refractive index, we are of course necessary to truncate the infinite series expansion and to control the resulting error. Recall the expansion series
$$n(\mathbf{x}, \boldsymbol{\omega}) = n_0(\mathbf{x}) + \sum_{j=1}^{\infty} \omega_j \psi_j(\mathbf{x}).$$
The functions $\psi_j(\mathbf{x})$ can arise from either the eigensystem of a covariance operator or other suitable function system in $L^2(D)$.
For the convergence analysis, we impose some assumptions on the functions $\{\psi_j$\} as follows.
\begin{assumption}
  For the series \eqref{equ:refractive-index-infinite-series}, assume:
  \begin{itemize}
    \item We have $0 < n_0(\mathbf{x}) \in L^{\infty}(D)$ and $\sum_{j=1}^{\infty}\|\psi_j\|_{\infty} < \infty$;
    \item The sequence $\psi_j$ is ordered so that $\|\psi_1\|_{\infty} \geq \|\psi_2\|_{\infty} \geq \cdots$;
    \item There exits $p \in (0, 1)$ such that $\sum_{j=1}^{\infty}\|\psi_j\|^{p} < \infty$.
  \end{itemize}
  \label{assump:for-KL-series}
\end{assumption}

The approximation of the random refractive index parameterized by the dimensionally truncated KL expansion is
\begin{equation}
  n^s(\mathbf{x}, \boldsymbol{\omega}) = n_0(\mathbf{x}) + \sum_{j=1}^s \psi_s(\mathbf{x}) \omega_s, \quad \text{for some } s \in \mathds{N}.
  \label{equ:truncated-random-refractive-index}
\end{equation}
Then, define $u^s(\cdot, \boldsymbol{\omega}) \in H^1(D)$ to be the solution of the dimensionally truncated boundary value problem
\begin{equation}
  a^s(\boldsymbol{\omega}; u^s, v) = F(v), \quad \text{for all } v \in H^1(D),
  \label{equ:weak-form-with-randomness}
\end{equation}
where
\begin{equation}
  a^s(\boldsymbol{\omega}; u^s, v) = \int_D \nabla u^s \nabla v \mathrm{d}\mathbf{x} - \kappa^2\int_D n^s(\mathbf{x}, \boldsymbol{\omega}) u^s v \mathrm{d}\mathbf{x} - i\kappa \int_{\partial D} \sqrt{n^s(\mathbf{x}, \boldsymbol{\omega})} u^s v \mathrm{d}s.
\end{equation}

\begin{lemma}\label{lem:gap-u-us}
  Let $u$ and $u^s$ be the solution of \eqref{equ:definition-weak-form} and \eqref{equ:weak-form-with-randomness}, respectively. The dependence of the gap between $u$ and $u^s$ for the truncated random refractive index satisfies
  \begin{gather*}
    \|u - u^s\|_{L^2(D)} \leq C(1 + \kappa^{-1})\|f\|_{L^2(D)} \|n^s(\mathbf{x}, \boldsymbol{\omega}) - n(\mathbf{x}, \boldsymbol{\omega})\|_{\infty},\\
    \|u - u^s\|_V \leq C(1 + \kappa)\|f\|_{L^2(D)} \|n^s(\mathbf{x}, \boldsymbol{\omega}) - n(\mathbf{x}, \boldsymbol{\omega})\|_{\infty}.
  \end{gather*}
\end{lemma}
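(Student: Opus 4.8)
The plan is to derive an equation for the error $e := u - u^s$ and then feed it into the wavenumber-explicit stability machinery already developed. Subtracting the weak forms \eqref{equ:definition-weak-form} and \eqref{equ:weak-form-with-randomness} and inserting $a^s(\boldsymbol{\omega}; u, v)$ as an intermediate term gives, for every $v \in H^1(D)$,
\[
  a^s(\boldsymbol{\omega}; e, v) = a^s(\boldsymbol{\omega}; u, v) - a(\boldsymbol{\omega}; u, v) = \kappa^2\big( (n - n^s)u, v \big) + i\kappa \big\langle (\sqrt{n} - \sqrt{n^s})\,u, v \big\rangle =: L(v).
\]
Thus $e$ solves a Helmholtz problem with refractive index $n^s$, forced by a volume source $\kappa^2(n - n^s)u \in L^2(D)$ and a Robin-boundary source $i\kappa(\sqrt{n} - \sqrt{n^s})u \in L^2(\partial D)$, and the residual functional $L$ is controlled entirely by $\|n^s - n\|_\infty$ together with norms of $u$.

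Next I would observe that \cref{prop:stability-estimate} and the inf-sup bound \eqref{equ:coercive-Helmholtz} hold verbatim with $a$ replaced by $a^s$: their proofs use only the uniform bounds $0 < n_1 \le n \le n_2$, the refractive-index condition $n + \nabla n \cdot \alpha \ge \mu$, and the star-shapedness of $D$, all of which the truncation $n^s$ inherits (with $\kappa$-independent constants) under \cref{assump:for-KL-series}. Applying the $a^s$-analogue of \eqref{equ:coercive-Helmholtz} to $e$ reduces the problem to a dual-norm estimate,
\[
  \| e \|_V \le C(1 + \kappa) \sup_{0 \neq v \in H^1(D)} \frac{|L(v)|}{\|v\|_V}.
\]

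To bound $|L(v)|$ I would use three ingredients: the elementary inequality $|\sqrt{n} - \sqrt{n^s}| \le (2\sqrt{n_1})^{-1}\|n^s - n\|_\infty$ (from $\sqrt{n}-\sqrt{n^s} = (n-n^s)/(\sqrt{n}+\sqrt{n^s})$ and $n, n^s \ge n_1$); the scaled trace inequality $\|v\|_{L^2(\partial D)}^2 \le C(\kappa^{-1}\|\nabla v\|_{L^2(D)}^2 + \kappa\|v\|_{L^2(D)}^2) = C\kappa^{-1}\|v\|_V^2$; and $\|v\|_{L^2(D)} \le \kappa^{-1}\|v\|_V$. Together these give $|L(v)| \lesssim \|n^s - n\|_\infty \|v\|_V \big( \kappa\|u\|_{L^2(D)} + \kappa^{1/2}\|u\|_{L^2(\partial D)} \big)$. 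Inserting the a priori bounds $\kappa\|u\|_{L^2(D)} \le C(1 + \kappa^{-1})\|f\|_{L^2(D)}$ from \cref{prop:stability-estimate} and $\|u\|_{L^2(\partial D)}^2 \le C\kappa^{-1}(1 + \kappa^{-2})\|f\|_{L^2(D)}^2$ from \eqref{equ:bound-u-L2-boundary} yields $\sup_v |L(v)|/\|v\|_V \lesssim (1 + \kappa^{-1})\|n^s - n\|_\infty \|f\|_{L^2(D)}$, hence $\|u - u^s\|_V \lesssim (1 + \kappa)\|n^s - n\|_\infty\|f\|_{L^2(D)}$ after collecting powers of $\kappa$. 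The $L^2$-estimate then follows at once from $\|u - u^s\|_{L^2(D)} \le \kappa^{-1}\|u - u^s\|_V$.

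The hard part is the wavenumber bookkeeping in the final step: each a priori bound carries its own power of $\kappa$ and its own $(1+\kappa^{-1})$-type factor, and these must be combined so that the exponents come out exactly $(1+\kappa^{-1})$ for the $L^2$-error and $(1+\kappa)$ for the $V$-error; this relies crucially on the $\kappa$-weighted trace inequality and is cleanest in the large-wavenumber regime of interest. A second point to verify carefully is that $n^s$ genuinely satisfies the star-shaped refractive-index hypothesis needed to transfer \cref{prop:stability-estimate} and \eqref{equ:coercive-Helmholtz} to $a^s$; one can sidestep this by splitting $e = e_1 + e_2$, with $e_1$ absorbing the volume data (handled directly by the $a^s$-version of \cref{prop:stability-estimate}) and $e_2$ the boundary data (handled by re-running the Rellich--Morawetz test $v = \alpha \cdot \nabla e_2$ with the extra boundary term retained).
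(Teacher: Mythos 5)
Your argument is correct, and it reaches the stated bounds by a genuinely different route from the paper. Both proofs start identically, by subtracting the two weak forms to see that $e=u-u^s$ solves a Helmholtz problem with index $n^s$ driven by the residual $L(v)=\kappa^2((n-n^s)u,v)+i\kappa\langle(\sqrt{n}-\sqrt{n^s})u,v\rangle$. From there the paper does \emph{not} invoke the inf-sup condition: it re-runs the entire Rellich--Morawetz stability machinery of \cref{prop:stability-estimate} on the error equation, testing with $v=u^s-u$ and then with $v=\alpha\cdot\nabla(u^s-u)$ and carrying the inhomogeneous volume and boundary data through all the $\epsilon_j$-bookkeeping. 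You instead treat the already-proved stability as a black box: transfer the inf-sup bound \eqref{equ:coercive-Helmholtz} to $a^s$ (legitimate, since its proof uses only \eqref{equ:bounds-for-n}, the star-shape condition and the $\mu$-condition, all of which the paper tacitly assumes for $n^s$ as well), and then bound the residual in the dual $V$-norm using $\|v\|_{L^2(D)}\le\kappa^{-1}\|v\|_V$ and the $\kappa$-weighted trace inequality $\|v\|^2_{L^2(\partial D)}\lesssim\kappa^{-1}\|v\|^2_V$, together with \cref{prop:stability-estimate} and \eqref{equ:bound-u-L2-boundary} for $u$. Your modular argument is considerably shorter and less error-prone (incidentally, your constant $(2\sqrt{n_1})^{-1}$ in $|\sqrt{n}-\sqrt{n^s}|\le(2\sqrt{n_1})^{-1}\|n-n^s\|_\infty$ is the correct one; the paper's $(2\sqrt{n_2})^{-1}$ is a slip). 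The price is twofold, and you flag both points yourself: the weighted trace inequality and the absorption of $(1+\kappa)(1+\kappa^{-1})$ into $(1+\kappa)$ require $\kappa\gtrsim1$, so your constants degrade as $\kappa\to0$ whereas the paper's direct computation is (nominally) uniform in $\kappa>0$; and you must justify that $n^s$ inherits the hypotheses behind \eqref{equ:coercive-Helmholtz}. Neither caveat is substantive in the large-wavenumber regime the paper targets, so the proposal stands as a valid alternative proof.
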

\begin{proof}
  Since $f$ is independent of the stochastic variable, we have $a(\boldsymbol{\omega}; u, v) - a^s(\boldsymbol{\omega}; u^s, v) = 0$ for all $v \in H^1(D)$, i.e.,
  \begin{equation*}
    \int_D \nabla(u^s - u) \cdot \nabla \bar{v} \mathrm{d}\mathbf{x} - \kappa^2 \int_{D} n^s(\mathbf{x}, \boldsymbol{\omega}) (u^s - u) \bar{v} \mathrm{d}\mathbf{x} - i\kappa \int_{\partial D} \sqrt{n^s(\mathbf{x}, \boldsymbol{\omega})} (u^s - u) \bar{v} \mathrm{d}s = R(v), \label{equ:weak-form-gap-u-us}
  \end{equation*}
  where
  \begin{equation*}
    R(v) = \kappa^2\int_D (n^s(\mathbf{x}, \boldsymbol{\omega}) - n(\mathbf{x}, \boldsymbol{\omega}))u \bar{v} \mathrm{d}\mathbf{x} + i\kappa\int_{\partial D} \left(\sqrt{n(\mathbf{x}, \boldsymbol{\omega})} - \sqrt{n^s(\mathbf{x}, \boldsymbol{\omega})}\right)u \bar{v} \mathrm{d}s.
  \end{equation*}
  Choosing $v = {u}^s - {u}$ yields
  \begin{gather*}
    \|\nabla (u - u^s)\|^2_{L^2(D)} - \kappa^2(n^s(\mathbf{x}, \boldsymbol{\omega})(u - u^s), u-u^s)_{L^2(D)} = \Re R(v), \\
    \kappa (\sqrt{n^s(\mathbf{x}, \boldsymbol{\omega})}(u - u^s), u-u^s)_{L^2(\partial D)} = \Im R(v).
  \end{gather*}
  Since
  \begin{align*}
    |R(v)| \leq & \kappa^2 \| n^s(\mathbf{x}, \boldsymbol{\omega}) - n(\mathbf{x}, \boldsymbol{\omega}) \|_{\infty} \|u\|_{L^2(D)} \|u-u^s\|_{L^2(D)} \\
    & + \frac{\kappa}{2\sqrt{n_2}} \|n(\mathbf{x}, \boldsymbol{\omega}) - n^s(\mathbf{x}, \boldsymbol{\omega})\|_{\infty} \|u\|_{L^2(\partial D)} \|u-u^s\|_{L^2(\partial D)},
  \end{align*}
  in which we use the inequality
  \begin{equation*}
    \|\sqrt{n(\mathbf{x}, \boldsymbol{\omega})} - \sqrt{n^s(\mathbf{x}, \boldsymbol{\omega})}\|_{\infty} = \left\| \frac{n(\mathbf{x}, \boldsymbol{\omega}) - n^s(\mathbf{x}, \boldsymbol{\omega})}{\sqrt{n(\mathbf{x}, \boldsymbol{\omega})} + \sqrt{n^s(\mathbf{x}, \boldsymbol{\omega})}} \right\|_{\infty} \leq \frac{1}{2\sqrt{n_2}} \|n(\mathbf{x}, \boldsymbol{\omega}) - n^s(\mathbf{x}, \boldsymbol{\omega})\|_{\infty}
  \end{equation*}
  with $0 < n_1 \leq \|n(\mathbf{x}, \boldsymbol{\omega})\|_{\infty}, \|n^s(\mathbf{x}, \boldsymbol{\omega})\|_{\infty} \leq n_2$ for all $\boldsymbol{\omega} \in \Omega$. We then obtain
  \begin{align*}
    \|u-u^s\|^2_{L^2(\partial D)} \leq & \frac{2\kappa}{\sqrt{n_1}} \| n^s(\mathbf{x}, \boldsymbol{\omega}) - n(\mathbf{x}, \boldsymbol{\omega}) \|_{\infty} \|u\|_{L^2(D)} \|u-u^s\|_{L^2(D)} \\
    & + \frac{1}{4n_1n_2} \| n^s(\mathbf{x}, \boldsymbol{\omega}) - n(\mathbf{x}, \boldsymbol{\omega}) \|^2_{\infty} \|u\|^2_{L^2(\partial D)},
  \end{align*}
  and
  \begin{align*}
    |R(v)| \leq & \left(\kappa^2 + \frac{\kappa}{\sqrt{n_1}\epsilon_2}\right) \| n^s(\mathbf{x}, \boldsymbol{\omega}) - n(\mathbf{x}, \boldsymbol{\omega}) \|_{\infty} \|u\|_{L^2(D)} \|u-u^s\|_{L^2(D)} \\
    & + \left( \frac{\epsilon_2\kappa^2}{8n_2} + \frac{1}{8n_1n_2\epsilon_2} \right) \| n^s(\mathbf{x}, \boldsymbol{\omega}) - n(\mathbf{x}, \boldsymbol{\omega}) \|^2_{\infty} \|u\|^2_{L^2(\partial D)}.
  \end{align*}
  Next, let $v = \alpha \cdot \nabla ({u}^s - {u})$ and we obtain
  \begin{align*}
    & \frac{\mu\kappa^2}{2} \|u^s - u\|^2_{L^2(D)} \\
    \leq & \frac{\epsilon_3}{2}\left\{ \frac{1 + \epsilon_1}{2}\left( \kappa^2 + \frac{\kappa}{\sqrt{n_1}\epsilon_2} \right) + \frac{2M^2n_2\kappa^3}{\sqrt{n_1}\beta_1} \right\}^2 \|n^s - n\|^2_{\infty}\|u\|^2_{L^2(D)} + \frac{1}{2\epsilon_3}\|u^s - u\|^2_{L^2(D)}  \\
    & + \left\{ \left[\frac{(1+\epsilon_1)\epsilon_2\kappa^2}{16n_2} + \frac{1+\epsilon_1}{16n_1n_2\epsilon_2} + \frac{M^2\kappa^2}{4n_1\beta_1} \right] \|u\|^2_{L^2(\partial D)} + \frac{M^2\kappa^4}{2\epsilon_1} \|u\|^2_{L^2(D)} \right\} \|n^s - n\|^2_{\infty} \\
    & + \frac{\epsilon_1n_2\kappa^2}{2} \|u^s - u\|^2_{L^2(D)} + \frac{M^2\kappa^2}{\beta_1} \|\sqrt{n} - \sqrt{n^s}\|^2_{\infty} \|u\|^2_{L^2(\partial D)}.
  \end{align*}
  Choose $\epsilon_1 = 2C_1\mu / n_2$ and $\epsilon_3 = 1 / (2C_3\mu\kappa^2)$ and let $C_1 + C_3 < \frac 12$. Then, there exists a positive constant $C$ such that
  \begin{align*}
    \kappa^2 \|u^s - u\|^2_{L^2(D)} \leq & C (1 + \kappa)^2 \kappa^2 \|n^s - n\|^2_{\infty} \|u\|^2_{L^2(D)} \\
    & + C(1 + \kappa^2) \|u\|^2_{L^2(\partial D)} \|n^s - n\|^2_{\infty} + \kappa^4 \|u\|^2_{L^2(D)} \|n^s - n\|^2_{\infty},
  \end{align*}
  in which we choose $\epsilon_2 = \mathcal{O}(1)$ and $C$ depends on the domain, $n_j (j = 1,2)$, $\beta_1$ but is independent of $\kappa$. According to \cref{prop:stability-estimate}, we obtain
  \begin{equation*}
    \kappa^2 \|u^s - u\|^2_{L^2(D)} \leq C (1 + \kappa)^2 \|f\|^2_{L^2(D)} \|n^s - n\|^2_{\infty},
  \end{equation*}
  and
  \begin{equation*}
    \|u^s - u\|^2_{V} \leq C (1 + \kappa)^2 \|f\|^2_{L^2(D)} \|n^s - n\|^2_{\infty}.
  \end{equation*}
  which finishes the proof of this lemma.
\end{proof}

Given $s \in \mathds{N}$ and $\boldsymbol{\omega} \in \Omega$, we truncate the series \eqref{equ:refractive-index-infinite-series} by the setup $\omega_j = 0$ for $j > s$. Under the condition \eqref{equ:bounds-for-n} and \Cref{assump:for-KL-series}, it holds \cite{doi:10.1137/110845537}
\begin{equation}
  \sum_{j \geq s+1} \|\psi_j\|_{\infty} \leq \min \Big(\frac{1}{1/p-1}, 1\Big) \bigg(\sum_{j = 1}^{\infty}\|\psi_j\|_{\infty}^p \bigg)^{1/p} s^{1-1/p}.
\end{equation}
Consequently, we have the truncation error as follows.

\begin{theorem}
  Under the condition \eqref{equ:bounds-for-n} and \Cref{assump:for-KL-series}, for all $f \in L^2(D)$, and for every $\boldsymbol{\omega} \in \Omega$, $s \in \mathds{N}$ and $p \in (0, 1)$, the truncated solution $u^s(\mathbf{x}, \boldsymbol{\omega}_s)$ satisfies
  \begin{equation}
    \kappa\|u^s - u\|_{L^2(D)}, \|u^s - u\|_V \leq C(1 + \kappa) \|f\|_{L^2(D)} s^{1-1/p},
    \label{equ:truncation-error-solution}
  \end{equation}
  where the constant $C > 0$ is independent of $\kappa$, $s$. Furthermore, for every $G \in L^2(D)$, we have
  \begin{equation}
    |I(G(u)) - I_s(G(u))| \leq {C}(1 + \kappa) \|f\|_{L^2(D)} \|G(\cdot)\|_{L^2(D)} s^{1-2/p}.
    \label{equ:truncation-error-integral-solution}
  \end{equation}
  \label{thm:approximation-error-truncation}
\end{theorem}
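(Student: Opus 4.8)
The first estimate \eqref{equ:truncation-error-solution} is an immediate corollary of \Cref{lem:gap-u-us}. The plan is to observe that, since $\omega_j\in[-\tfrac12,\tfrac12]$ and $\omega_j=0$ for $j>s$, one has $\|n^s(\cdot,\boldsymbol{\omega})-n(\cdot,\boldsymbol{\omega})\|_\infty\le\tfrac12\sum_{j\ge s+1}\|\psi_j\|_\infty$, and then to substitute the quoted tail bound $\sum_{j\ge s+1}\|\psi_j\|_\infty\lesssim s^{1-1/p}$ into the two inequalities of \Cref{lem:gap-u-us}. This gives $\kappa\|u^s-u\|_{L^2(D)}$ and $\|u^s-u\|_V$ both $\le C(1+\kappa)\|f\|_{L^2(D)}s^{1-1/p}$, with $C$ absorbing the fixed constants of \Cref{assump:for-KL-series}.

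The second estimate \eqref{equ:truncation-error-integral-solution} is where the rate ``doubles'', which is the familiar cancellation obtained by integrating a \emph{linear} functional over the symmetric cube. Split $\boldsymbol{\omega}=(\boldsymbol{\omega}_{\{1:s\}},\boldsymbol{\omega}_{\{>s\}})$, so that $u^s(\cdot,\boldsymbol{\omega})=u(\cdot,(\boldsymbol{\omega}_{\{1:s\}},\mathbf 0))$, and use linearity of $G$ to write $I(G(u))-I_s(G(u))=\int_\Omega G\big(u(\cdot,\boldsymbol{\omega})-u(\cdot,(\boldsymbol{\omega}_{\{1:s\}},\mathbf 0))\big)\,\mathrm d\boldsymbol{\omega}$. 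First I would apply the fundamental theorem of calculus along the segment joining $(\boldsymbol{\omega}_{\{1:s\}},\mathbf 0)$ and $\boldsymbol{\omega}$,
\[
  u(\cdot,\boldsymbol{\omega})-u(\cdot,(\boldsymbol{\omega}_{\{1:s\}},\mathbf 0))=\sum_{j>s}\omega_j\int_0^1\partial_{\omega_j}u(\cdot,(\boldsymbol{\omega}_{\{1:s\}},t\boldsymbol{\omega}_{\{>s\}}))\,\mathrm dt,
\]
the series converging absolutely by \Cref{thm:regularity-u-wrt-stochastic} ($|\boldsymbol{\nu}|=1$) and $\sum_j\|\psi_j\|_\infty<\infty$. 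Then I would freeze the tail argument at $\mathbf 0$: the frozen piece $\partial_{\omega_j}u(\cdot,(\boldsymbol{\omega}_{\{1:s\}},\mathbf 0))$ does not depend on $\omega_j$, so its contribution vanishes because $\int_{-1/2}^{1/2}\omega_j\,\mathrm d\omega_j=0$, and applying the fundamental theorem of calculus again to the leftover produces a double sum $\sum_{j,k>s}\int_\Omega\omega_j\omega_k\,\Theta_{jk}\,G(\partial_{\omega_j}\partial_{\omega_k}u(\cdot,(\boldsymbol{\omega}_{\{1:s\}},\tau\boldsymbol{\omega}_{\{>s\}})))\,\mathrm d\boldsymbol{\omega}$ with bounded weights $\Theta_{jk}$ coming from the nested parameter integrals.

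Next I would freeze once more. The frozen coefficient $\partial_{\omega_j}\partial_{\omega_k}u(\cdot,(\boldsymbol{\omega}_{\{1:s\}},\mathbf 0))$ is independent of both $\omega_j$ and $\omega_k$, and $\int_{-1/2}^{1/2}\int_{-1/2}^{1/2}\omega_j\omega_k\,\mathrm d\omega_j\,\mathrm d\omega_k=\tfrac1{12}\delta_{jk}$, so only the diagonal $j=k$ survives. Using \Cref{thm:regularity-u-wrt-stochastic} with $\boldsymbol{\nu}=2\mathbf e_j$ (whence the bound carries the single factor $\|\psi_j\|_\infty^2$) and $|G(v)|\le\|G\|_{L^2(D)}\|v\|_{L^2(D)}$, this diagonal part is $\lesssim\|G\|_{L^2(D)}\,\kappa^{-1}(1+\kappa^2)\|f\|_{L^2(D)}\sum_{j>s}\|\psi_j\|_\infty^2$. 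This leading term is then bounded by the tail-$\ell^2$ estimate: the ordering together with $\sum_j\|\psi_j\|_\infty^p<\infty$ gives $\|\psi_j\|_\infty\lesssim j^{-1/p}$, hence $\sum_{j>s}\|\psi_j\|_\infty^2\lesssim s^{1-2/p}$ since $2/p>1$; combining with $\kappa^{-1}(1+\kappa^2)\lesssim1+\kappa$ yields \eqref{equ:truncation-error-integral-solution}. Equivalently, one can run the same symmetry argument through the truncated adjoint solution $w^s$, which depends only on $\boldsymbol{\omega}_{\{1:s\}}$, via the residual identity $G(u-u^s)=\kappa^2\sum_{j>s}\omega_j(\psi_j u,w^s)+i\kappa\langle(\sqrt n-\sqrt{n^s})u,w^s\rangle$, using in addition the boundary bounds of \Cref{thm:regularity-u-wrt-stochastic} for the boundary term.

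The delicate point is the remainder left after the second freezing: it is a triple sum $\sum_{j,k,l>s}$ whose coefficient still depends on $\boldsymbol{\omega}_{\{>s\}}$, so it does not collapse onto a diagonal, and a crude bound only gives $(\sum_{j>s}\|\psi_j\|_\infty)^3\lesssim s^{3(1-1/p)}$, which is dominated by $s^{1-2/p}$ only when $p\le\tfrac12$. For $p\in(\tfrac12,1)$ one has to iterate the freezing a finite, $p$-dependent number $R=\lceil(2-p)/(1-p)\rceil$ of times: at each even level the frozen part collapses onto partial diagonals and is $\lesssim(\sum_{j>s}\|\psi_j\|_\infty^2)^{\,r/2}\lesssim s^{1-2/p}$, at each odd level it vanishes by the oddness of $\int_{-1/2}^{1/2}\omega^m\,\mathrm d\omega$, and the final $R$-fold tail-dependent remainder is $\lesssim(\sum_{j>s}\|\psi_j\|_\infty)^R\lesssim s^{R(1-1/p)}\le s^{1-2/p}$. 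Carrying the wavenumber explicitly through all of these terms so that the constants accumulate only into the factor $(1+\kappa)$, and checking absolute convergence / Fubini at each freezing step via \Cref{thm:regularity-u-wrt-stochastic}, is the technical heart of the argument.
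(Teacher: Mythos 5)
Your proof of \eqref{equ:truncation-error-solution} coincides with the paper's: the paper obtains it by inserting $\|n-n^s\|_\infty\le\tfrac12\sum_{j\ge s+1}\|\psi_j\|_\infty\lesssim s^{1-1/p}$ into \Cref{lem:gap-u-us}, exactly as you propose. For \eqref{equ:truncation-error-integral-solution} the paper offers no argument at all — it defers to the proof of Theorem 5.1 in the cited reference — so your reconstruction of the parity-cancellation/iterated-freezing argument is supplying detail the paper omits, and the roadmap you describe (odd moments of the centered uniform variables annihilate the frozen odd-order terms; the surviving even-order terms collapse onto partial diagonals controlled by $\sum_{j>s}\|\psi_j\|_\infty^2\lesssim s^{1-2/p}$; the order-$R$ remainder with $R\gtrsim(2-p)/(1-p)$ is absorbed) is indeed the standard mechanism behind the doubled rate.

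There are, however, two concrete problems in your treatment of \eqref{equ:truncation-error-integral-solution}. First, the step you yourself label ``the technical heart'' — propagating the wavenumber so that only $(1+\kappa)$ survives — does not close with the ingredients available in this paper. \Cref{thm:regularity-u-wrt-stochastic} gives $\|\partial^{\boldsymbol{\nu}}u\|_{L^2(D)}\lesssim\kappa^{-1}(1+\kappa^{|\boldsymbol{\nu}|})\,|\boldsymbol{\nu}|!\prod_j\|\psi_j\|_\infty^{\nu_j}\|f\|_{L^2(D)}$, so the order-$r$ terms in your expansion carry a factor of order $\kappa^{r-1}$: already the cubic remainder after the first diagonal collapse comes with $\kappa^{2}(\sum_{j>s}\|\psi_j\|_\infty)^{3}$, and the order-$R$ remainder needed for $p$ close to $1$ carries $\kappa^{R-1}$. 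Neither is dominated by $(1+\kappa)s^{1-2/p}$ uniformly in $\kappa$ and $s$, so the claimed constant cannot be reached by this bookkeeping alone; this must be resolved (or the $\kappa$-dependence weakened), not merely flagged. Second, the parenthetical ``equivalent'' duality route through the truncated adjoint solution $w^s$ is not equivalent as stated: after the symmetry kills $\int_\Omega\omega_j(\psi_ju^s,w^s)\,\mathrm{d}\boldsymbol{\omega}$, the leftover $\int_\Omega\omega_j(\psi_j(u-u^s),w^s)\,\mathrm{d}\boldsymbol{\omega}$ is only $O(\|\psi_j\|_\infty\sum_{k>s}\|\psi_k\|_\infty)$, and summing over $j>s$ yields $(\sum_{j>s}\|\psi_j\|_\infty)^2\lesssim s^{2-2/p}$, the weaker classical rate, unless one reinserts the same diagonal-collapse machinery.
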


In the above \Cref{thm:approximation-error-truncation},  \eqref{equ:truncation-error-solution} can be calculated directly using \cref{lem:gap-u-us}. As for the proof of \eqref{equ:truncation-error-integral-solution}, it can be derived by following the proof of Theorem 5.1 in \cite{ganesh2021quasi}.

For a real-valued function $F$ depending on infinite variables $\boldsymbol{\omega} = (\omega_1, \omega_2,
\cdots)$, we apply the qMC method by ``anchoring to 0'' all components beyond the dimension $s$. In particular, the integral reads
\begin{equation*}
  I(F) = \lim_{s\rightarrow \infty} I_s(F) = \lim_{s\rightarrow \infty} \int_{[-\frac 12, \frac12]^s}F(\boldsymbol{\omega}_s; 0) \mathrm{d}\omega_1 \cdots \mathrm{d}\omega_s.
\end{equation*}
A $N$-point qMC approximation to the integral with a randomly shifted lattice rule is
\begin{equation*}
  Q_{s, N}(F; \boldsymbol{\Delta}) = \frac 1N\sum_{j=1}^N F \Big( \{\boldsymbol{\omega}^j_s + \boldsymbol{\Delta}\} -  \mathbf{\frac{1}{2}} \Big),
\end{equation*}
where $\boldsymbol{\omega}^1_s, \cdots, \boldsymbol{\omega}^N_s$ are deterministic lattice cubature points that carefully chosen from $[0, 1]^s$, and $\boldsymbol{\Delta}$ is a random shift drawn from a uniform distribution on $[0, 1]^s$. The lattice points are given by $\boldsymbol{\omega}^j = \{\frac{j\mathbf{z}}{N}\}$ for $j = 1, \cdots, N$ with $\mathbf{z} \in \mathds{Z}^s$ known as the generating vector and determined by the quality of lattice rule. Then, given $N$ a prime power and the weights $\gamma_{\mathfrak{u}}$ as input, a generating vector $\mathbf{z}$ can be obtained by the component-by-component (CBC) construction to achieve the root-mean-square error (see, e.g., \cite{Dick_Kuo_Sloan_2013})
\begin{equation*}
  \sqrt{\mathds{E}[ |I_s(F) - Q_{s, N}(F; \cdot)|^2 ]} \leq \bigg( \frac{2}{N} \sum_{\emptyset \neq \mathfrak{u} \subseteq \{1:s\}} \gamma^{\lambda}_{\mathfrak{u}} [\varrho(\lambda)]^{|\mathfrak{u}|} \bigg)^{1/(2\lambda)} \|F\|_{\mathcal{W}_{s, \boldsymbol{\gamma}}}
\end{equation*}
for all $\lambda \in (\frac 12, 1]$, where $\varrho(\lambda) = \frac{2\zeta(2\lambda)}{(2\pi^2)^{\lambda}}$ with $\zeta$ being the Riemann zeta function.

To the Helmholtz problem, owing to the linearity and boundedness of $G$, we have
\begin{equation}
  \left| \frac{\partial^{|\mathfrak{u}|}}{\partial \boldsymbol{\omega}_{\mathfrak{u}}} G(u^s(\cdot, \boldsymbol{\omega}))  \right| = \left| G\left( \frac{\partial^{|\mathfrak{u}|}}{\partial \boldsymbol{\omega}_{\mathfrak{u}}} u^s( \cdot, \boldsymbol{\omega} ) \right) \right| \leq \|G\|_{L^2(D)} \left\| \frac{\partial^{|\mathfrak{u}|}}{\partial \boldsymbol{\omega}_{\mathfrak{u}}} u^s(\cdot, \boldsymbol{\omega}) \right\|_V.
\end{equation}
Applying \Cref{thm:regularity-u-wrt-stochastic}, we obtain a similar result like in
\cite{ganesh2021quasi} as follows.
\begin{lemma}
  Let $u^s$ be the solution of the parametric form \eqref{equ:weak-form-with-randomness}, $G \in L^2(D)$ and $f \in L^2(D)$. A generating vector can be constructed for a randomly shifted lattice rule such that
  \begin{equation}
    \sqrt{\mathds{E}[ |I_s(G(u^s(\mathbf{x}, \boldsymbol{\omega}))) - Q_{s, N}(G(u^s(\mathbf{x}, \boldsymbol{\omega}); \cdot)|^2 ]} \leq \frac{C_{s,\gamma}(\lambda)}{N^{1/(2\lambda)}} \|f\|_{L^2(D)} \|G\|_{L^2(D)},
  \end{equation}
  where
  \begin{equation*}
    C_{s,\gamma}(\lambda) = 2^{1/(2\lambda)} \Bigg( \sum_{\mathfrak{u} \subseteq \{1:s\}} \Big( |\mathfrak{u}|!\prod_{j \in \mathfrak{u}} (1+\kappa)\|\psi_j\|_{W^{1,\infty}} [\varrho(\lambda)]^{1/(2\lambda)} \Big)^{\frac{2\lambda}{1+\lambda}} \Bigg)^{\frac{1 + \lambda}{2\lambda}},
  \end{equation*}
  with
  \begin{equation*}
    \lambda = \left\{\begin{aligned}
      & \frac{1}{2 - 2\delta} \quad \text{ for some } \delta \in (0, \frac 12) \quad & p \in (0, \frac 23], \\
      & \frac{p}{2-p} \quad & p \in (\frac 23, 1).
    \end{aligned}\right.
  \end{equation*}
  \label{lem:approximation-error-qMC}
\end{lemma}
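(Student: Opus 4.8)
The plan is to feed the parametric regularity estimate of \Cref{thm:regularity-u-wrt-stochastic} into the generic randomly shifted lattice rule error bound quoted just before the lemma, and then to choose the weights $\gamma_{\mathfrak{u}}$ that balance the two resulting sums, following the now standard strategy of the qMC finite element literature. Set $F=G(u^s)$. By linearity and boundedness of $G$, for every $\mathfrak{u}\subseteq\{1:s\}$ and every $\boldsymbol{\omega}$ one has the pointwise bound $|\partial^{\mathfrak{u}}F(\boldsymbol{\omega})| = |G(\partial^{\mathfrak{u}}u^s(\cdot,\boldsymbol{\omega}))| \le \|G\|_{L^2(D)}\,\|\partial^{\mathfrak{u}}u^s(\cdot,\boldsymbol{\omega})\|_V$, where $\partial^{\mathfrak{u}}$ is the first-order mixed derivative in $\omega_j$, $j\in\mathfrak{u}$. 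Since $n^s$ satisfies the same hypotheses as $n$, \Cref{thm:regularity-u-wrt-stochastic} applied with the multi-index $\boldsymbol{\nu}=\mathbf 1_{\mathfrak{u}}$ (hence $|\boldsymbol{\nu}|=|\mathfrak{u}|$ and $m_j=1$ for $j\in\mathfrak{u}$), together with $1+\kappa^{|\mathfrak{u}|}\le 2(1+\kappa)^{|\mathfrak{u}|}$ and $\|\psi_j\|_\infty\le\|\psi_j\|_{W^{1,\infty}(D)}$, gives $\|\partial^{\mathfrak{u}}u^s(\cdot,\boldsymbol{\omega})\|_V \le C\,|\mathfrak{u}|!\,(\prod_{j\in\mathfrak{u}}b_j)\,\|f\|_{L^2(D)}$ with $b_j:=(1+\kappa)\|\psi_j\|_{W^{1,\infty}(D)}$. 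Because the parameter cube $[-\tfrac12,\tfrac12]^s$ has unit measure, each of the nested integrals in the definition \eqref{equ:definition-norm-W-for-F} of $\|F\|_{\mathcal{W}_{s,\boldsymbol{\gamma}}}$ is dominated by the corresponding pointwise bound, so that with $\beta_{\mathfrak{u}}:=|\mathfrak{u}|!\prod_{j\in\mathfrak{u}}b_j$,
\[
  \|G(u^s)\|_{\mathcal{W}_{s,\boldsymbol{\gamma}}} \;\le\; C\,\|f\|_{L^2(D)}\,\|G\|_{L^2(D)}\,\Bigl(\sum_{\mathfrak{u}\subseteq\{1:s\}}\gamma_{\mathfrak{u}}^{-1}\beta_{\mathfrak{u}}^{2}\Bigr)^{1/2}.
\]

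Inserting this into the lattice rule estimate, the root-mean-square error is bounded, up to the factor $\|f\|_{L^2(D)}\|G\|_{L^2(D)}$, by the product $\bigl(\tfrac{2}{N}\sum_{\emptyset\ne\mathfrak{u}}\gamma_{\mathfrak{u}}^{\lambda}[\varrho(\lambda)]^{|\mathfrak{u}|}\bigr)^{1/(2\lambda)}\bigl(\sum_{\mathfrak{u}}\gamma_{\mathfrak{u}}^{-1}\beta_{\mathfrak{u}}^{2}\bigr)^{1/2}$. I would then pick the weights so that the two summands become proportional termwise, namely
\[
  \gamma_{\mathfrak{u}} \;=\; \Bigl(\frac{\beta_{\mathfrak{u}}^{2}}{[\varrho(\lambda)]^{|\mathfrak{u}|}}\Bigr)^{1/(1+\lambda)},
\]
which is the usual (SPOD-type) choice. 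With it both sums are bounded by the single quantity
\[
  S_s \;:=\; \sum_{\mathfrak{u}\subseteq\{1:s\}}\Bigl(\beta_{\mathfrak{u}}\,[\varrho(\lambda)]^{|\mathfrak{u}|/(2\lambda)}\Bigr)^{2\lambda/(1+\lambda)} \;=\; \sum_{\mathfrak{u}\subseteq\{1:s\}}\Bigl(|\mathfrak{u}|!\prod_{j\in\mathfrak{u}}(1+\kappa)\|\psi_j\|_{W^{1,\infty}}[\varrho(\lambda)]^{1/(2\lambda)}\Bigr)^{2\lambda/(1+\lambda)},
\]
so that the error bound collapses to $2^{1/(2\lambda)}S_s^{(1+\lambda)/(2\lambda)}N^{-1/(2\lambda)}\|f\|_{L^2(D)}\|G\|_{L^2(D)}$, i.e. exactly the asserted estimate with $C_{s,\gamma}(\lambda)=2^{1/(2\lambda)}S_s^{(1+\lambda)/(2\lambda)}$.

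It then remains to show that $S_s$ is finite and bounded uniformly in $s$, which is also what pins down the admissible range of $\lambda$. Grouping the sum over $\mathfrak u$ by $\ell=|\mathfrak{u}|$ and using $\sum_{|\mathfrak{u}|=\ell}\prod_{j\in\mathfrak{u}}c_j\le\frac1{\ell!}(\sum_{j\ge1}c_j)^{\ell}$ with $c_j=b_j^{2\lambda/(1+\lambda)}[\varrho(\lambda)]^{1/(1+\lambda)}$ gives $S_s\le\sum_{\ell\ge0}(\ell!)^{\frac{2\lambda}{1+\lambda}-1}(\sum_{j\ge1}c_j)^{\ell}$; since $\lambda<1$ forces the exponent $\frac{2\lambda}{1+\lambda}-1<0$, this series converges as soon as $\sum_{j\ge1}c_j<\infty$, i.e. $\sum_{j\ge1}\|\psi_j\|_{W^{1,\infty}}^{2\lambda/(1+\lambda)}<\infty$. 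By the $p$-summability in \Cref{assump:for-KL-series} this holds whenever $\tfrac{2\lambda}{1+\lambda}\ge p$, that is $\lambda\ge\tfrac{p}{2-p}$; intersecting with the constraint $\lambda\in(\tfrac12,1]$ required by the lattice rule theorem gives the stated choice, $\lambda=\tfrac{p}{2-p}$ when $p\in(\tfrac23,1)$, and $\lambda=\tfrac1{2-2\delta}$ for an arbitrarily small $\delta\in(0,\tfrac12)$ when $p\in(0,\tfrac23]$ (for which $\tfrac{2\lambda}{1+\lambda}=\tfrac{2}{3-2\delta}>\tfrac23\ge p$). A CBC construction then delivers a generating vector attaining this bound. \textbf{The main obstacle} is not any individual estimate but the coupling in the weight-balancing step: the $\gamma_{\mathfrak{u}}$ must be admissible in the lattice rule error bound \emph{and} keep $S_s$ (hence $C_{s,\gamma}(\lambda)$) bounded independently of the truncation dimension $s$, and it is exactly this double requirement, via the summability argument above, that forces the link between $\lambda$ and $p$; the remaining bookkeeping parallels \cite{ganesh2021quasi,Dick_Kuo_Sloan_2013}.
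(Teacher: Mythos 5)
Your proposal is correct and follows exactly the route the paper intends: the paper itself gives no written proof, only the remark that one applies \Cref{thm:regularity-u-wrt-stochastic} together with the quoted shifted-lattice-rule bound and the weight choice of \cite{ganesh2021quasi}, and your argument fills in precisely those steps (regularity bound with $\boldsymbol{\nu}=\mathbf 1_{\mathfrak u}$, the balancing weights $\gamma_{\mathfrak u}=(\beta_{\mathfrak u}^2/[\varrho(\lambda)]^{|\mathfrak u|})^{1/(1+\lambda)}$, and the $s$-uniform summability analysis that forces the stated choice of $\lambda$). The bookkeeping checks out, including the exponent $(1+\lambda)/(2\lambda)$ on $S_s$ and the resulting rate $N^{-\min(1/p-1/2,\,1-\delta)}$.
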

So, in order to obtain the above error estimate, the weights $\gamma_{\mathfrak{u}}$ should be chosen carefully and they have been given in \cite{ganesh2021quasi}. This leads to a convergence rate $\mathcal{O}(N^{-\min(\frac{1}{p}-\frac 12, 1-\delta)})$ with an implied constant independent of $s$.

\section{The overall numerical approach}
\label{sec:overall-method-analysis}
\subsection{Main algorithm}
The formal algorithm is then outlined as \Cref{alg:MsFEM-POD-Helmholtz}.

\begin{algorithm}[ht]
  \caption{The qMC multiscale method for random Helmholtz equation.}
  \label{alg:MsFEM-POD-Helmholtz}
  \begin{algorithmic}[1]
    \REQUIRE QMC samples $\{\boldsymbol{\omega}_j\}_{j=1}^N$, coarse mesh $\mathcal{T}_H$, fine mesh $\mathcal{T}_h$, and the oversampling size $\ell$.
        \FOR{each $j \in \{1 : N\}$}
    \STATE Solve optimal problems \eqref{equ:optimal-problem-localized} and generate basis set $\Psi_{H,\ell} = span\{\phi_k\}_{k=1}^{N_H}$;
    \STATE Find $u_{H,\ell}(\boldsymbol{\omega}^j) \in \Psi_{H,\ell}$ such that
    \begin{equation}
      a(\boldsymbol{\omega}^j; u^s_{H, \ell}, v) = F(v), \quad\forall v \in \Psi_{H,\ell}.
      \label{equ:fully-discretized-weak-form}
    \end{equation}
    \ENDFOR
    \STATE Compute the expectation $\mathds{E}(u_{H,\ell})$.
  \end{algorithmic}
\end{algorithm}

\subsection{Convergence analysis of the fully discrete approximation}

The error estimate of the algorithm is a combined form that consists of the truncation error, the spatial approximation error and the qMC integral error. Specifically, we now estimate
\begin{equation*}
  \sqrt{\mathds{E}[|I(G(u(\mathbf{x}, \boldsymbol{\omega}))) - Q_{s,N}(G(u_{H,\ell}^{s}(\mathbf{x}, \boldsymbol{\omega})))|^2]},
\end{equation*}
where $u_H^{s}(\mathbf{x}, \boldsymbol{\omega})$ is the solution of \eqref{equ:fully-discretized-weak-form}.
We split this error into three parts as
\begin{align*}
  &|I(G(u(\mathbf{x}, \boldsymbol{\omega}))) - Q_{s,N}(G(u_{H,\ell}^{s}(\mathbf{x}, \boldsymbol{\omega})))| \\
  \leq & |I(G(u(\mathbf{x}, \boldsymbol{\omega}))) - I_s(G(u(\mathbf{x}, \boldsymbol{\omega})))| + |I_s(G(u^s(\mathbf{x}, \boldsymbol{\omega}))) - Q_{s,N}(G(u^{s}(\mathbf{x}, \boldsymbol{\omega})))| \\
  & + |Q_{s,N}(G(u^{s}(\mathbf{x}, \boldsymbol{\omega}))) - Q_{s,N}(G(u_{H,\ell}^{s}(\mathbf{x}, \boldsymbol{\omega})))|,
\end{align*}
in which it holds true that $I_s(G(u(\mathbf{x}, \boldsymbol{\omega}))) = I_s(G(u^s(\mathbf{x}, \boldsymbol{\omega})))$.
Combining the estimates from the above subsections with \Cref{lem:approximation-error-localized-MsFEM}, \Cref{thm:approximation-error-truncation} and \Cref{lem:approximation-error-qMC} yields the main conclusion of this paper.

\begin{theorem}\label{thm:main-result}
  Let \cref{assump:mesh-size-wavenumber}, \cref{assump:for-KL-series}, and the bounds on random refractive index \eqref{equ:bounds-for-n} hold. For every $\boldsymbol{\omega} \in \Omega$, let $u(\mathbf{x}, \boldsymbol{\omega})$ be the solution of \eqref{equ:definition-weak-form} and $u^{s}_{H, \ell}$ be the solution of \eqref{equ:fully-discretized-weak-form}. Then for all $f \in H^2(D)$ and the linear functional $G \in L^2(D)$, a generating vector can be constructed for a randomly shifted lattice rule such that the approximation error satisfies
  \begin{equation*}
      \sqrt{\mathds{E}[|I(G(u )) - Q_{s,N}(G(u_{H,\ell}^{s} ))|^2]} \leq C\left( H^2 + \beta^{\ell} + N^{-\alpha} + (1 + \kappa) s^{1-2/p} \right),
  \end{equation*}
  where $\alpha = \min(\frac 1p - \frac 12, 1-\delta)$, the constant $C$ mainly depends on $\|f\|_{H^2(D)}$, $\|G\|_{L^2(D)}$ and the oversampling size $\ell$, but is independent of $\kappa$, $s$, $N$ and $H$.
\end{theorem}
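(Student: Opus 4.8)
The plan is to start from the three-term splitting displayed just above the statement and bound each summand in the root-mean-square sense, invoking one of \cref{thm:approximation-error-truncation}, \cref{lem:approximation-error-qMC} and \cref{lem:approximation-error-localized-MsFEM} for the corresponding piece. Since the expectation $\mathds{E}[\,\cdot\,]$ is taken only over the random shift $\boldsymbol{\Delta}$ and the truncation term does not depend on $\boldsymbol{\Delta}$, Minkowski's inequality in $L^2(\boldsymbol{\Delta})$ gives
\begin{align*}
  \sqrt{\mathds{E}[|I(G(u)) - Q_{s,N}(G(u_{H,\ell}^{s}))|^2]}
  &\leq \bigl|I(G(u)) - I_s(G(u^s))\bigr| + \sqrt{\mathds{E}\bigl[|I_s(G(u^s)) - Q_{s,N}(G(u^{s}))|^2\bigr]} \\
  &\quad + \sqrt{\mathds{E}\bigl[|Q_{s,N}(G(u^{s})) - Q_{s,N}(G(u_{H,\ell}^{s}))|^2\bigr]} =: T_1 + T_2 + T_3,
\end{align*}
where I used the identity $I_s(G(u)) = I_s(G(u^s))$, valid because fixing $\omega_j = 0$ for $j > s$ turns $n(\cdot,\boldsymbol{\omega})$ into $n^s(\cdot,\boldsymbol{\omega})$ and hence $u$ into $u^s$.

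For the dimension--truncation term $T_1$ I would quote \eqref{equ:truncation-error-integral-solution} of \cref{thm:approximation-error-truncation} directly, obtaining $T_1 \le C(1+\kappa)\,\|f\|_{L^2(D)}\|G(\cdot)\|_{L^2(D)}\, s^{1-2/p}$. For the quasi-Monte Carlo term $T_2$ I would apply \cref{lem:approximation-error-qMC} to the truncated solution $u^s$: with the product--and--order dependent weights $\gamma_{\mathfrak{u}}$ prescribed in \cite{ganesh2021quasi}, whose choice is driven by the summability in \cref{assump:for-KL-series}, the constant $C_{s,\gamma}(\lambda)$ in that lemma stays bounded as $s\to\infty$, so that $T_2 \le C\, N^{-1/(2\lambda)}\|f\|_{L^2(D)}\|G\|_{L^2(D)}$; for the stated choice of $\lambda$ in terms of $p$ and $\delta$ one checks $1/(2\lambda) = \min(\tfrac1p-\tfrac12,\,1-\delta) = \alpha$.

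For the spatial--discretization term $T_3$ the key observation is that, for each fixed $\boldsymbol{\Delta}$, $Q_{s,N}(\cdot;\boldsymbol{\Delta})$ is a convex combination of point evaluations at parameters in $\Omega$, whence
\begin{equation*}
  \bigl|Q_{s,N}(G(u^{s})) - Q_{s,N}(G(u_{H,\ell}^{s}))\bigr|
  \le \sup_{\boldsymbol{\omega}\in\Omega}\bigl|G\bigl(u^{s}(\cdot,\boldsymbol{\omega}) - u_{H,\ell}^{s}(\cdot,\boldsymbol{\omega})\bigr)\bigr|
  \le \|G\|_{L^2(D)}\,\sup_{\boldsymbol{\omega}\in\Omega}\bigl\|u^{s}(\cdot,\boldsymbol{\omega}) - u_{H,\ell}^{s}(\cdot,\boldsymbol{\omega})\bigr\|_{V},
\end{equation*}
using $\|v\|_{L^2(D)}\le\|v\|_V$; this quantity is deterministic, so its RMS over $\boldsymbol{\Delta}$ equals itself. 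The truncated problem \eqref{equ:weak-form-with-randomness} has precisely the structure of \eqref{equ:definition-weak-form} with $n$ replaced by $n^s$, and $n^s$ obeys the bounds \eqref{equ:bounds-for-n} for every $\boldsymbol{\omega}\in\Omega$ with constants independent of $\boldsymbol{\omega}$; hence \cref{lem:approximation-error-localized-MsFEM}, applied with regularity exponent $2$ (admissible since $f\in H^2(D)$) and $\ell$ chosen as there, gives, uniformly in $\boldsymbol{\omega}$,
\begin{equation*}
  \|u^{s}(\cdot,\boldsymbol{\omega}) - u_{H,\ell}^{s}(\cdot,\boldsymbol{\omega})\|_{V} \lesssim (H^2 + \kappa H^4)\,\|f\|_{H^2(D)} + c_\ell\beta^\ell\|f\|_{L^2(D)}.
\end{equation*}
Under \cref{assump:mesh-size-wavenumber} one has $\kappa H^4 = (\kappa H)\,H^3 \lesssim H^2$, so $T_3 \lesssim \|G\|_{L^2(D)}\bigl(H^2\|f\|_{H^2(D)} + c_\ell\beta^\ell\|f\|_{L^2(D)}\bigr)$, and the polynomial factor $c_\ell$ is absorbed into the $\ell$-dependent constant. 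Adding the bounds for $T_1$, $T_2$, $T_3$ and collecting constants produces $C\bigl(H^2 + \beta^\ell + N^{-\alpha} + (1+\kappa)s^{1-2/p}\bigr)$ with $C$ depending only on $\|f\|_{H^2(D)}$, $\|G\|_{L^2(D)}$ and $\ell$, as claimed.

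I expect the main obstacle to be the bookkeeping behind $T_2$: one must trace how the $(1+\kappa)$-weighted mixed-derivative bounds of \cref{thm:regularity-u-wrt-stochastic} enter the weighted-space norm $\|G(u^s)\|_{\mathcal{W}_{s,\boldsymbol{\gamma}}}$, verify that with the weights of \cite{ganesh2021quasi} the CBC error constant $C_{s,\gamma}(\lambda)$ remains bounded as $s\to\infty$, and confirm that the resulting dependence on $\kappa$ is compatible with the claimed form of the final constant. The other two contributions are comparatively routine: $T_1$ is a verbatim citation of \cref{thm:approximation-error-truncation}, and $T_3$ reduces, via the convex-combination remark, to the deterministic and $\boldsymbol{\omega}$-uniform multiscale estimate \cref{lem:approximation-error-localized-MsFEM} together with the resolution condition $\kappa H\lesssim 1$.
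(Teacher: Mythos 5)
Your proposal is correct and follows essentially the same route as the paper: the paper's proof consists precisely of the displayed three-term splitting together with citations of \cref{thm:approximation-error-truncation}, \cref{lem:approximation-error-qMC} and \cref{lem:approximation-error-localized-MsFEM}, and your treatment of each term (including the convex-combination argument making $T_3$ deterministic and uniform in $\boldsymbol{\omega}$, and the use of $\kappa H\lesssim 1$ to absorb $\kappa H^4$) simply makes explicit the steps the paper leaves implicit.
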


\section{Numerical experiments}
\label{sec:numerics}
Next, we verify the error estimate given in \Cref{thm:main-result}, excluding the localization error.


\subsection{The multiscale method for the deterministic problems}
The deterministic problems mean that the refractive index is independent of random variables.

\begin{example}
  Consider the 1D problem over the interval $[0, 1]$, and fix $ k = 2^6$ and the fine mesh size $h = 2^{-14}$. Below, four parameters' configurations are employed: (1) $n(x) = 1$ and a discontinuous right-hand side $f(x)$ as
\begin{equation}
  f(x) = \left\{\begin{aligned}
    &2\sqrt{2}, \quad &x \in [\frac{3}{16}, \frac{5}{16}] \cup [\frac{11}{16}, \frac{13}{16}], \\
    &0, \quad &\text{elsewhere};
  \end{aligned}\right.
  \label{equ:1d-discontinuous-rhs}
\end{equation}
(2) $n(x) = (1 + \sin(\pi x))^2$ and $f(x)$ is given by \eqref{equ:1d-discontinuous-rhs}; (3) $n(x) = (1 + \sin(\pi x))^2$ and $f(x) = 1$; (4) $n(x) = \exp(-2\cos(\pi x))$ and $f(x) = 1$.
\begin{figure}[htbp]
  \centering
  \subfloat[$n(x) = 1$, $f$ given by \eqref{equ:1d-discontinuous-rhs}.]{\includegraphics[width=0.45\linewidth]{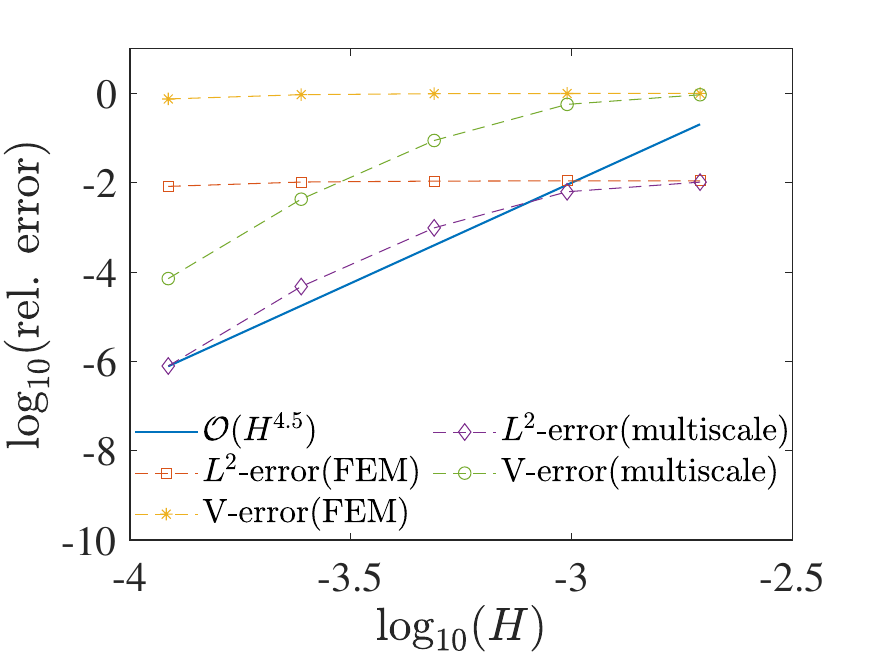}}
  \subfloat[$n(x) = (1 + \sin(\pi x))^2$, $f$ given by \eqref{equ:1d-discontinuous-rhs}.]{\includegraphics[width=0.45\linewidth]{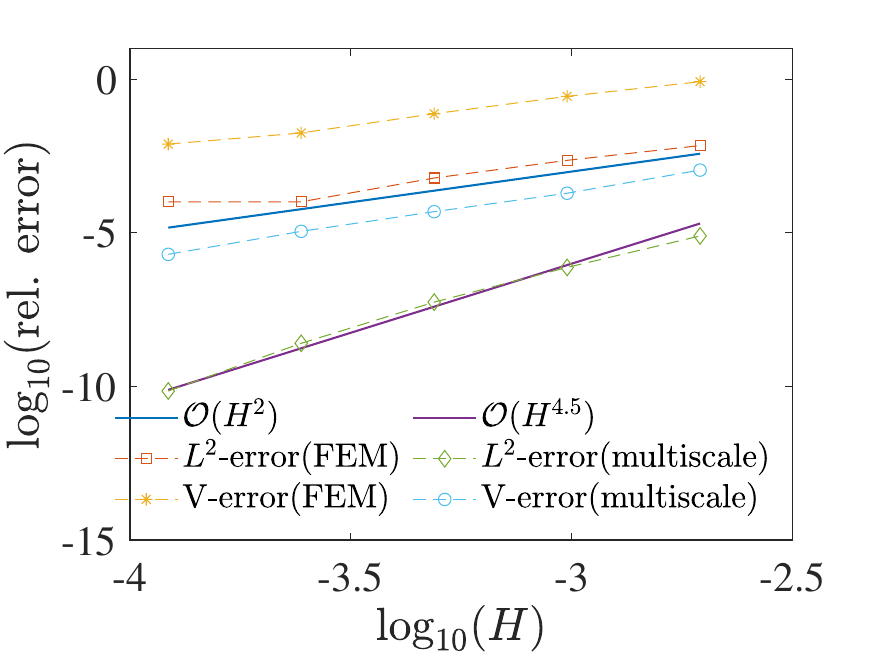}}
  \\
  \subfloat[$n(x) = (1 + \sin(\pi x))^2$, $ f(x) = 1$.]{\includegraphics[width=0.45\linewidth]{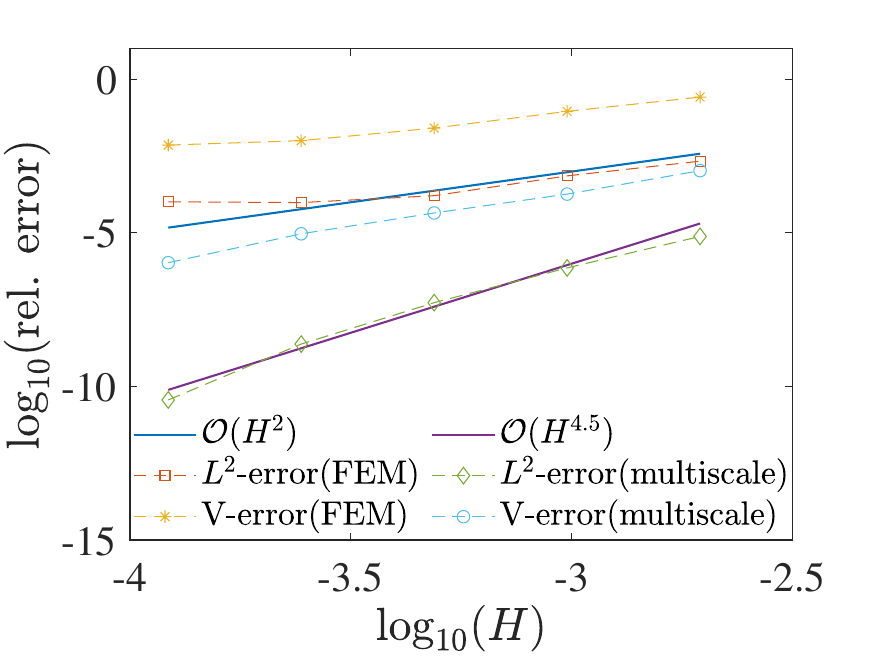}}
  \subfloat[$n(x) = \exp(-2\cos(\pi x))$, $ f(x) = 1$.]{\includegraphics[width=0.45\linewidth]{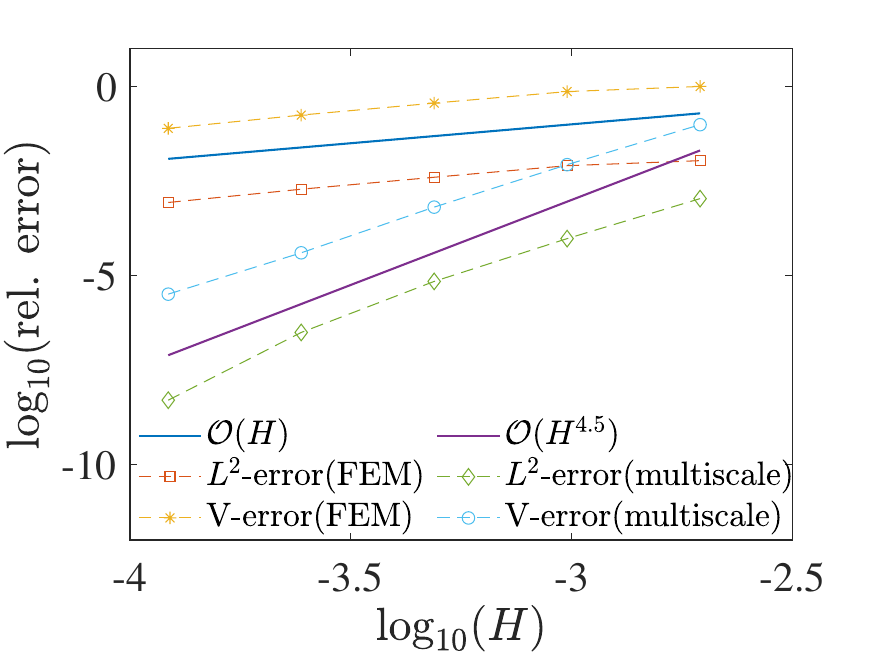}}
  \caption{The convergence rates of the FEM and multiscale method for the 1D Helmholtz equation with different configurations of $n(x)$ and $f(x)$.}
  \label{fig:1d-convergence-msfem}
\end{figure}

As shown in \Cref{fig:1d-convergence-msfem}, both the $L^2$-error and $V$-error of the proposed multiscale method can reach the superconvergence rate $\mathcal{O}(H^4)$ and $\mathcal{O}(H^2)$. The multiscale method of course has a better performance than the FEM for all configurations of $n(x)$ and $f(x)$. We note that the $V$-error of multiscale also shows that the convergence rate can reach $\mathcal{O}(H^4)$.
\end{example}

\begin{example}
  We consider the 2D problem over the domain $D = [0, 1]^2$ and fix $f(\mathbf{x}) = 1$. The reference solution is calculated by the FEM with $h = 2^{-9}$. Meanwhile, we fix $\kappa = 2^3$ here and choose different forms of the deterministic refractive index: (i) $n(x, y) = (1.0 + \sin(\pi x)\sin(\pi y))^2$ and (ii) $n(x, y) = \exp(-2\sin(\pi x)\cos(\pi y))$. We vary the mesh size $H$ and the convergence rates of both $L^2$-error and $V$-error (depicted in \Cref{fig:2d-convergence_msfem}) strictly confirm the error analysis in \Cref{lem:error-analysis-msfem}.
\begin{figure}[htbp]
  \centering
  \subfloat[$n(x, y) = (1 + \sin(\pi x)\sin(\pi y))^2$.]{\includegraphics[width=0.45\linewidth]{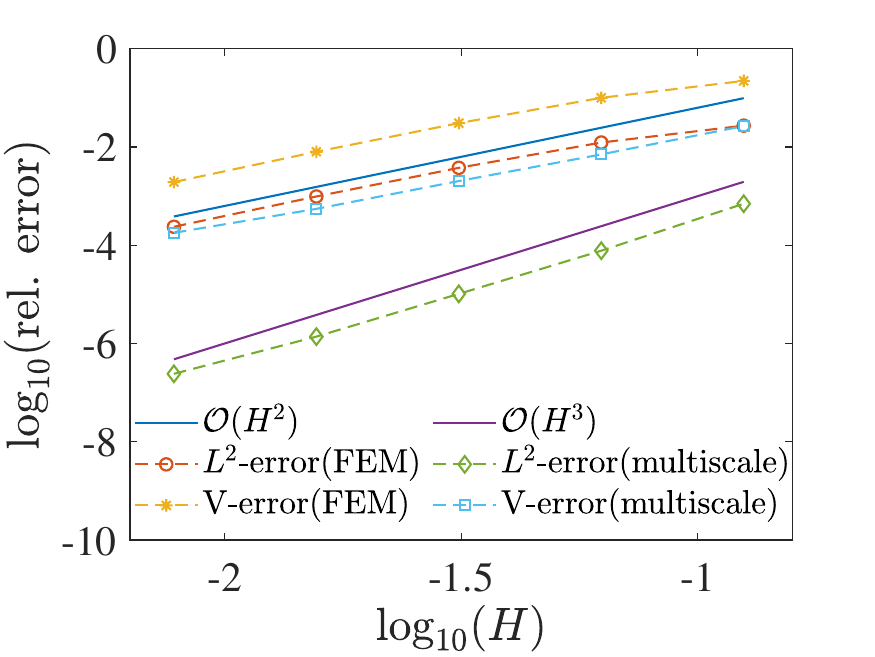}}
  \subfloat[$n(x, y) = \exp(-2\sin(\pi x)\cos(\pi y))$.]{\includegraphics[width=0.45\linewidth]{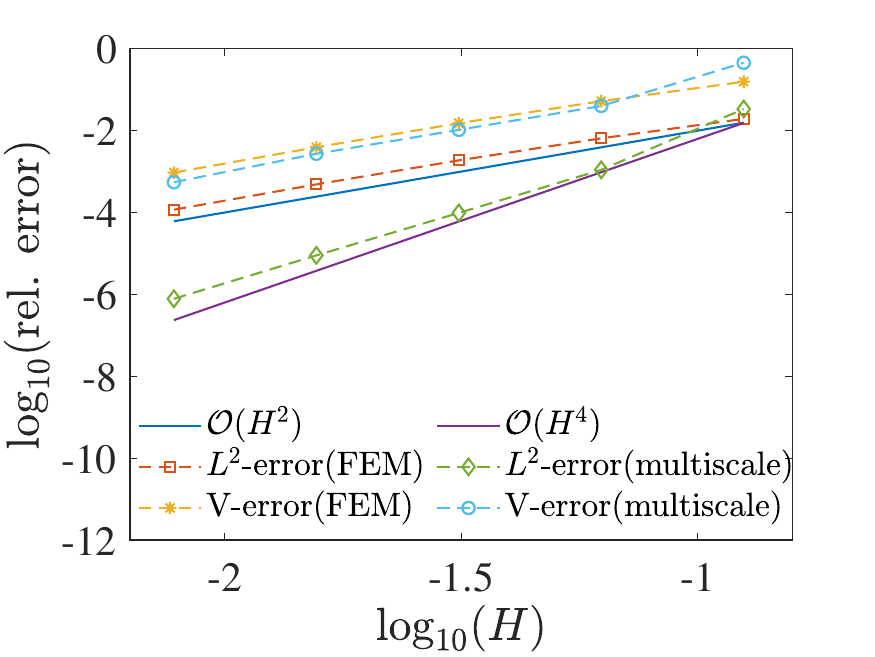}}
  \caption{The convergence rates of the multiscale method for the 2D Helmholtz equation with different $n(x, y)$.}
  \label{fig:2d-convergence_msfem}
\end{figure}
\end{example}

\begin{example}[Localized multiscale method]
  We choose this example from \cite{doi:10.1137/21M1465950}, and intend to demonstrate the effectiveness of the boundary corrector provided in this paper. Consider the heterogeneous Helmholtz equation
  \begin{equation*}
    \begin{aligned}
      &-\nabla \cdot (A\nabla u) - \kappa^2 u = f, &\text{ in } D,\\
    &\nabla u \cdot \nu - i\kappa u = 0, &\text{ on } \partial D,
    \end{aligned}
  \end{equation*}
  where $A$ is piece-wise constant with respect to a quadrilateral background mesh with mesh size $\mathcal{O}(\epsilon)$ and $0 < \epsilon \ll 1$. On each quadrilateral, $A$ takes either the value $\epsilon^2$ or 1. Over the domain $D = [0, 1]^2$, we employ the periodic scatterer in the domain $(0.25, 0.75)^2$ with $D_{\epsilon} = (0.25, 0.75)^2 \cup_{j \in \mathds{Z}^2} \epsilon (j + (0.25, 0.75)^2)$ and let
  \begin{equation*}
    f(x, y) = \left\{\begin{aligned}
      &10^4\exp\left( \frac{-1}{1-\frac{(x-z_1)^2 + (y-z_2)^2}{0.05^2}} \right), &(x-z_1)^2 + (y-z_2)^2 < 0.05^2, &\\
      &0, &\text{elsewhere}. &
    \end{aligned}\right.
  \end{equation*}

  The reference solution is computed using the standard FEM with the mesh size $h = 2^{-9}$. The computation is finished on the coarse mesh with the mesh size $H = 2^{-6}$. The relative error is computed with the norm $\| \cdot \|_{A, \mathcal{V}}^2 = \|A\nabla \cdot\|^2 + \kappa^2\|\cdot\|$. Without the boundary correction, the relative error is $2.23\cdot10^{-2}$, while after the boundary correction being employed, the relative error reduces to $4.23\cdot 10^{-4}$ with $\ell = 10$, which closes to the Super-LOD error $3.3\cdot 10^{-4}$ reported in \cite{doi:10.1137/21M1465950}. The error distributions in space of the localized multiscale method with and without the boundary corrector are compared as in \Cref{fig:comparsion_msfem_boundary_correction}.
  \begin{figure}[htbp]
    \centering
    \includegraphics[width=0.45\linewidth]{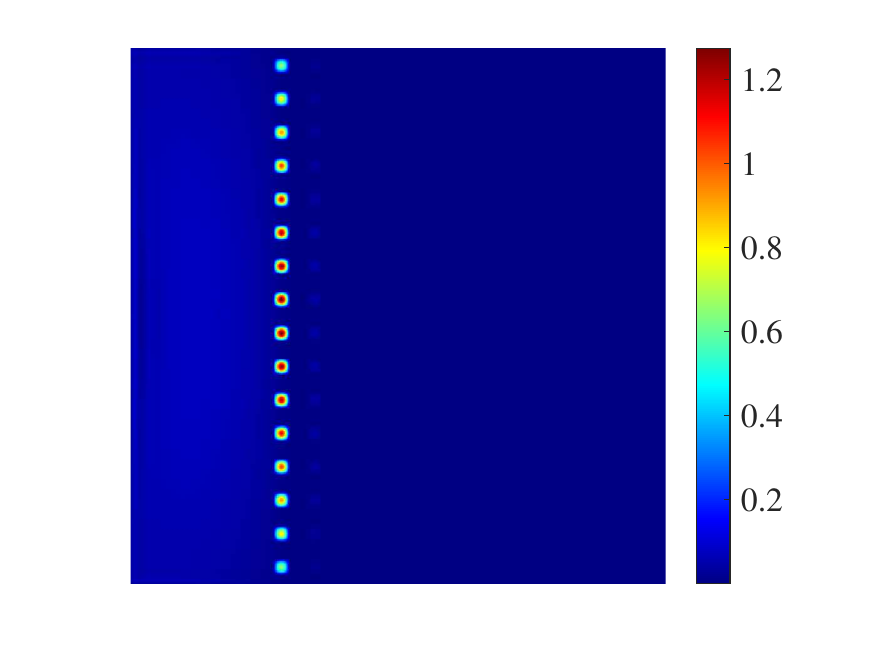}
    \includegraphics[width=0.45\linewidth]{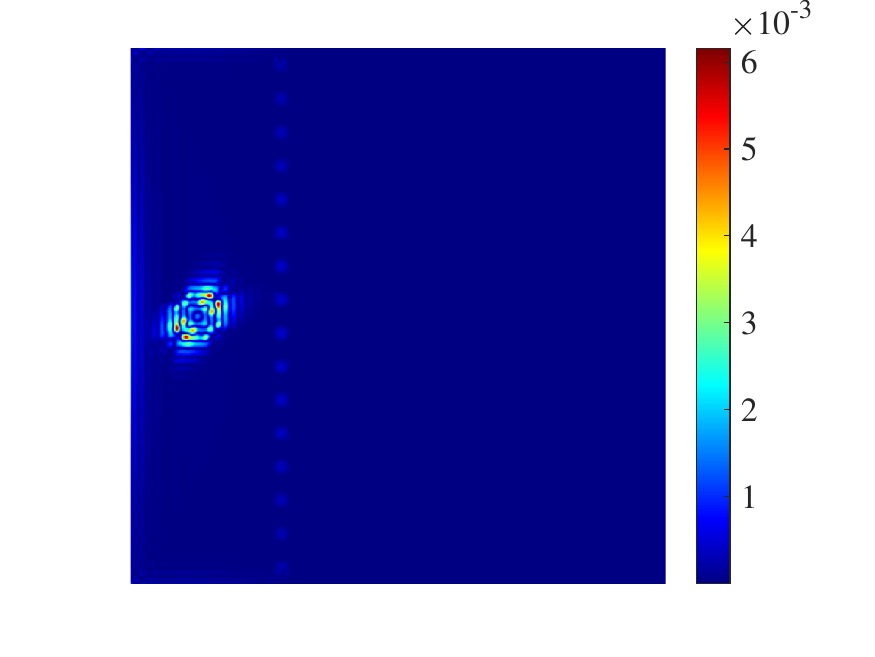}
    \caption{The comparison of the computations without and with the boundary correction. Here the distribution of the absolute error $|u - u_{H, \ell}|$ is depicted.}
    \label{fig:comparsion_msfem_boundary_correction}
  \end{figure}

\end{example}

\begin{example}
  In this experiment, the convergence rates of the localized method with boundary corrector are checked for both the 1D and 2D cases. In particular, we choose $n(x) = \exp(-2\cos(\pi x))$ and $f(x) = 1$ over the interval $[0, 1]$. Here we set $\kappa = 2^6$, and compute the reference solution using the standard FEM with $h = 2^{-14}$. Meanwhile, for the 2D problem, we choose $n(x,y) = 1 + \sin(\pi x)\cos(\pi y)$ and $f(x, y) = 1$. We set $\kappa = 2^4$ and $h = 2^{-10}$ to calculate the reference solution. The numerical convergence rates of the localized method with different oversampling sizes $\ell$ are depicted as in \Cref{fig:localized-convergence-rate}.
  \begin{figure}[htbp]
    \centering
    \includegraphics[width=0.45\linewidth]{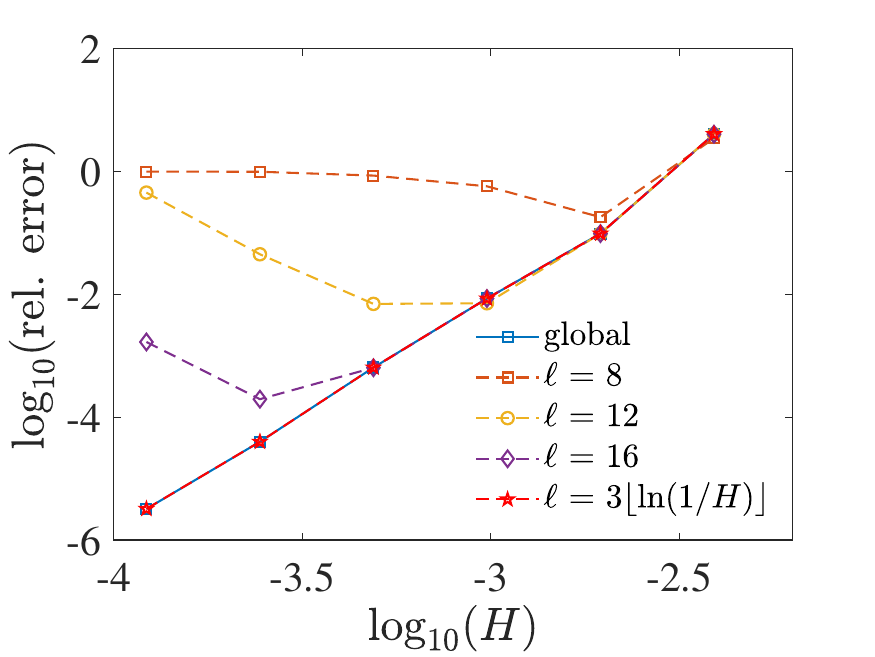}
    \includegraphics[width=0.45\linewidth]{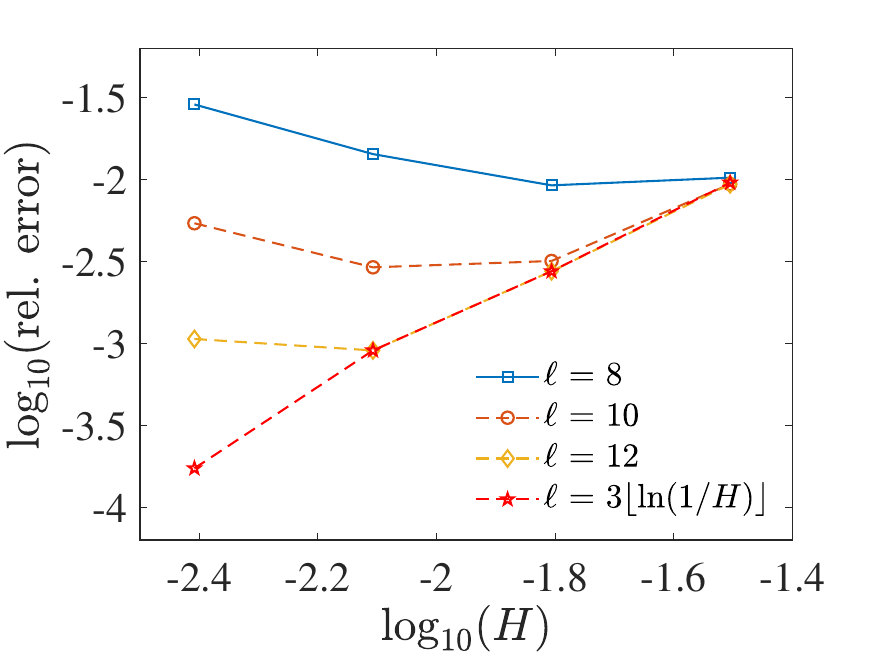}
    \caption{The convergence rates of the relative $V$-error with different oversampling sizes. Here the optimal convergence rates are $\mathcal{O}(H^{4})$ for 1D and $\mathcal{O}(H^{2})$, respectively.}
    \label{fig:localized-convergence-rate}
  \end{figure}

\end{example}

\subsection{Random problems}
In this part, we test the convergence rate of the qMC method and the truncation error. Over the domain $D = [0, 1]$, consider
\begin{equation*}
  n(x, \boldsymbol{\omega}_s) = n_0(x) + \delta\sum_{j = 1}^s \frac{\sin(j\pi x)}{1 + (j\pi)^q}\omega_j,
\end{equation*}
    where $\delta$ controls the strength of the randomness. Here we choose $n_0(x) = 1.0 + 5\sin(\pi x/2)$ and $\delta = 0.5$. And it is clear that for all $j > 0$,
\begin{equation*}
    \|\psi_j(x)\|_{\infty} = \frac{1}{1 + (j\pi)^q} < \frac{1}{(j\pi)^q}.
\end{equation*}
Hence we have $\sum_{j = 1}^{\infty} \|\psi_j\|_{\infty} < \zeta(q) / \pi^q$. In turn, we can take $p \in (1/q, 1)$ described in \Cref{assump:for-KL-series} and obtain the convergence rate with respect to the truncation dimension $s$ as declared in \Cref{thm:approximation-error-truncation}.  More discussion about this can be found in \cite{gilbert2019analysis}.

We now let $q = 2, 3$ and fix $\kappa = 2^6$, the fine mesh $h = 2^{-14}$, the coarse mesh $H = 2^{-9}$ and the oversampling size $\ell = -3\lfloor \log(H) \rfloor$. Meanwhile, we vary the truncation dimensions $s = 16, 32, \cdots, 256$; and the number of Sobol sequences $N = 500, 800, 1000, 4000, 8000$.

Since the 1D problem with the sufficiently small $h$ is considered here, we choose $s = 64$ and $N = 50000$ to calculate the reference solution to test the convergence rate of the qMC method. The almost first-order convergence rate can be observed as in \Cref{fig:convergence-random-field-1D-qMC}. Meanwhile, to test the convergence rate with respect to $s$, we set $s = 512$ and choose 8000 qMC samples to calculate a reference solution, and the corresponding results are shown in \Cref{fig:convergence-random-field-1D-truncation}. These numerical experiments confirm the error analysis of \Cref{thm:main-result}.
\begin{figure}[htbp]
    \centering
    \subfloat[QMC error w.r.t. $N$.]{\includegraphics[width=0.45\linewidth]{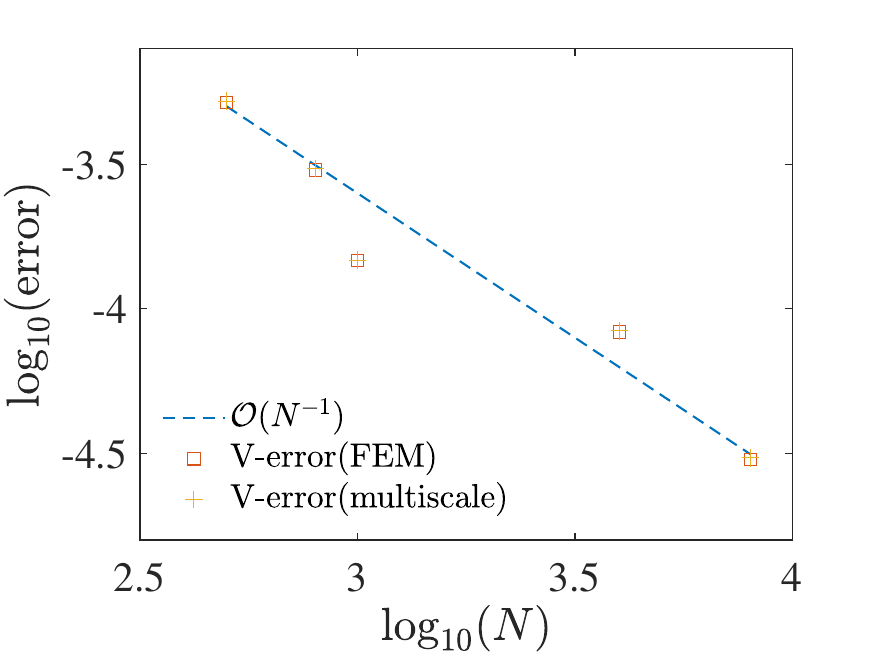}\label{fig:convergence-random-field-1D-qMC}}
    \subfloat[Truncation error w.r.t. $s$.]{\includegraphics[width=0.45\linewidth]{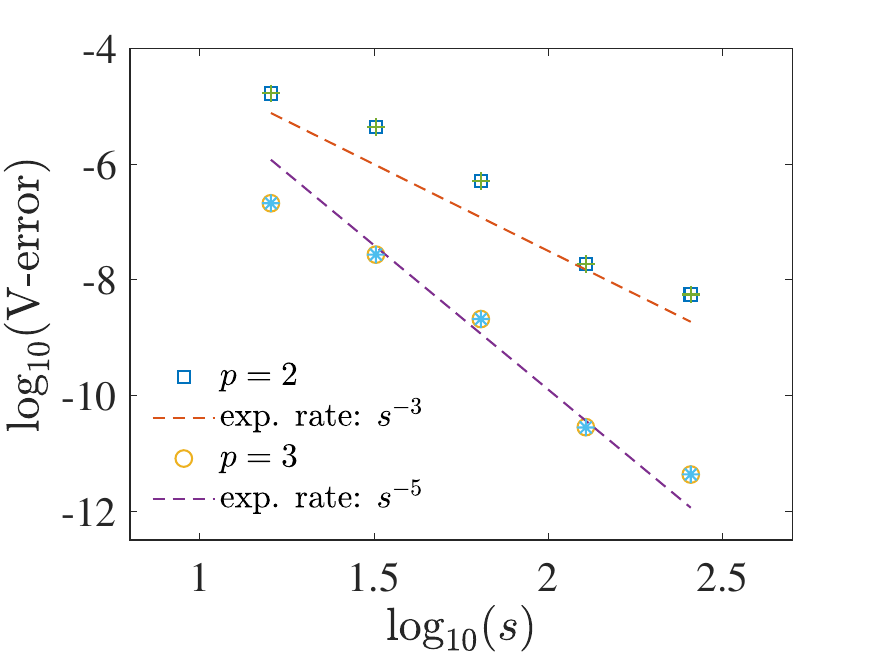}\label{fig:convergence-random-field-1D-truncation}}
    \caption{The numerical convergence rates with respect to the qMC sample size $N$ and the truncation dimension $s$. Note that the FEM solution are also depicted for comparison, and in \Cref{fig:convergence-random-field-1D-truncation}, the corresponding results are labeled squares and circles.}
    \label{fig:convergence-random-field-1D}
\end{figure}

\section{Conclusions}
\label{sec:conclusions}
In this paper, we develop an accurate numerical approach for simulating the Helmholtz problem in random media, where the randomness is represented by a truncated series with components parameterized by i.i.d. stochastic variables. As the truncation dimension increases, such problems typically suffer from mesh-dependent conditions that can significantly limit the computable problems. To alleviate the computational burden and eliminate the pollution effect inherent to the Helmholtz problem, we introduce a multiscale method incorporating a boundary corrector for cases involving Robin boundary conditions. By further employing the quasi-Monte Carlo sampling method, our approach demonstrates weak dependence on the wave number and achieves high convergence rates in both physical and stochastic spaces. Rigorous theoretical analysis and comprehensive numerical experiments are conducted to validate and illustrate the effectiveness and main features of the proposed method.

\section*{Acknowledgments} The research of Z. Zhang is supported by the National Natural Science Foundation of China (projects 12171406 and 92470103), the Hong Kong RGC grant (projects 17307921 and 17304324), the Outstanding Young Researcher Award of HKU (2020-21), and seed funding from the HKU-TCL Joint Research Center for Artificial Intelligence. The simulations are performed using research computing facilities offered by Information Technology Services, University of Hong Kong.


\bibliographystyle{siamplain}
\bibliography{references}
\end{document}